\newcounter{doubleV}
\newcounter{XplusH}
\newcounter{YplusV}
\newcommand{\trace}[4]
    {\setcounter{doubleV}{#4}
     \addtocounter{doubleV}{#4}
     \setcounter{XplusH}{#1}
     \addtocounter{XplusH}{#3}
     \setcounter{YplusV}{#2}
     \addtocounter{YplusV}{#4}
     \put(#1,#2){\oval(\value{doubleV},\value{doubleV})[l]}
     \put(#1,\value{YplusV}){\line(1,0){#3}}
     \put(\value{XplusH},#2){\oval(\value{doubleV},\value{doubleV})[r]}
     }
\newdimen\cboxwidth
\newdimen\cboxheight
\newcommand{\rgbfbox}[8]{%
\cboxwidth=\unitlength
\cboxheight=\unitlength
\multiply \cboxwidth by #3
\multiply \cboxheight by #4
{\color[rgb]{#5,#6,#7}\put(#1,#2){\rule{\cboxwidth}{\cboxheight}}}
\put(#1,#2){\framebox(#3,#4){#8}}
\cboxwidth=\unitlength
\cboxheight=\unitlength
}
\newcommand{\redfbox}[5]{\rgbfbox{#1}{#2}{#3}{#4}{1}{.7}{.8}{#5}}
\newcommand{\greenfbox}[5]{\rgbfbox{#1}{#2}{#3}{#4}{0}{1}{0}{#5}}
\newcommand{\violetfbox}[5]{\rgbfbox{#1}{#2}{#3}{#4}{.8}{.8}{1}{#5}}
\newcommand{\bluefbox}[5]{\rgbfbox{#1}{#2}{#3}{#4}{0}{1}{1}{#5}}
\newcommand{\yellowfbox}[5]{\rgbfbox{#1}{#2}{#3}{#4}{1}{1}{0}{#5}}
\newdimen\cboxwidth
\newdimen\cboxheight
\newcommand{\rgbbox}[8]{%
\cboxwidth=\unitlength
\cboxheight=\unitlength
\multiply \cboxwidth by #3
\multiply \cboxheight by #4
{\color[rgb]{#5,#6,#7}\put(#1,#2){\rule{\cboxwidth}{\cboxheight}}}
\put(#1,#2){\makebox(#3,#4){#8}}
\cboxwidth=\unitlength
\cboxheight=\unitlength
}
\newcommand{\graybox}[5]{\rgbbox{#1}{#2}{#3}{#4}{.9}{.9}{.9}{#5}}
\newcommand{\lineseg}[4]{\qbezier(#1,#2)(#1,#2)(#3,#4)}
\newcommand{\idline}[3]{%
\put(#1,#2){\line(1,0){#3}}
}
\newcounter{XX}
\newcounter{YY}
\newcommand{\mul}[2]{%
\setcounter{XX}{#1}%
\setcounter{YY}{#2}%
\addtocounter{XX}{5}%
\addtocounter{YY}{5}%
\rgbfbox{#1}{#2}{5}{10}{1}{1}{0}{}%
{\color[rgb]{1,.8,.8}\put(\value{XX},\value{YY}){\circle*{10}}}%
\put(\value{XX},\value{YY}){\circle{10}}%
\rgbbox{#1}{#2}{5}{10}{1}{.8}{.8}{}%
}
\newcommand{\mulunit}[2]{%
\setcounter{XX}{#1}%
\setcounter{YY}{#2}%
\addtocounter{XX}{5}%
\addtocounter{YY}{5}%
{\color[rgb]{1,.8,.8}\put(\value{XX},\value{YY}){\circle*{10}}}%
\put(\value{XX},\value{YY}){\circle{10}}%
}
\newcommand{\comul}[2]{%
\setcounter{XX}{#1}%
\setcounter{YY}{#2}%
\addtocounter{XX}{5}%
\addtocounter{YY}{5}%
\rgbfbox{\value{XX}}{#2}{5}{10}{0}{.8}{.8}{}%
{\color[rgb]{0,.8,.8}\put(\value{XX},\value{YY}){\circle*{10}}}%
\put(\value{XX},\value{YY}){\circle{10}}%
\rgbbox{\value{XX}}{#2}{5}{10}{0}{.8}{.8}{}%
}
\newcommand{\comulunit}[2]{%
\setcounter{XX}{#1}%
\setcounter{YY}{#2}%
\addtocounter{XX}{5}%
\addtocounter{YY}{5}%
{\color[rgb]{0,.8,.8}\put(\value{XX},\value{YY}){\circle*{10}}}%
\put(\value{XX},\value{YY}){\circle{10}}%
}
\newcommand{\compactunit}[2]{%
\setcounter{XX}{#1}%
\setcounter{YY}{#2}%
\addtocounter{XX}{5}%
\addtocounter{YY}{5}%
\rgbfbox{\value{XX}}{#2}{5}{10}{.8}{.8}{.8}{}%
{\color[rgb]{.8,.8,.8}\put(\value{XX},\value{YY}){\circle*{10}}}%
\put(\value{XX},\value{YY}){\circle{10}}%
\rgbbox{\value{XX}}{#2}{5}{10}{.8}{.8}{.8}{\small$\!\!\cap$}%
}
\newcommand{\compactcounit}[2]{%
\setcounter{XX}{#1}%
\setcounter{YY}{#2}%
\addtocounter{XX}{5}%
\addtocounter{YY}{5}%
\rgbfbox{#1}{#2}{5}{10}{.8}{.8}{.8}{}%
{\color[rgb]{.8,.8,.8}\put(\value{XX},\value{YY}){\circle*{10}}}%
\put(\value{XX},\value{YY}){\circle{10}}%
\rgbbox{#1}{#2}{5}{10}{.8}{.8}{.8}{\small$~\cup$}%
}
\newcommand{\Hcolor}{\color[rgb]{0,0,1}}
\newtheorem{observation}{Remark}[section]
\newtheorem{lemma}[observation]{Lemma}  
\newtheorem{theorem}[observation]{Theorem}
\newtheorem{definition}[observation]{Definition}
\newtheorem{example}[observation]{Example}
\newtheorem{proposition}[observation]{Proposition} 
\newtheorem{corollary}[observation]{Corollary}
\begin{document}

\title{Traced Monads and Hopf Monads}
\date{}
\author{Masahito Hasegawa}
\email{hassei@kurims.kyoto-u.ac.jp}
\homepage{https://www.kurims.kyoto-u.ac.jp/~hassei/}
\orcid{0000-0003-3460-8615}
\thanks{Supported by JSPS KAKENHI Grant No. JP18K11165, 
JP21K11753 and JST ERATO Grant No. JPMJER1603.}
\affiliation{Research Institute for Mathematical Sciences, Kyoto University, Kyoto, Japan}
\author{Jean-Simon Pacaud Lemay}
\email{js.lemay@mq.edu.au}
\homepage{https://sites.google.com/view/jspl-personal-webpage/}
\orcid{0000-0003-4124-3722}
\thanks{This 
research was financially supported by a JSPS Postdoctoral Fellowship -- Short Term, Award \#: PE19765, and a JSPS Postdoctoral Fellowship, Award \#: P21746.}
\affiliation{School of Mathematical and Physical Sciences, Macquarie University, Sydney, New South Wales, Australia}
\maketitle

\begin{abstract}
 A traced monad is a monad on a traced symmetric monoidal category that lifts the traced symmetric monoidal structure to its Eilenberg-Moore category. A long-standing question has been to provide a characterization of traced monads without explicitly mentioning the Eilenberg-Moore category. On the other hand, a symmetric Hopf monad is a symmetric bimonad whose fusion operators are invertible. For compact closed categories, symmetric Hopf monads are precisely the kind of monads that lift the compact closed structure to their Eilenberg-Moore categories. Since compact closed categories and traced symmetric monoidal categories are closely related, it is a natural question to ask what is the relationship between Hopf monads and traced monads. In this paper, we introduce trace-coherent Hopf monads on traced monoidal categories, which can be characterized without mentioning the Eilenberg-Moore category. The main theorem of this paper is that a symmetric Hopf monad is a traced monad if and only if it is a trace-coherent Hopf monad. We provide many examples of trace-coherent Hopf monads, such as those induced by cocommutative Hopf algebras or any symmetric Hopf monad on a compact closed category. We also explain how for traced Cartesian monoidal categories, trace-coherent Hopf monads can be expressed using the Conway operator, while for traced coCartesian monoidal categories, any trace-coherent Hopf monad is an idempotent monad. We also provide separating examples of traced monads that are not Hopf monads, as well as symmetric Hopf monads that are not trace-coherent. 
\end{abstract}


\section{Introduction}

Traced monoidal categories (Definition \ref{TSMC}) were introduced by Joyal, Street and Verity
in the 1990s \cite{joyal_street_verity_1996}.
Briefly, they are balanced (or often symmetric) monoidal categories equipped with a trace operator. In mathematics, the most common interpretation of the trace operator is as a generalization of the classical notion of the trace of matrices in linear algebra. Surprisingly, the trace operator also has many interpretations in various other areas such as the braid closure in knot theory, (quantum) traces in representation theory of (quantum) algebras, and feedback operators in control theory. Traced monoidal categories are also an important tool for theoretical computer scientists working on semantics of programming languages. Key applications of traced monoidal categories in this area include semantics of recursive programs and iterative programs via the trace-fixpoint correspondence \cite{Hasegawa97recursionfrom,simpson2000complete}, models of cyclic data structures and networks \cite{hasegawa2012models,stefanescu2000network}, and the Geometry of Interaction \cite{haghverdi2000unique,abramsky2002geometry}.

Being a structure with many such applications, one substantial problem with traced monoidal 
categories is that there are only a few known general constructions of them. Often we want to construct a new traced monoidal category from an existing one so that the new category inherits the traced monoidal structure from the original category while enjoying new additional properties or structures. Perhaps the most well-known construction on traced monoidal categories is the $\mathsf{INT}$-construction \cite{joyal_street_verity_1996}, which turns a traced monoidal category into a ribbon or compact closed category (Section \ref{sec:compactclosed}). The $\mathsf{INT}$-construction has several important applications but tends not to preserve the structures or properties that the original traced monoidal category possessed, including most limits and colimits. Another fairly general construction for traced monoidal categories is using uniformity of trace \cite{hasegawa2004uniformity}, which generalizes the construction of admissible predicates and the Scott induction principle in domain theory. However, this construction crucially depends on the presence of useful uniformity, which seems very rare in most examples of traced monoidal categories outside of domain theory. There is also a construction via a lax monoidal functor on the category of sets, due to Selinger \cite{selinger1999categorical}, which is an instance of the direct image construction from enriched category theory
\cite{benabou1967introduction}, which is not specific to traced monoidal categories.

In this paper, we focus on the problem of lifting traced monoidal structures 
to the category of algebras of a monad, which is often called the Eilenberg-Moore category of a monad (Definition \ref{def:Talg}). A good solution to this lifting problem would give us a new class of useful constructions on traced monoidal categories. This type of question of lifting the structure or property of an underlying category to the category of algebras of a monad has a long history. Most of the existing 
work on lifting problems is largely divided into two different contexts: simply asking to give the 
structure on the category of algebras, or asking in addition that the canonical forgetful functor (or more generally the monadic functor) preserves the structure. The former approach, of not assuming preservation, is often more suitable for questions of lifting properties which are unique. However, a trace operator on a monoidal category is not unique in general. Therefore for the story of this paper, we take the latter approach of asking for the preservation of structure by the forgetful functor, so that the traced monoidal structure on the category of algebras of a monad is the intended one related to that of the underlying category. 

We define a traced monad (Definition \ref{def:tracedmonad}) on a traced (symmetric) monoidal category to be a monad which lifts the trace operator to its Eilenberg-Moore category, that is, the Eilenberg-Moore category is a traced (symmetric) monoidal category such that the forgetful functor preserves the traced (symmetric) monoidal structure. However, often when defining monads that lift certain structures or properties, it is desirable to characterize such monads without mentioning their Eilenberg-Moore categories. This is usually achieved by equipping a monad with extra structure, and then proving that this extra structure is equivalent to a solution to the lifting problem. Unfortunately, for traced monads, it is still an open question whether it is possible to characterize them without explicitly mentioning the Eilenberg-Moore category and asking for preservation of the traced monoidal structure. In this paper, we provide a partial solution to this problem via the notion of Hopf monads. 

It is well-known that to lift a monoidal structure to the category of algebras 
of a monad $T$ is the same as to give a comonoidal structure on the monad, that is, to assume
coherence morphisms $T(I) \to I$ and ${T(A\otimes B) \to T(A)\otimes T(B)}$ subject to compatibility axioms with the monoidal structure of the underlying category \cite{moerdijk2002monads}. Following the terminology used in \cite{bruguieres2011hopf, bruguieres2007hopf}, monads with a comonoidal structure will be called bimonads (Definition \ref{def:bimonad}). For lifting symmetric monoidal structure, one must in addition ask that the comonoidal structure is symmetric (and similarly for lifting braided or balanced monoidal structure). One of the most remarkable results in this context of lifting monoidal structure is the discovery of the notion of Hopf monads, originally introduced by Bruguières and Virelizier \cite{bruguieres2007hopf}, and later generalized by Bruguières, Lack, and Virelizier \cite{bruguieres2011hopf}. Briefly, a Hopf monad (Definition \ref{Hopfmonaddef}) is a bimonad $T$ such that the bimonad's fusion operators, which are canonical natural transformations of type ${T(A\otimes T(B)) \to T(A) \otimes T(B)}$ and $T(T(A)\otimes B) \to T(A) \otimes T(B)$ which all bimonads have, are natural isomorphisms. Hopf monads characterize precisely monads that lift monoidal closed structure and also autonomous structure to their Eilenberg-Moore categories. In particular, for compact closed categories, a symmetric bimonad lifts the compact closed structure if and only if it is a Hopf monad (Proposition \ref{EMcompact}).

This result of Hopf monads lifting compact closed structure is relevant to the question of lifting traced symmetric monoidal structure. Indeed, every compact closed category is a traced symmetric monoidal category (Proposition \ref{compacttrace}), and conversely, as mentioned above, any traced symmetric monoidal category arises as a full monoidal subcategory of a compact closed category via the $\mathsf{INT}$-construction. It is then natural to expect that monads which lift traced symmetric monoidal structure are closely related to Hopf monads. In this paper, we will show that this intuition is valid to some extent, by providing necessary and sufficient conditions on a symmetric Hopf monad to lift traced symmetric monoidal structure, that is, to be a traced monad. Thus, we define a trace-coherent Hopf monad (Definition \ref{tracecoherent}) to be a symmetric Hopf monad on a traced symmetric monoidal category such that the trace operator and the monad's endofunctor are compatible via the fusion operators. The main result of this paper is that a symmetric Hopf monad is a traced monad if and only if it is trace-coherent (Theorem \ref{mainthm}). Therefore, the Eilenberg-Moore category of a trace-coherent Hopf monad is a traced symmetric monoidal category such that the forgetful functor preserves the structure strictly. The main advantage of trace-coherent Hopf monads is that they are characterized without explicitly mentioning the Eilenberg-Moore category, as desired. We also show that every symmetric Hopf monad on a compact closed category is trace-coherent (Corollary \ref{tracecoherentcompactclosed}), and also how every symmetric Hopf monad induced by a cocommutative Hopf algebra is also trace-coherent (Proposition \ref{prop:hopfalgtrace}), and so the trace operator lifts to the category of modules of a cocommutative Hopf algebra (Corollary \ref{cor:tracemodules}). We also explain how for traced Cartesian monoidal categories, trace-coherent Hopf monads can be expressed using the Conway operator (Proposition \ref{prop:hopftracefix}), while for traced coCartesian monoidal categories, any trace-coherent Hopf monad is an idempotent monad (Corollary \ref{tracecoherentcocart}). At the same time, we also show that the situation is not as simple as one might expect, by giving examples of traced monads which are not Hopf, and also a symmetric Hopf monad that is not trace-coherent, and thus does not lift the trace (Section \ref{counterexamplesec}).  

It is worth mentioning that in this paper we focus on traced {\em symmetric} monoidal categories rather than general traced {\em balanced} (or braided) monoidal categories, for the following reasons. First, handling balanced monoidal structure makes the technical details considerably more complex than just working on symmetric monoidal structure, while the consideration of trace is mostly the same in the balanced and symmetric cases. So, while most of our results (including the main theorem) are actually valid in the balanced cases, we shall concentrate on the symmetric case to make the story reasonably simple and short. Second, most of the important examples of traced monoidal categories are symmetric ones, and so are all of our (counter)examples used in this paper. Moreover, the lifting problem for ribbon categories was already solved in terms of ribbon Hopf monads \cite{bruguieres2007hopf}. Hence we believe that presenting this work in the balanced case is not really a pressing issue. Nevertheless, we are sure that readers familiar with balanced monoidal categories should not have too much difficulty in generalizing our results in this paper to the balanced cases.

\section{Traced Symmetric Monoidal Categories}\label{sec:traced}

In this background section, we review traced \emph{symmetric} monoidal categories. Traced monoidal categories were introduced by Joyal, Street, and Verity \cite{joyal_street_verity_1996}, and are balanced monoidal categories equipped with a trace operator. In this paper, we will work with the symmetric version of traced monoidal categories, and so will provide the definition of traced symmetric monoidal categories as found in \cite{Hasegawa97recursionfrom}, except that we omit an axiom (Vanishing for unit) which is known to be redundant \cite[Appendix A]{hasegawa2009traced}. Alternate, but equivalent, axiomatizations of traced symmetric monoidal categories can be found in \cite[Section 3]{hasegawa2009traced} and \cite[Section 2.1]{hasegawa2008finite}. 

As the name suggests, the underlying category of a traced symmetric monoidal category is a symmetric monoidal category, which is a category equipped with a tensor product. For an in-depth introduction to (symmetric) monoidal categories, and their axioms written out in commutative diagrams, we refer the reader to \cite[Section 3]{selinger2010survey}.  

\begin{definition} \cite[Section 3.1, 3.3, \& 3.5]{selinger2010survey} A \textbf{monoidal category} is a sextuple $(\mathbb{X}, \otimes, I, \alpha, \lambda, \rho)$ consisting of a category $\mathbb{X}$, a functor ${\otimes: \mathbb{X} \times \mathbb{X} \to \mathbb{X}}$ called the \textbf{monoidal product}, an object $I$ called the \textbf{monoidal unit}, a natural isomorphism $\alpha_{A,B,C}: A \otimes (B \otimes C) \xrightarrow{\cong} (A \otimes B) \otimes C$ called the associativity isomorphism, a natural isomorphism $\lambda_A: I \otimes A \xrightarrow{\cong}  A$ called the left unit isomorphism, and a natural isomorphism $\rho_A: A \otimes I \xrightarrow{\cong}  A $ called the right unit isomorphism, such that the following equalities hold: 
\begin{equation}\label{monoidalaxiom}\begin{gathered}
\alpha_{A \otimes B, C, D} \circ \alpha_{A,B,C \otimes D} = (\alpha_{A,B,C} \otimes 1_D) \circ \alpha_{A,B \otimes C, D} \circ (1_A \otimes \alpha_{B,C,D}) \\
(1_A \otimes \lambda_B) \circ \alpha_{A,I,B} = \rho_A \otimes 1_B  
\end{gathered}\end  {equation}
A \textbf{symmetric monoidal category} is a septuple $(\mathbb{X}, \otimes, I, \alpha, \lambda, \rho, \sigma)$ consisting of a monoidal category $(\mathbb{X}, \otimes, I, \alpha, \lambda, \rho)$ and a natural isomorphism $\sigma_{A,B}: A \otimes B \xrightarrow{\cong}  B \otimes A$ called the symmetry isomorphism, such that the following equalities hold:
\begin{equation}\label{symmonoidalaxiom}\begin{gathered}
\sigma_{B,A} \circ \sigma_{A,B} = 1_{A \otimes B} \\
(1_B \circ \sigma_{A,C}) \circ \alpha_{B,A,C} \circ (\sigma_{A,B} \otimes 1_C) = \alpha_{B,C,A} \circ \sigma_{A,B \otimes C} \circ \alpha_{A,B,C} 
\end{gathered}\end  {equation}
\end{definition}

A traced symmetric monoidal category is a symmetric monoidal category that comes equipped with a trace operator. The axioms of the trace operator are best understood using the graphical calculus for (symmetric) monoidal categories, which we will also make use of in Section \ref{sec:hopfcase}. We will not review in full the graphical calculus here, and we refer the reader to \cite{selinger2010survey} for an in-depth introduction. We use more-or-less the same conventions as found in \cite{selinger2010survey}, so, in particular, our string diagrams should compositionally be read horizontally from left to right, and with the wires read from bottom to top. Furthermore, while equationally we work in the non-strict setting (to provide a complete general story), following the standard convention, graphically we allow ourselves to work in a strict setting and so omit the associativity and unit isomorphisms in our string diagrams -- which is justified by the coherence theorem for symmetric monoidal categories. 

\begin{definition} \label{TSMC} \cite[Definition 2.1]{Hasegawa97recursionfrom}  A \textbf{traced symmetric monoidal category} is an octuple $(\mathbb{X}, \otimes, I, \alpha, \lambda, \rho, \sigma, \mathsf{Tr})$ consisting of a symmetric monoidal category $(\mathbb{X}, \otimes, I, \alpha, \lambda, \rho, \sigma)$ equipped with a \textbf{trace operator} $\mathsf{Tr}$, which is a family of operators (indexed by triples of objects $X, A, B \in \mathbb{X}$),  $\mathsf{Tr}^X_{A,B}: \mathbb{X}(A \otimes X, B \otimes X) \to \mathbb{X}(A,B)$, which is drawn in the graphical calculus as follows: 
\begin{center}
\unitlength=.7pt
\begin{picture}(200,120)
\thicklines
\graybox{0}{0}{200}{100}{}%
\lineseg{0}{50}{200}{50}
\lineseg{0}{30}{200}{30}
\violetfbox{50}{10}{100}{60}{$f$}%
\put(100,110){\makebox(0,0){$f:A\otimes X\to B\otimes X$}}
\put(10,57){\makebox(0,0){$X$}}
\put(10,37){\makebox(0,0){$A$}}
\put(190,57){\makebox(0,0){$X$}}
\put(190,37){\makebox(0,0){$B$}}
\end{picture}
\begin{picture}(40,120)
\put(20,40){\makebox(0,0){$\mapsto$}}
\end{picture}
\begin{picture}(200,120)
\thicklines
\graybox{0}{0}{200}{100}{}%
\trace{50}{70}{100}{20}
\lineseg{0}{30}{200}{30}
\violetfbox{50}{10}{100}{60}{$f$}%
\put(100,110){\makebox(0,0){$\mathsf{Tr}^X_{A,B}(f):A\to B$}}
\put(10,37){\makebox(0,0){$A$}}
\put(190,37){\makebox(0,0){$B$}}
\end{picture}
\end{center}
such that the following axioms are satisfied: 
\begin{description}
\item[\textbf{[Tightening]:}] For every map $f: A \otimes X \to B \otimes X$ and $g: A^\prime \to A$, the following equality holds: 
 \begin{equation}\label{tightening1}\begin{gathered}\mathsf{Tr}^X_{A^\prime,B}\left( f \circ (g \otimes 1_X)  \right) = \mathsf{Tr}^X_{A,B}(f) \circ g \\
\unitlength=.7pt
\begin{picture}(200,100)
\thicklines
\graybox{0}{0}{200}{100}{}%
\lineseg{0}{25}{200}{25}
\trace{35}{70}{130}{15}
\lineseg{35}{55}{165}{55}
\greenfbox{40}{10}{40}{30}{$g$}%
\violetfbox{100}{10}{60}{60}{$f$}%
\put(35,5){\dashbox(130,70){}}
\end{picture}
\begin{picture}(40,100)
\put(20,50){\makebox(0,0){$=$}}
\end{picture}
\begin{picture}(200,100)
\thicklines
\graybox{0}{0}{200}{100}{}%
\lineseg{0}{25}{200}{25}
\trace{100}{70}{60}{15}
\greenfbox{40}{10}{40}{30}{$g$}%
\violetfbox{100}{10}{60}{60}{$f$}%
\end{picture}
   \end{gathered}\end  {equation}
and for every map $f: A \otimes X \to B \otimes X$ and $h: B \to B^\prime$, the following equality holds: 
 \begin{equation}\label{tightening2}\begin{gathered} \mathsf{Tr}^X_{A,B^\prime}\left( (h \otimes 1_X) \circ f  \right) = h \circ \mathsf{Tr}^X_{A,B}(f) \\
 \unitlength=.7pt
\begin{picture}(200,100)
\thicklines
\graybox{0}{0}{200}{100}{}%
\lineseg{0}{25}{200}{25}
\trace{35}{70}{130}{15}
\lineseg{35}{55}{165}{55}
\yellowfbox{120}{10}{40}{30}{$h$}%
\violetfbox{40}{10}{60}{60}{$f$}%
\put(35,5){\dashbox(130,70){}}
\end{picture}
\begin{picture}(40,100)
\put(20,50){\makebox(0,0){$=$}}
\end{picture}
\begin{picture}(200,100)
\thicklines
\graybox{0}{0}{200}{100}{}%
\lineseg{0}{25}{200}{25}
\trace{40}{70}{60}{15}
\yellowfbox{120}{10}{40}{30}{$h$}%
\violetfbox{40}{10}{60}{60}{$f$}%
\end{picture}
   \end{gathered}\end  {equation}
\item[\textbf{[Sliding]:}] For every map $f: A \otimes X \to B \otimes X^\prime$ and $k: X^\prime \to X$, the following equality holds: 
 \begin{equation}\label{sliding}\begin{gathered} \mathsf{Tr}^X_{A,B}\left(f \circ (1_A \otimes k) \right) = \mathsf{Tr}^{X^\prime}_{A,B}\left((1_B \otimes k)  \circ f \right) \\
 \unitlength=.7pt
\begin{picture}(200,100)
\thicklines
\graybox{0}{0}{200}{100}{}%
\lineseg{0}{25}{200}{25}
\trace{35}{70}{130}{15}
\lineseg{35}{55}{165}{55}
\redfbox{40}{40}{40}{30}{$k$}%
\violetfbox{100}{10}{60}{60}{$f$}%
\put(35,5){\dashbox(130,70){}}
\end{picture}
\begin{picture}(40,100)
\put(20,50){\makebox(0,0){$=$}}
\end{picture}
\begin{picture}(200,100)
\thicklines
\graybox{0}{0}{200}{100}{}%
\lineseg{0}{25}{200}{25}
\trace{35}{70}{130}{15}
\lineseg{35}{55}{165}{55}
\redfbox{120}{40}{40}{30}{$k$}%
\violetfbox{40}{10}{60}{60}{$f$}%
\put(35,5){\dashbox(130,70){}}
\end{picture}
 \end{gathered}\end  {equation}
\item[\textbf{[Vanishing]:}] For every map $f: A \otimes (X \otimes Y) \to B \otimes (X \otimes Y)$, the following equality holds: 
 \begin{equation}\label{vanishing1}\begin{gathered}  \mathsf{Tr}^{X \otimes Y}_{A,B}(f) =     \mathsf{Tr}^X_{A,B} \left( \mathsf{Tr}^Y_{A \otimes X,B \otimes X}\left( \alpha_{B,X,Y} \circ f \circ \alpha^{-1}_{A,X,Y} \right) \right) \\
\unitlength=.7pt
\begin{picture}(200,100)
\thicklines
\graybox{0}{0}{200}{100}{}%
\lineseg{0}{20}{200}{20}
\trace{60}{70}{80}{20}
\put(70,50){\circle{20}}
\put(130,50){\circle{20}}
\violetfbox{70}{10}{60}{60}{$f$}%
\end{picture}
\begin{picture}(40,100)
\put(20,50){\makebox(0,0){$=$}}
\end{picture}
\begin{picture}(200,100)
\thicklines
\graybox{0}{0}{200}{100}{}%
\lineseg{0}{20}{200}{20}
\trace{70}{70}{60}{10}
\trace{55}{65}{90}{25}
\lineseg{55}{40}{145}{40}
\violetfbox{70}{10}{60}{60}{$f$}%
\put(55,5){\dashbox(90,80){}}
\end{picture}
  \end{gathered}\end  {equation}
\item[\textbf{[Superposing]:}] For every map $f: A \otimes X \to B \otimes X$ the following equality holds:
 \begin{equation}\label{superposing}\begin{gathered} \mathsf{Tr}^X_{C \otimes A,C \otimes B}\left( \alpha_{C,A,X}  \circ (1_C \otimes f) \circ  \alpha^{-1}_{C,A,X} \right) = 1_C \otimes  \mathsf{Tr}^X_{A,B}(f) \\
   \unitlength=.7pt
\begin{picture}(200,100)
\thicklines
\graybox{0}{0}{200}{100}{}%
\lineseg{0}{15}{200}{15}
\lineseg{0}{40}{200}{40}
\trace{65}{70}{70}{15}
\lineseg{65}{55}{135}{55}
\violetfbox{70}{30}{60}{40}{$f$}%
\put(65,5){\dashbox(70,70){}}
\end{picture}
\begin{picture}(40,100)
\put(20,50){\makebox(0,0){$=$}}
\end{picture}
\begin{picture}(200,100)
\thicklines
\graybox{0}{0}{200}{100}{}%
\lineseg{0}{15}{200}{15}
\lineseg{0}{40}{200}{40}
\trace{70}{70}{60}{15}
\violetfbox{70}{30}{60}{40}{$f$}%
\end{picture}
  \end{gathered}\end{equation}
\item[\textbf{[Yanking]:}] For every object $X$, the following equality holds: 
\begin{equation}\label{yanking}\begin{gathered} \mathsf{Tr}^X_{X,X}( \sigma_{X,X} ) = 1_X  \\
\unitlength=.7pt
\begin{picture}(200,100)
\thicklines
\graybox{0}{0}{200}{100}{}%
\lineseg{0}{25}{70}{25}
\lineseg{130}{25}{200}{25}
\trace{65}{70}{70}{15}
\lineseg{65}{55}{70}{55}
\lineseg{130}{55}{135}{55}
\lineseg{70}{25}{130}{55}
\lineseg{70}{55}{130}{25}
\put(65,10){\dashbox(70,60){}}
\end{picture}
\begin{picture}(40,100)
\put(20,50){\makebox(0,0){$=$}}
\end{picture}
\begin{picture}(200,100)
\thicklines
\graybox{0}{0}{200}{100}{}%
\lineseg{0}{25}{200}{25}
\end{picture}
  \end{gathered}\end  {equation}
  \end{description}
For a map $f: A \otimes X \to B \otimes X$, the map $\mathsf{Tr}^X_{A,B}(f): A \to B$ is called the \textbf{trace} of $f$. 
\end{definition}

There are numerous interesting examples of traced symmetric monoidal categories in the literature, see \cite[Section 5]{abramsky2002geometry} for a good list of examples. In this paper, we will discuss compact closed categories (Section \ref{sec:compactclosed}), whose trace operator captures the trace of matrices from linear algebra, traced Cartesian monoidal categories (Section \ref{carttracesec}), where the trace operator is given by fixed points, and traced coCartesian monoidal categories (Section \ref{coCarttracemonadsec}), where the trace operator is given by iterations. We also consider some concrete examples of traced symmetric monoidal categories in Section \ref{counterexamplesec}. 

\section{Traced Monads}\label{tracedmonadsec}

In this section, we introduce the notion of \emph{traced monads}, which are precisely the sort of monads on traced symmetric monoidal categories that lift the traced monoidal structure. In order to properly define traced monads, we will also quickly review the intermediate notions of monads and bimonads, as well as their algebras and Eilenberg-Moore categories. We refer the reader to \cite[Chapter 4]{borceux1994handbook} for a detailed introduction to monads. 

\begin{definition} \cite[Definition 4.1.1]{borceux1994handbook} A \textbf{monad} on a category $\mathbb{X}$ is a triple $(T, \mu, \eta)$ consisting of a functor ${T: \mathbb{X} \to \mathbb{X}}$, a natural transformation $\mu_A: TT(A) \to T(A)$, called the monad multiplication, and a natural transformation ${\eta_A: A \to T(A)}$, called the monad unit, such that the following equalities hold: 
\begin{align}\label{monadeq}
\mu_A \circ T(\eta_A) = 1_{T(A)} = \mu_A \circ \eta_{T(A)} && \mu_A \circ T(\mu_A) = \mu_A \circ \mu_{T(A)}   
\end{align}
\end{definition}

We now review algebras of a monad, which were called modules of a monad in \cite{bruguieres2011hopf,bruguieres2007hopf}. 

\begin{definition}\label{def:Talg} \cite[Definition 4.1.2]{borceux1994handbook} Let $(T, \mu, \eta)$ be a monad on a category $\mathbb{X}$. 
\begin{enumerate}[{\em (i)}]
\item A \textbf{$T$-algebra} is a pair $(A, a)$ consisting of an object $A$ and a map $a: T(A) \to A$ of $\mathbb{X}$, called the $T$-algebra structure, such that the following equalities hold: 
\begin{align}\label{Talg}
a \circ \eta_A = 1_A && a \circ \mu_A = a \circ T(a)     
\end{align}
\item For each object $A$, the \textbf{free $T$-algebra} over $A$ is the $T$-algebra $(T(A), \mu_A)$. 
\item If $(A,a)$ and $(B,b)$ are $T$-algebras, then a \textbf{$T$-algebra morphism} $f: (A, a)\to (B, b)$ is a map in $\mathbb{X}$ between the underlying objects $f: A\to B$ such that the following equality holds: 
\begin{align}\label{Talgmap}
f \circ a = b \circ T(f)     
\end{align}
\end{enumerate}
The \textbf{Eilenberg-Moore category} of the monad $(T, \mu, \eta)$ is the category $\mathbb{X}^T$ whose objects are $T$-algebras and whose maps are $T$-algebra morphisms between them. Define the functor $U^{T}: \mathbb{X}^T \to \mathbb{X}$, called the \textbf{forgetful functor}, on objects as $U(A, a)= A$ and on maps as $U(f)=f$.
\end{definition}

The following well-known lemma about algebra morphisms whose domain is a free algebra will be particularly important for the proof of the main result of this paper (Theorem \ref{mainthm}): 

\begin{lemma} \label{Talglemma4} Let $(T, \mu, \eta)$ be a monad on a category $\mathbb{X}$, and let $(B, b)$ be a $T$-algebra, and let ${f: (T(A), \mu_A) \to (B,b)}$ and $g: (T(A), \mu_A) \to (B,b)$ be $T$-algebra morphisms. Then $f=g$ if and only if $f \circ \eta_A = g \circ \eta_A$. 
\end{lemma}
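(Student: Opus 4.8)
The plan is to prove the equivalence directly, with the nontrivial direction being the "if" part. The "only if" direction is immediate: if $f = g$, then precomposing with $\eta_A$ gives $f \circ \eta_A = g \circ \eta_A$. So the substance is to assume $f \circ \eta_A = g \circ \eta_A$ and deduce $f = g$ as maps $T(A) \to B$ in $\mathbb{X}$.

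For the "if" direction, I would start from the $T$-algebra morphism equations for $f$ and $g$, namely $f \circ \mu_A = b \circ T(f)$ and $g \circ \mu_A = b \circ T(g)$ (these are instances of \eqref{Talgmap} with the free algebra $(T(A),\mu_A)$ as domain). The idea is to exhibit $f$ as a composite that factors through $\eta$. Concretely, using the monad unit law $\mu_A \circ T(\eta_A) = 1_{T(A)}$ from \eqref{monadeq}, I can write $f = f \circ \mu_A \circ T(\eta_A)$. Now apply the algebra morphism identity to the leftmost $f \circ \mu_A$, obtaining $f = b \circ T(f) \circ T(\eta_A) = b \circ T(f \circ \eta_A)$ by functoriality of $T$. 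The same computation applied to $g$ gives $g = b \circ T(g \circ \eta_A)$. Since $f \circ \eta_A = g \circ \eta_A$ by hypothesis, we get $T(f \circ \eta_A) = T(g \circ \eta_A)$, hence $f = b \circ T(f \circ \eta_A) = b \circ T(g \circ \eta_A) = g$, completing the proof.

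There is no real obstacle here — the proof is a short diagram chase using only the monad axioms, the algebra morphism condition, and functoriality of $T$; the only thing to be careful about is keeping track of which $\eta$ and $\mu$ are indexed at which object (all at $A$ here since the domain is the free algebra on $A$). If a graphical or more conceptual rephrasing were wanted, one could note that $\eta_A \colon A \to T(A)$ is, by the universal property of the free algebra, the unit of the adjunction $F^T \dashv U^T$, so that $T$-algebra morphisms out of $F^T(A) = (T(A),\mu_A)$ correspond bijectively via precomposition with $\eta_A$ to maps $A \to U^T(B,b) = B$ in $\mathbb{X}$; the claimed statement is exactly the injectivity half of this bijection. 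I would likely present the explicit computation rather than invoke the adjunction, since it is self-contained and is the form that gets reused in Theorem \ref{mainthm}.
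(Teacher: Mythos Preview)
Your proof is correct and is the standard argument. The paper actually does not supply a proof of this lemma at all, treating it as well known; your explicit computation (or equivalently the adjunction remark you include) is exactly what one would write to justify it.
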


In order to lift traced symmetric monoidal structure, one must first be able to lift symmetric monoidal structure. As such, we now review the notion of a (symmetric) bimonad, which is precisely the kind of monad on a (symmetric) monoidal category which lifts the (symmetric) monoidal structure to its Eilenberg-Moore category. Bimonads were first introduced by Moerdijk \cite{moerdijk2002monads} under the name Hopf monads. However, Brugui{\`e}res and Virelizier \cite{bruguieres2007hopf} renamed Moerdijk's Hopf monads to bimonads since every bimonoid induces a bimonad. Note that bimonads are also sometimes called opmonoidal monads \cite{mccrudden2002opmonoidal} or comonoidal monads \cite{hasegawa2018linear}. A bimonad is defined as a monad on a monoidal category that comes equipped with extra structure that is compatible with the monoidal structure. The symmetric version of bimonads, called symmetric bimonads, also requires compatibility with the symmetry. 

\begin{definition} \label{def:bimonad} \cite[Definition 1.1]{moerdijk2002monads} A \textbf{bimonad} on a monoidal category $(\mathbb{X}, \!\otimes, I, \alpha, \lambda, \rho)$ is a quintuple $(T, \mu, \eta, m, m_{I})$ consisting of a monad $(T, \mu, \eta)$ equipped with a natural transformation $m_{A,B}: T(A \otimes B) \to T(A) \otimes T(B)$, and a map ${m_I: T(I) \to I}$ such that:
\begin{enumerate}[{\em (i)}]
\item $(T, m, m_I)$ is a \textbf{comonoidal endofunctor}, that is, the following equalities hold: 
\begin{equation}\label{smcendo}\begin{gathered} 
\alpha_{T(A), T(B), T(C)} \circ (1_{T(A)} \otimes m_{B,C}) \circ m_{A, B \otimes C} = m_{A,B} \otimes 1_{T(C)} \circ m_{A \otimes B, C} \circ T(\alpha_{A,B,C}) \\
\lambda_{T(A)} \circ (m_I \otimes 1_{T(A)}) \circ m_{I,A} \circ T(\lambda^{-1}_A) = 1_{T(A)} = \rho_{T(A)} \circ (1_{T(A)} \otimes m_I) \circ m_{A,I} \circ T(\rho^{-1}_A)
\end{gathered}\end  {equation}
 \item $\mu$ and $\eta$ are \textbf{comonoidal transformations}, that is, the following equalities hold: 
  \begin{equation}\label{symbimonad}\begin{gathered}
  m_{A,B} \circ \mu_{A  \otimes B} = (\mu_A \otimes \mu_B) \circ m_{T(A),T(B)} \circ T(m_{A,B}) \quad \quad \quad \quad m_I \circ \mu_I = m_I \circ T(m_I) \\ 
  m_{A,B} \circ \eta_{A \otimes B} = \eta_A \otimes \eta_B \quad \quad \quad \quad m_I \circ \eta_I = 1_I
  \end{gathered}\end{equation}
\end{enumerate}
\end{definition}

\begin{definition} \cite[Section 3]{moerdijk2002monads} A \textbf{symmetric bimonad} on a symmetric monoidal category \\
$(\mathbb{X}, \!\otimes, \!I, \alpha, \lambda, \rho, \sigma)$ is a bimonad $(T, \mu, \eta, m, m_{I})$ on the underlying monoidal category $(\mathbb{X}, \!\otimes, \!I, \alpha, \lambda, \rho)$ such that $(T, m, m_I)$ is also a \textbf{symmetric comonoidal endofunctor}, that is, the following equality holds:
 \begin{equation}\label{smcendo2}\begin{gathered} 
 m_{B,A} \circ T(\sigma_{A,B}) = \sigma_{T(A), T(B)} \circ m_{A,B}
 \end{gathered}\end  {equation}
\end{definition}

An important class of examples of (symmetric) bimonads are those induced by (cocommutative) bimonoids, which we review in Section \ref{sec:hopfcase}. In Section \ref{carttracesec}, we will see how any monad on a Cartesian monoidal category is a symmetric bimonad. Other explicit examples of symmetric bimonads can be found in Section \ref{counterexamplesec}. 

As first observed by Moerdijk \cite[Theorem 7.1]{moerdijk2002monads}, the Eilenberg-Moore category of a bimonad is a monoidal category such that the forgetful functor preserves the monoidal structure strictly (which we discuss a special case in Proposition \ref{EMTSMC} below). Similarly, the Eilenberg-Moore category of a symmetric bimonad is a symmetric monoidal category such that the forgetful functor preserves the symmetric monoidal structure strictly, that is, a symmetric bimonad lifts symmetric monoidal structure \cite[Proposition 3.2]{moerdijk2002monads}. In fact, for a monad on a (symmetric) monoidal category, (symmetric) comonoidal structures on the monad are in bijective correspondence with (symmetric) monoidal structures on the Eilenberg-Moore category which are strictly preserved by the forgetful functor. Thus a monad on a (symmetric) monoidal category is a (symmetric) bimonad if and only if the (symmetric) monoidal structure lifts to its Eilenberg-Moore category. For the purposes of this paper, let us explicitly review how the Eilenberg-Moore category of a symmetric bimonad is a symmetric monoidal category. 

\begin{proposition} \label{EMMCprop}\cite[Proposition 3.2]{moerdijk2002monads} Let $(T, \mu, \eta, m, m_I)$ be a symmetric bimonad of a symmetric monoidal category $(\mathbb{X}, \!\otimes, I, \alpha, \!\lambda, \rho, \sigma)$. Then $\left(\mathbb{X}^T, \otimes^T, (I, m_I), \alpha^T, \lambda^T, \rho^T, \sigma^T \right)$ is a symmetric monoidal category where:  
\begin{enumerate}[{\em (i)}]
\item The tensor product $\otimes^T$ of a pair of $T$-algebras $(A, a)$ and $(B, b)$, is the $T$-algebra defined as:
\[(A, a) \otimes^T (B, b) := (A \otimes B, (a \otimes b) \circ m_{A,B})\] 
while the tensor product of $T$-algebra morphisms $f$ and $g$ is defined as $f \otimes^{T} g := f \otimes g$;
\item The unit is the pair $(I, m_I)$;
\item The four natural isomorphisms:
\begin{gather*}
\alpha^T_{(A,a), (B, b), (C. c)}: (A,a) \otimes^T \left( (B,b) \otimes^T (C,c) \right) \xrightarrow{\cong} \left( (A,a) \otimes^T (B,b) \right) \otimes^T (C,c)\\
\lambda^T_{(A,a)}: (I,m_I) \otimes^T (A,a) \xrightarrow{\cong} (A,a) \\
\rho^T_{(A,a)}: (A,a) \otimes^T  (I,m_I) \xrightarrow{\cong} (A,a) \\
\sigma^T_{(A,a), (B, b)}: (A,a) \otimes^T (B,b) \xrightarrow{\cong} (B,b) \otimes^T (A,a)
\end{gather*}
and their inverses are respectively defined as follows:
\begin{align*}
\alpha^T_{(A,a), (B, b), (C. c)} = \alpha_{A,B,C} &&  \lambda^T_{(A,a)} = \lambda_A && \rho^T_{(A,a)} = \rho_A && \sigma^T_{(A,a), (B, b)} = \sigma_{A,B} \\
{\alpha^T}^{-1}_{(A,a), (B, b), (C. c)} = {\alpha}^{-1}_{A,B,C} &&  {\lambda^T}^{-1}_{(A,a)} = {\lambda}^{-1}_A && {\rho^T}^{-1}_{(A,a)} = {\rho}^{-1}_A && {\sigma^T}^{-1}_{(A,a), (B, b)} = {\sigma}^{-1}_{A,B} 
\end{align*}
\end{enumerate}
Furthermore the forgetful functor $U^T: \left(\mathbb{X}^T, \otimes^T, (I, m_I), \alpha^T, \lambda^T, \rho^T  \right) \to (\mathbb{X}, \otimes, I, \alpha, \lambda, \rho)$ is a strict symmetric monoidal functor, that is, the following equalities hold: 
\begin{gather*}
U^T(- \otimes^T -) =U^T(-) \otimes U^T(-) \quad \quad \quad U^T(I,m_I) = I \quad \quad \quad U^T( \alpha^T_{-,-,-}) = \alpha_{U^T(-), U^T(-), U^T(-)} \\  U^T(\lambda^T_{-}) = \lambda_{U^T(-)} \quad \quad \quad U^T(\rho^T_{-}) = \rho_{U^T(-)} \quad \quad \quad U^T( \sigma^T_{-,-}) = \sigma_{U^T(-), U^T(-)} 
\end{gather*}
\end{proposition}

We are now in a position to properly define what a traced monad is. In short, a traced monad is a symmetric bimonad such that the trace of an algebra morphism is again an algebra morphism.

\begin{definition} \label{def:tracedmonad} A \textbf{traced monad} on a traced symmetric monoidal category $(\mathbb{X}, \otimes, I, \alpha, \lambda, \rho, \sigma, \mathsf{Tr})$ is a symmetric bimonad $(T, \mu, \eta, m, m_I)$ on the underlying symmetric monoidal category \\ $(\mathbb{X}, \otimes, I, \alpha, \lambda, \rho, \sigma)$ such that for every triple of $T$-algebras $(X,x)$, $(A,a)$, and $(B,b)$, if:
\[f: (A,a) \otimes^T (X,x) \to (B,b) \otimes^T (X,x)\]
is a $T$-algebra morphism, then $\mathsf{Tr}^X_{A,B}(f): (A,a) \to (B,b)$ is a $T$-algebra morphism. Explicitly, if the equality on the left holds, then the equality on the right holds: 
\begin{equation}\label{tracedmonad}\begin{gathered}
f \circ (a \otimes x) \circ m_{A,X} = (b \otimes x) \circ m_{B,X} \circ T(f) \Longrightarrow  \mathsf{Tr}^X_{A,B}(f) \circ a = b \circ T\left( \mathsf{Tr}^X_{A,B}(f) \right)
\end{gathered}\end  {equation}
\end{definition}

Examples of traced monads can be found below throughout the remainder of this paper, with some concrete examples in Section \ref{counterexamplesec}. We now explicitly state how the Eilenberg-Moore category of a traced monad is a traced symmetric monoidal category, such that the forgetful functor preserves the traced symmetric monoidal structure strictly. 

\begin{proposition}\label{EMTSMC} Let $(T, \mu, \eta, m, m_I)$ be a traced monad on a traced symmetric monoidal category $(\mathbb{X}, \!\otimes, I, \alpha, \!\lambda, \rho, \sigma, \!\mathsf{Tr})$. Then $\left(\mathbb{X}^T, \otimes^T, (I, m_I), \alpha^T, \lambda^T, \rho^T, \sigma^T, \mathsf{Tr}^T \right)$ is a traced symmetric monoidal category where the underlying symmetric monoidal $\left(\mathbb{X}^T, \otimes^T, (I, m_I), \alpha^T, \lambda^T, \rho^T, \sigma^T \right)$ is defined as in Proposition \ref{EMMCprop}, and the trace operator $\mathsf{Tr}^T$ is defined as follows for a $T$-algebra morphism $f: (A,a) \otimes^T (X,x) \to (B,b) \otimes^T (X,x)$: 
\[ {\mathsf{Tr}^T}^{(X,x)}_{(A,a), (B,b)}(f) = \mathsf{Tr}^X_{A,B}(f) : (A,a) \to (B,b) \]
Furthermore the forgetful functor $U^T: \left(\mathbb{X}^T, \otimes^T, (I, m_I), \alpha^T, \lambda^T, \rho^T, \mathsf{Tr}^T  \right) \to (\mathbb{X}, \otimes, I, \alpha, \lambda, \rho, \mathsf{Tr})$ a strict traced symmetric monoidal functor, that is, the equalities in Proposition \ref{EMMCprop} hold and the following equality also holds: 
\begin{gather*}
U^T\left(\mathsf{Tr}^T(-)  \right) = \mathsf{Tr}\left( U^T (-) \right)
\end{gather*}
\end{proposition} 
\begin{proof} This is an extension of Proposition \ref{EMMCprop}, so it remains to show the trace part of the story is well-behaved. The trace operator $\mathsf{Tr}^T$ is well-defined by (\ref{tracedmonad}) and satisfies the necessary axioms since $\mathsf{Tr}$ is a trace operator. Lastly, it is automatic by definition that $U^T\left(\mathsf{Tr}^T(-)  \right) = \mathsf{Tr}\left( U^T (-) \right)$. 

\phantom{ }\hfill \end{proof}

Observe that, unlike the definition of a traced monad, the definition of a (symmetric) bimonad does not mention algebras or algebra morphisms. It is an open question about traced monads whether it is possible to characterize them without explicitly mentioning their algebras, such as is done with bimonads. The next section provides a solution to this for a special class of bimonads called Hopf monads. 

\section{Trace-Coherent Hopf Monads}\label{tracecoherentsec}

Originally, Hopf monads were introduced by Bruguières and Virelizier \cite{bruguieres2007hopf} as monads on autonomous categories (monoidal categories with duals) that lift autonomous structure to their Eilenberg-Moore category. Together with Lack \cite{bruguieres2011hopf}, they generalized Hopf monads to arbitrary monoidal categories, and as the name suggests, every Hopf monoid induces a Hopf monad. In regards to lifting structure, for monoidal \emph{closed} categories, Hopf monads are precisely the monads that lift monoidal closed structure to their Eilenberg-Moore category.  Of particular interest for this paper, is the case of compact closed categories (which are both monoidal closed and autonomous), where symmetric Hopf monads are precisely the monads that lift compact closed structure to their Eilenberg-Moore category. In this section, we introduce the main novel concept of this paper: trace-coherent Hopf monads, which are precisely the kinds of symmetric Hopf monads that are also traced monads. The advantage of the definition of a trace-coherent Hopf monad is that it does not reference the algebras of the monad or the Eilenberg-Moore category, and yet still results in a lifting of the traced symmetric monoidal structure. 

Let us first review Hopf monads on monoidal categories as introduced by Bruguières, Lack, and Virelizier \cite{bruguieres2011hopf}. Hopf monads are bimonads such that natural transformations that can be defined for any bimonad, called fusion operators, are natural isomorphisms.  

\begin{definition}\cite[Section 2.6 and 2.7]{bruguieres2011hopf} \label{Hopfmonaddef} Let $(T, \mu, \eta, m, m_I)$ be a bimonad on a monoidal category $(\mathbb{X}, \otimes, I, \alpha, \lambda, \rho)$. The \textbf{left fusion operator} of $(T, \mu, \eta, m, m_I)$ is the natural transformation
\[\mathsf{h}^l_{A,B}:  T(A \otimes T (B)) \to T(A) \otimes T(B)\]
defined as follows: 
\begin{align}
    \mathsf{h}^l_{A,B} :=   \xymatrixcolsep{5pc}\xymatrix{ T\left(A \otimes T(B) \right) \ar[r]^-{m_{A,T(B)}} & T(A) \otimes TT(B)\ar[r]^-{1_{T(A)} \otimes \mu_B} & T(A) \otimes T(B) }
\end{align}
and the \textbf{right fusion operator} of $(T, \mu, \eta, m, m_I)$ is the natural transformation
\[\mathsf{h}^r_{A,B}:  T(T(A) \otimes B) \to T(A) \otimes T(B)\]
defined as follows:
\begin{align}
    \mathsf{h}^r_{A,B} :=   \xymatrixcolsep{5pc}\xymatrix{T\left(T(A) \otimes B \right) \ar[r]^-{m_{T(A),B}} & TT(A) \otimes T(B)\ar[r]^-{\mu_A \otimes 1_{T(B)}} & T(A) \otimes T(B)}
\end{align}
A \textbf{Hopf monad} on a monoidal category $(\mathbb{X}, \otimes, I, \alpha, \lambda, \rho)$ is a bimonad $(T, \mu, \eta, m, m_I)$ on \\
$(\mathbb{X}, \otimes, I, \alpha, \lambda, \rho)$ such that the fusion operators $\mathsf{h}^l_{A,B}$ and $\mathsf{h}^r_{A,B}$ are natural isomorphisms, so in particular $T(A \otimes T(B)) \cong T(A) \otimes T(B) \cong T(T(A) \otimes B)$. 
    \end{definition}
    
A list of identities that the fusion operators satisfy can be found in \cite[Proposition 2.6]{bruguieres2011hopf}, and a list of identities that the inverses of the fusion operators satisfy can be found in \cite[Lemma 4.2]{hasegawa2018linear}. In particular, the fusion operators and their inverses are algebra morphisms. 

\begin{lemma}\label{fusiontalg} Let $(T, \mu, \eta, m, m_I)$ be a bimonad on a monoidal category $(\mathbb{X}, \otimes, I, \alpha, \lambda, \rho)$. Then: 
\begin{enumerate}[{\em (i)}]
\item $\mathsf{h}^l_{A,B}:  (T(A \otimes T (B)), \mu_{A \otimes T(B)}) \to (T(A), \mu_A) \otimes^T (T(B), \mu_B)$ is a $T$-algebra morphism; 
\item $\mathsf{h}^r_{A,B}: (T(T(A) \otimes B), \mu_{T(A) \otimes B}) \to (T(A), \mu_A) \otimes^T (T(B), \mu_B)$ is a $T$-algebra morphism. 
\end{enumerate}
Furthermore, if $(T, \mu, \eta, m, m_I)$ is a Hopf monad, then: 
\begin{enumerate}[{\em (i)}]
\item ${\mathsf{h}^l}^{-1}_{A,B}: (T(A), \mu_A) \otimes^T (T(B), \mu_B) \to (T(A \otimes T (B)), \mu_{A \otimes T(B)})$ is a $T$-algebra morphism; 
\item ${\mathsf{h}^r}^{-1}_{A,B}: (T(A), \mu_A) \otimes^T (T(B), \mu_B) \to (T(T(A) \otimes B), \mu_{T(A) \otimes B})$ is a $T$-algebra morphism. 
\end{enumerate}
\end{lemma}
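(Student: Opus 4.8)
The plan is to verify each of the four claimed $T$-algebra morphism identities directly from the definitions, using the comonoidal-transformation axioms for $\mu$ in (\ref{symbimonad}) together with the monad axioms in (\ref{monadeq}). Recall that the $T$-algebra structure on $(T(A \otimes T(B)), \mu_{A \otimes T(B)})$ is the free one, and the $T$-algebra structure on $(T(A), \mu_A) \otimes^T (T(B), \mu_B)$ is $(\mu_A \otimes \mu_B) \circ m_{T(A), T(B)}$ by the formula in Proposition \ref{EMTSMC}(i). So for part (i) the equation to prove is
\begin{equation*}
\mathsf{h}^l_{A,B} \circ \mu_{A \otimes T(B)} = (\mu_A \otimes \mu_B) \circ m_{T(A),T(B)} \circ T(\mathsf{h}^l_{A,B}),
\end{equation*}
and similarly with $\mathsf{h}^r$ for part (ii).

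First I would expand $\mathsf{h}^l_{A,B} = (1_{T(A)} \otimes \mu_B) \circ m_{A,T(B)}$ on the left-hand side and apply the naturality square for $m$ with respect to $\mu_{A\otimes T(B)}$ — but more directly, the key input is the first equation of (\ref{symbimonad}), namely $m_{A,B} \circ \mu_{A \otimes B} = (\mu_A \otimes \mu_B) \circ m_{T(A),T(B)} \circ T(m_{A,B})$, instantiated at the pair $(A, T(B))$. Combining this with naturality of $\mu$ (to move $\mu_B$ past the outer $T$ in $T(\mathsf{h}^l)$) and the multiplication–associativity axiom $\mu_B \circ T(\mu_B) = \mu_B \circ \mu_{T(B)}$, both sides reduce to the same composite. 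The computation for $\mathsf{h}^r$ is the mirror image, using the same axioms on the left-hand factor. For the inverse statements, no new computation is needed: once $\mathsf{h}^l_{A,B}$ is a $T$-algebra morphism between the stated algebras and is an isomorphism in $\mathbb{X}$ (because $T$ is Hopf), its inverse ${\mathsf{h}^l}^{-1}_{A,B}$ is automatically a $T$-algebra morphism — one just composes the $T$-algebra morphism equation for $\mathsf{h}^l_{A,B}$ on both sides with ${\mathsf{h}^l}^{-1}_{A,B}$ and $T({\mathsf{h}^l}^{-1}_{A,B})$ and uses functoriality of $T$. The same remark handles ${\mathsf{h}^r}^{-1}$.

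The main obstacle, such as it is, is purely bookkeeping: correctly tracking which instance of each axiom is applied and at which objects, since $\mathsf{h}^l$ and $\mathsf{h}^r$ already involve $\mu$, so the verification nests the comonoidal axiom for $\mu$ with the monad associativity axiom. There is no conceptual difficulty — this is essentially the observation recorded in \cite[Proposition 2.6]{bruguieres2011hopf} that fusion operators are compatible with the monad structure — so I would present the argument as a short diagram chase (or string-diagram calculation) rather than spelling out every equality, and then dispatch the inverse cases with the one-line remark above.
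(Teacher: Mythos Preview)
Your proposal is correct. The paper itself does not supply a proof of this lemma: it simply records the statement and points to \cite[Proposition 2.6]{bruguieres2011hopf} and \cite[Lemma 4.2]{hasegawa2018linear} for the relevant identities, noting in passing that ``the fusion operators and their inverses are algebra morphisms.'' Your direct verification---instantiating the comonoidal axiom for $\mu$ at $(A,T(B))$, then using monad associativity $\mu_B \circ \mu_{T(B)} = \mu_B \circ T(\mu_B)$ and naturality of $m$ to rewrite to the right-hand side---is exactly the computation those references carry out, and your remark that invertibility in $\mathbb{X}$ automatically upgrades to invertibility in $\mathbb{X}^T$ is the standard way to dispatch the inverse cases.
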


We now turn our attention to the symmetric version of Hopf monads. In the symmetric setting, the fusion operators can be defined from one another using the symmetry, and similarly for their inverses. Therefore, the invertibility of one fusion operator implies the invertibility of the other.    

    \begin{definition}\label{symHopfmonaddef} A \textbf{symmetric Hopf monad} on a symmetric monoidal category $(\mathbb{X}, \!\otimes, \!I, \alpha, \lambda, \rho, \sigma)$ is a Hopf monad $(T, \mu, \eta, m, m_I)$ on the underlying monoidal category $(\mathbb{X}, \otimes, I, \alpha, \lambda, \rho)$ such that $(T, \mu, \eta, m, m_I)$ is also a symmetric bimonad on $(\mathbb{X}, \otimes, I, \alpha, \lambda, \rho, \sigma)$. 
    \end{definition}

\begin{lemma} Let $(T, \mu, \eta, m, m_I)$ be a symmetric bimonad on a symmetric monoidal category $(\mathbb{X}, \otimes, I, \alpha, \lambda, \rho, \sigma)$. Then the following are equivalent: 
\begin{enumerate}[{\em (i)}]
\item $(T, \mu, \eta, m, m_I)$ is a symmetric Hopf monad;
\item The left fusion operator $\mathsf{h}^l_{A,B}:  T(A \otimes T (B)) \to T(A) \otimes T(B)$ is a natural isomorphism;
\item The right fusion operator $\mathsf{h}^r_{A,B}: T(T(A) \otimes B) \to T(A) \otimes T(B)$ is a natural isomorphism. 
\end{enumerate}
\end{lemma}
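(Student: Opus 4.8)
The plan is to prove the equivalence of the three conditions in the expected way, namely $(i) \Leftrightarrow (ii)$ follows essentially from the definitions (a symmetric Hopf monad is by definition a symmetric bimonad that is a Hopf monad, and a Hopf monad requires $\mathsf{h}^l$ to be a natural isomorphism), and the real content is $(ii) \Leftrightarrow (iii)$, which expresses that in the symmetric setting invertibility of one fusion operator forces invertibility of the other. The key observation is that in a symmetric monoidal category the two fusion operators are conjugate to one another via the symmetry isomorphism.

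First I would write down the precise relationship between $\mathsf{h}^l$ and $\mathsf{h}^r$. Using the symmetric comonoidal axiom (\ref{smcendo2}), $m_{B,A} \circ T(\sigma_{A,B}) = \sigma_{T(A), T(B)} \circ m_{A,B}$, together with naturality of $\sigma$, I would establish an identity of the form
\begin{equation*}
\mathsf{h}^r_{B,A} = \sigma_{T(B), T(A)} \circ \mathsf{h}^l_{A,B} \circ T(\sigma_{T(B), A})
\end{equation*}
or the analogous one with left and right swapped — i.e.\ conjugating $\mathsf{h}^l_{A,B}$ by appropriate symmetry isomorphisms (one of which is $T(\sigma_{\cdot,\cdot})$, applied inside $T$, and one of which is $\sigma_{T(\cdot),T(\cdot)}$ on the outside) yields $\mathsf{h}^r$ at suitably permuted objects. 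This is a short diagram chase: expand both sides using the definitions of the fusion operators, push the symmetry through $m$ using (\ref{smcendo2}), and use naturality of $\sigma$ with respect to $\mu_B$ (respectively $\mu_A$).

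Once this conjugation identity is in hand, the equivalence $(ii) \Leftrightarrow (iii)$ is immediate: symmetry isomorphisms are invertible, so $\mathsf{h}^r_{B,A}$ is invertible with inverse $T(\sigma^{-1}_{T(B),A}) \circ {\mathsf{h}^l}^{-1}_{A,B} \circ \sigma^{-1}_{T(B),T(A)}$ exactly when $\mathsf{h}^l_{A,B}$ is; and since $A, B$ range over all objects this holds for all indices simultaneously, giving naturality of the inverse for free. Combined with the definitional implications $(i) \Rightarrow (ii)$ and $(i) \Rightarrow (iii)$, and the fact that $(ii)$ (or $(iii)$) together with the standing hypothesis that $T$ is a symmetric bimonad gives back $(i)$, the cycle closes.

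The main obstacle is purely bookkeeping: getting the associativity and symmetry coherence isomorphisms in exactly the right places when expanding $\mathsf{h}^l$ and $\mathsf{h}^r$, since $A \otimes T(B)$ and $T(A) \otimes B$ are not literally related by a single $\sigma$ but by a symmetry that must be transported through $T$ and through $m$. I expect this to be cleanest in the graphical calculus, or alternatively by citing the relevant identity among the fusion-operator identities in \cite[Proposition 2.6]{bruguieres2011hopf}; there is no conceptual difficulty beyond verifying this one conjugation formula.
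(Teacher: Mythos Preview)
Your proposal is correct and follows the same route as the paper: reduce to $(ii) \Leftrightarrow (iii)$ via a conjugation identity relating $\mathsf{h}^r_{B,A}$ and $\mathsf{h}^l_{A,B}$ through symmetries, which the paper derives explicitly as $\mathsf{h}^r_{B,A} = \sigma_{T(A),T(B)} \circ \mathsf{h}^l_{A,B} \circ T(\sigma_{T(B),A})$ (your outer symmetry has the indices swapped, but you already flagged this as bookkeeping). The paper's short computation uses exactly the two ingredients you name---naturality of $\sigma$ with respect to $\mu_B$, and the symmetric comonoidal axiom~(\ref{smcendo2}).
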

\begin{proof} It suffices to show that $(ii) \Leftrightarrow (iii)$. First observe that for any symmetric bimonad, we can compute that: 
\begin{align*}
  \sigma_{T(A), T(B)} \circ \mathsf{h}^l_{A,B} &=~ \sigma_{T(A), T(B)} \circ (1_{T(A)} \otimes \mu_B) \circ m_{A, T(B)} \\
  &=~ (\mu_B \otimes 1_{T(A)}) \circ \sigma_{T(A), TT(B)} \circ m_{A, T(B)} \tag{Nat. of $\sigma$} \\
  &=~ (\mu_B \circ 1_{T(A)}) \circ m_{T(B), A} \circ T(\sigma_{A, T(B)}) \tag{Sym. Bimonad -- (\ref{smcendo2})} \\
  &=~ \mathsf{h}^r_{B,A} \circ T(\sigma_{A, T(B)})
\end{align*}
Therefore it follows that: 
\begin{align*}
\mathsf{h}^r_{B,A} =   \sigma_{T(A), T(B)} \circ \mathsf{h}^l_{A,B} \circ T(\sigma_{T(B), A}) && \mathsf{h}^l_{A,B} =  \sigma_{T(B), T(A)} \circ \mathsf{h}^r_{B,A} \circ T(\sigma_{A, T(B)})
\end{align*}
Since $\sigma$ is always an isomorphism, if $\mathsf{h}^l$ is an isomorphism then $\mathsf{h}^r$ is also an isomorphism, and if $\mathsf{h}^r$ is an isomorphism then $\mathsf{h}^l$ is also isomorphism. 
\end{proof}

A trace-coherent Hopf monad is a symmetric Hopf monad on traced symmetric monoidal category satisfying a compatibility axiom between the trace operator and the monad's functor using the left fusion operator\footnote{Of course, one could also reformulate this paper in terms of the right fusion operator.} and its inverse. So suppose that we have a traced symmetric monoidal category $(\mathbb{X}, \otimes, I, \alpha, \lambda, \rho, \sigma, \mathsf{Tr})$ and a symmetric Hopf monad $(T, \mu, \eta, m, m_I)$ on its underlying symmetric monoidal category. Consider a map of type:
\[f: A \otimes T(X) \to B \otimes T(X)\]
We can first take its trace and obtain a map of the following type: 
\[ \mathsf{Tr}^{T(X)}_{A,B}\left( f \right): A \to B \]
We can then apply the functor $T$ to the trace and obtain a map of type $T(A) \to T(B)$:
\[ T\left( \mathsf{Tr}^{T(X)}_{A,B}\left( f \right) \right) : T(A) \to T(B) \]
Alternatively, we could have first applied the functor $T$ to $f$ to obtain a map of type: 
\[ T(f): T(A \otimes T(X)) \to T(B \otimes T(X)) \]
We cannot take the trace of $T(f)$ when it is of this form. However, since $T$ is a Hopf monad, we can obtain a map of type $T(A) \otimes T(X) \to T(B) \otimes T(X)$ by post-composing by the left fusion operator and pre-composing by its inverse: 
 \[ \xymatrixcolsep{4pc}\xymatrixrowsep{1pc}\xymatrix{ T(A) \otimes T(X) \ar[r]^-{\mathsf{h}^{l^{-1}}_{A,X}  } & T(A \otimes T(X)) \ar[r]^-{T(f)} & T(B \otimes T(X)) \ar[r]^-{\mathsf{h}^l_{B,X}} &  T(B) \otimes T(X) } \]
 We can then take the trace of this composition to obtain a map of type $T(A) \to T(B)$:
 \[ \mathsf{Tr}^{T(X)}_{T(A),T(B)}\left( \mathsf{h}^l_{B,X} \circ  T(f) \circ \mathsf{h}^{l^{-1}}_{A,X}   \right) : T(A) \to T(B) \] 
To say that $T$ is trace-coherent is to require that these two maps of type $T(A) \to T(B)$ are equal. 

\begin{definition} A \textbf{trace-coherent Hopf monad} on a traced symmetric monoidal category \\
$(\mathbb{X}, \otimes, I, \alpha, \lambda, \rho, \sigma, \mathsf{Tr})$ is a symmetric Hopf monad $(T, \mu, \eta, m, m_I)$ on the underlying symmetric monoidal category $(\mathbb{X}, \otimes, I, \alpha, \lambda, \rho, \sigma)$ such that for every map $f: A \otimes T(X) \to B \otimes T(X)$ the following equality holds: 
 \begin{equation}\label{tracecoherent}\begin{gathered}
T\left( \mathsf{Tr}^{T(X)}_{A,B}\left( f \right) \right) =  \mathsf{Tr}^{T(X)}_{T(A),T(B)}\left( \mathsf{h}^l_{B,X} \circ  T(f) \circ \mathsf{h}^{l^{-1}}_{A,X}   \right) 
 \end{gathered}\end  {equation}  
\end{definition}

We now state and prove the main result of this paper: that trace-coherent Hopf monads are precisely symmetric Hopf monads that are also traced monads. Therefore, the Eilenberg-Moore category of a trace-coherent Hopf monad is again a traced symmetric monoidal category. 

\begin{theorem}\label{mainthm} Let $(\mathbb{X}, \otimes, I, \alpha, \lambda, \rho, \sigma, \mathsf{Tr})$ be a traced symmetric monoidal category and \\
$(T, \mu, \eta, m, m_I)$ a symmetric Hopf monad on the underlying symmetric monoidal category \\
$(\mathbb{X}, \otimes, I, \alpha, \lambda, \rho, \sigma)$. Then $(T, \mu, \eta, m, m_I)$ is a traced monad on $(\mathbb{X}, \otimes, I, \alpha, \lambda, \rho, \sigma, \mathsf{Tr})$ if and only if $(T, \mu, \eta, m, m_I)$ is a trace-coherent Hopf monad on $(\mathbb{X}, \otimes, I, \alpha, \lambda, \rho, \sigma, \mathsf{Tr})$.  
\end{theorem}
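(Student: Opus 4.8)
The plan is to prove both implications by exploiting Lemma \ref{Talglemma4}: two algebra morphisms out of a free algebra agree as soon as they agree after precomposition with the unit $\eta$. This lets us translate the ``all $T$-algebra morphisms'' quantifier in the definition of traced monad into a statement about the single universal map $T(f)$ (or rather its fusion-twisted version) for an arbitrary $f\colon A\otimes T(X)\to B\otimes T(X)$.

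\emph{($\Leftarrow$) Trace-coherent implies traced monad.} Suppose $T$ is trace-coherent, and let $(X,x),(A,a),(B,b)$ be $T$-algebras and $f\colon (A,a)\otimes^T(X,x)\to(B,b)\otimes^T(X,x)$ a $T$-algebra morphism; I must show $\mathsf{Tr}^X_{A,B}(f)\colon (A,a)\to(B,b)$ is a $T$-algebra morphism, i.e. the right-hand equation of (\ref{tracedmonad}). The idea is to compare $f$ with the ``free version'' $\widetilde f\colon A\otimes T(X)\to B\otimes T(X)$ obtained by letting $f$ act along the free algebra on $X$, and to use that $a\colon (T(A),\mu_A)\to(A,a)$ and $x\colon (T(X),\mu_X)\to(X,x)$ are algebra morphisms. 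Concretely, I would set up a comparison of $\mathsf{Tr}^X_{A,B}(f)$ with $\mathsf{Tr}^{T(X)}_{A,B}$ of an appropriate composite built from $f$, $x$, and the fusion operator, using the [Sliding] and [Tightening] axioms to move $x$ (or $\mathsf{h}^l$) through the trace; trace-coherence (\ref{tracecoherent}) then gives $T$ of the relevant trace as a trace of a $T(f)$-composite, and the algebra axioms for $(A,a),(B,b),(X,x)$ together with naturality and the bimonad/Hopf identities (Lemma \ref{fusiontalg}) let the two sides be matched. This chain, done with care, yields $\mathsf{Tr}^X_{A,B}(f)\circ a = b\circ T(\mathsf{Tr}^X_{A,B}(f))$.

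\emph{($\Rightarrow$) Traced monad implies trace-coherent.} Suppose $T$ is a traced monad, and let $f\colon A\otimes T(X)\to B\otimes T(X)$ be arbitrary. By Lemma \ref{fusiontalg}, the composite $g:=\mathsf{h}^l_{B,X}\circ T(f)\circ {\mathsf{h}^l}^{-1}_{A,X}\colon (T(A),\mu_A)\otimes^T(T(X),\mu_X)\to(T(B),\mu_B)\otimes^T(T(X),\mu_X)$ is a $T$-algebra morphism between free-algebra tensor factors. Since $T$ is a traced monad, $\mathsf{Tr}^{T(X)}_{T(A),T(B)}(g)\colon (T(A),\mu_A)\to(T(B),\mu_B)$ is then a $T$-algebra morphism. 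Separately, $T(\mathsf{Tr}^{T(X)}_{A,B}(f))\colon (T(A),\mu_A)\to(T(B),\mu_B)$ is always a $T$-algebra morphism, being $T$ applied to a map (the free algebra structure map $\mu$ makes $T(h)$ an algebra morphism for any $h$). Both are algebra morphisms out of the free algebra $(T(A),\mu_A)$, so by Lemma \ref{Talglemma4} it suffices to check they agree after precomposing with $\eta_A$. Precomposing $T(\mathsf{Tr}^{T(X)}_{A,B}(f))$ with $\eta_A$ gives, by naturality of $\eta$, $\eta_B\circ\mathsf{Tr}^{T(X)}_{A,B}(f)$. For the other side I would precompose $g$ with $\eta_A\otimes 1_{T(X)}$ (using [Tightening] (\ref{tightening1}) to pull $\eta_A$ out of the trace as $\mathsf{Tr}^{T(X)}_{T(A),T(B)}(g)\circ\eta_A = \mathsf{Tr}^{T(X)}_{A,T(B)}\!\big(g\circ(\eta_A\otimes 1)\big)$) and then simplify $\mathsf{h}^l\circ T(f)\circ{\mathsf{h}^l}^{-1}\circ(\eta_A\otimes 1)$ using the known fusion-operator identities from \cite{bruguieres2011hopf} relating ${\mathsf{h}^l}^{-1}$ to $\eta$ — specifically that ${\mathsf{h}^l}^{-1}_{A,X}\circ(\eta_A\otimes 1_{T(X)})$ is built from $\eta$ and $m$ — collapsing the composite down to $\eta_B\circ\mathsf{Tr}^{T(X)}_{A,B}(f)$ after using [Tightening]/[Sliding] and naturality of $\eta$ against $f$ and the trace. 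Equality after $\eta_A$ then gives (\ref{tracecoherent}).

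\emph{Main obstacle.} The routine part is invoking Lemma \ref{Talglemma4}; the genuinely delicate part is the bookkeeping with the fusion operator and its inverse inside the trace — in particular, establishing the two ``reduction'' identities: that conjugating $T(f)$ by $\mathsf{h}^l$ and precomposing with $\eta_A\otimes 1$ reduces to $\eta_B\circ\mathsf{Tr}(f)$-type data (for $\Rightarrow$), and dually that tracing out $T(X)$ after sliding the algebra map $x$ and the fusion operator through recovers $\mathsf{Tr}^X_{A,B}(f)$ composed with $a$ (for $\Leftarrow$). These require combining the trace axioms [Sliding], [Tightening], [Superposing] with the comonoidal-naturality of $\mu,\eta$ and the explicit inverse-fusion identities; getting the order of these rewrites right, and confirming that the version with $x$ an arbitrary algebra structure (not just $\mu_X$) works via the algebra axiom $x\circ\mu_X = x\circ T(x)$, is where the real content of the proof lies. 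I expect the graphical calculus for the symmetric monoidal structure, together with the trace string-diagram conventions from Definition \ref{TSMC}, to be the cleanest way to carry out these manipulations.
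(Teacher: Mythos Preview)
Your proposal is correct and follows essentially the same approach as the paper. The $\Rightarrow$ direction is identical: the paper also forms $g=\mathsf{h}^l_{B,X}\circ T(f)\circ{\mathsf{h}^l}^{-1}_{A,X}$, observes via Lemma~\ref{fusiontalg} that it is a $T$-algebra morphism between free algebras, applies the traced-monad hypothesis to its trace, and then invokes Lemma~\ref{Talglemma4} together with the fusion identities $\mathsf{h}^l_{B,X}\circ\eta_{B\otimes T(X)}=\eta_B\otimes 1$ and ${\mathsf{h}^l}^{-1}_{A,X}\circ(\eta_A\otimes 1)=\eta_{A\otimes T(X)}$ to match after precomposing with $\eta_A$.

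One small clarification on framing: despite your opening sentence, Lemma~\ref{Talglemma4} is \emph{not} used in the paper's $\Leftarrow$ direction. That direction is a direct equational chain starting from $\mathsf{Tr}^X_{A,B}(f)\circ a$, inserting $x\circ\eta_X=1_X$ and using \textbf{[Sliding]} to pass from a trace over $X$ to a trace over $T(X)$, then rewriting $f\circ(a\otimes x)$ via the algebra-morphism hypothesis and the fusion operator (your ``free version'' intuition), pulling $b$ out by \textbf{[Tightening]}, and finally applying trace-coherence (\ref{tracecoherent}) to collapse the inner $\mathsf{h}^l\circ T(-)\circ{\mathsf{h}^l}^{-1}$ to a $T(\mathsf{Tr}(-))$. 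The ingredients you list (sliding $x$, fusion identities such as $T(1\otimes\mu_X)\circ{\mathsf{h}^l}^{-1}={\mathsf{h}^l}^{-1}\circ(1\otimes\mu_X)$ and $\mathsf{h}^l\circ T(1\otimes\eta_X)=m$, and the algebra axiom $x\circ T(x)=x\circ\mu_X$) are exactly what the paper uses, so your sketch is on target; just be aware that no free-algebra reduction via Lemma~\ref{Talglemma4} is needed here.
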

\begin{proof} $\Leftarrow$: Suppose that $(T, \mu, \eta, m, m_I)$ is traced-coherent. Let $(X,x)$, $(A,a)$, and $(B,b)$ be $T$-algebras and $f: (A,a) \otimes^T (X,x) \to (B,b) \otimes^T (X,x)$ be a $T$-algebra morphism, that is, the following equality holds:
 \begin{equation}\label{fTalg}\begin{gathered}
f \circ (a \otimes x) \circ m_{A,X}  = (b \otimes x) \circ m_{B,X} \circ T(f)
 \end{gathered}\end  {equation}  
First, observe that we can compute the following:
\begin{align*}
f \circ (a \otimes x) &=~ f \circ (a \otimes x) \circ \mathsf{h}^l_{A,X} \circ \mathsf{h}^{l^{-1}}_{A,X} \\
&=~ f \circ (a \otimes x) \circ (1_{T(A)} \otimes \mu_X) \circ m_{A,T(X)} \circ \mathsf{h}^{l^{-1}}_{A,X} \\ 
&=~ f \circ (a \otimes x) \circ (1_{T(A)} \otimes T(x)) \circ m_{A,T(X)} \circ \mathsf{h}^{l^{-1}}_{A,X} \tag{$(X,x)$ is a $T$-alg. -- (\ref{Talg})} \\
&=~ f \circ (a \otimes x) \circ m_{A,C} \circ T(1_A \otimes x) \circ \mathsf{h}^{l^{-1}}_{A,X} \tag{Nat. of $m$} \\
&=~ (b \otimes x) \circ m_{B,X} \circ T(f) \circ T(1_A \otimes x) \circ \mathsf{h}^{l^{-1}}_{A,X} \tag{$f$ is a $T$-alg. morph. -- (\ref{fTalg})} 
\end{align*}
So the following equality holds: 
 \begin{equation}\label{fTalg2}\begin{gathered}
f \circ (a \otimes x) = (b \otimes x) \circ m_{B,X} \circ T(f) \circ T(1_A \otimes x) \circ \mathsf{h}^{l^{-1}}_{A,X}
 \end{gathered}\end  {equation}  
The left fusion operator $\mathsf{h}^l$ satisfies the following identity \cite[Proposition 2.6]{bruguieres2011hopf}:
\begin{align} \label{h1}
\mathsf{h}^l_{B,X} \circ T(1_B \otimes \eta_X) = m_{B,X} 
\end{align}
and the inverse of the left fusion operator ${\mathsf{h}^l}^{-1}$ satisfies the following identity \cite[Lemma 4.2]{hasegawa2018linear}: 
\begin{align} \label{h3}
T(1_A \otimes \mu_X) \circ {\mathsf{h}^l}^{-1}_{A,T(X)} = {\mathsf{h}^l}^{-1}_{A,T(X)} \circ (1_{T(A)} \otimes \mu_X)    
\end{align}
Using the above identities, we can show that $\mathsf{Tr}^X_{A,B}(f)$ is a $T$-algebra morphism: 
\begin{align*}
 &\mathsf{Tr}^X_{A,B}(f) \circ a =~  \mathsf{Tr}^X_{T(A),B}\left( f \circ (a \otimes 1_X) \right) \tag{\textbf{[Tightening]} -- (\ref{tightening1})} \\
 &=~ \mathsf{Tr}^X_{T(A),B}\left( f \circ (a \otimes 1_X) \circ (1_{T(A)} \otimes x) \circ (1_{T(A)} \otimes \eta_X) \right) \tag{$(X,x)$ is a $T$-alg. -- (\ref{Talg})} \\
 &=~ \mathsf{Tr}^X_{T(A),B}\left( f \circ (a \otimes x) \circ (1_{T(A)} \otimes \eta_X) \right) \\ 
 &=~ \mathsf{Tr}^X_{T(A),B}\left( (b \otimes x) \circ m_{B,X} \circ T(f) \circ T(1_A \otimes x) \circ \mathsf{h}^{l^{-1}}_{A,X} \circ (1_{T(A)} \otimes \eta_X) \right) \tag{$f$ is a $T$-alg. morph. + (\ref{fTalg2})}  \\ 
  &=~ \mathsf{Tr}^X_{T(A),B}\left( (b \otimes 1_X) \circ (1_{T(B)} \otimes x) \circ m_{B,X} \circ T(f) \circ T(1_A \otimes x) \circ \mathsf{h}^{l^{-1}}_{A,X} \circ (1_{T(A)} \otimes \eta_X) \right)  \\
&=~ b \circ \mathsf{Tr}^X_{T(A),T(B)}\left(  (1_{T(B)} \otimes x) \circ m_{B,X} \circ T(f) \circ T(1_A \otimes x) \circ \mathsf{h}^{l^{-1}}_{A,X} \circ (1_{T(A)} \otimes \eta_X) \right) \tag{\textbf{[Tightening]} -- (\ref{tightening2})} \\ 
&=~ b \circ \mathsf{Tr}^{T(X)}_{T(A),T(B)}\left( m_{B,X} \circ T(f) \circ T(1_A \otimes x) \circ \mathsf{h}^{l^{-1}}_{A,X} \circ (1_{T(A)} \otimes \eta_X) \circ  (1_{T(A)} \otimes x) \right) \tag{\textbf{[Sliding]} -- (\ref{sliding})}\\
&=~ b \circ \mathsf{Tr}^{T(X)}_{T(A),T(B)}\left( m_{B,X} \circ T(f) \circ T(1_A \otimes x) \circ \mathsf{h}^{l^{-1}}_{A,X} \circ (1_{T(A)} \otimes T(x)) \circ (1_{T(A)} \otimes \eta_{T(X)})  \right) \tag{Nat. of $\eta$} \\ 
&=~ b \circ \mathsf{Tr}^{T(X)}_{T(A),T(B)}\left( m_{B,X} \circ T(f) \circ T(1_A \otimes x) \circ \mathsf{h}^{l^{-1}}_{A,X} \circ (T(1_A) \otimes T(x)) \circ (1_{T(A)} \otimes \eta_{T(X)})  \right) \tag{$T$ is a functor} \\ 
&=~ b \circ \mathsf{Tr}^{T(X)}_{T(A),T(B)}\left( m_{B,X} \circ T(f) \circ T(1_A \otimes x) \circ T\left(1_A \otimes T(x) \right) \circ {\mathsf{h}^l}^{-1}_{A,T(X)} \circ (1_{T(A)} \otimes \eta_{T(X)})  \right) \tag{Nat. of ${\mathsf{h}^l}^{-1}$} \\
&=~ b \circ \mathsf{Tr}^{T(X)}_{T(A),T(B)}\left( m_{B,X} \circ T(f) \circ T\left ( (1_A \otimes x) \circ \left(1_A \otimes T(x)\right) \right) \circ {\mathsf{h}^l}^{-1}_{A,T(X)} \circ (1_{T(A)} \otimes \eta_{T(X)})  \right) \tag{$T$ is a functor} \\ 
&=~ b \circ \mathsf{Tr}^{T(X)}_{T(A),T(B)}\left( m_{B,X} \circ T(f) \circ T\left ( (1_A \otimes x) \circ \left(1_A \otimes \mu_X \right) \right) \circ {\mathsf{h}^l}^{-1}_{A,T(X)} \circ (1_{T(A)} \otimes \eta_{T(X)})  \right) \tag{$(X,x)$ is a $T$-alg. -- (\ref{Talg})}\\
&=~ b \circ \mathsf{Tr}^{T(X)}_{T(A),T(B)}\left( m_{B,X} \circ T(f) \circ T(1_A \otimes x) \circ T\left(1_A \otimes \mu_X \right) \circ {\mathsf{h}^l}^{-1}_{A,T(X)} \circ (1_{T(A)} \otimes \eta_{T(X)})  \right) \tag{$T$ is a functor}\\
&=~ b \circ \mathsf{Tr}^{T(X)}_{T(A),T(B)}\left( m_{B,X} \circ T(f) \circ T(1_A \otimes x) \circ \mathsf{h}^{l^{-1}}_{A,X} \circ (1_{T(A)} \otimes \mu_X) \circ (1_{T(A)} \otimes \eta_{T(X)})  \right) \tag{Inv. Fus. Op. Identity -- (\ref{h3})} \\
&=~ b \circ \mathsf{Tr}^{T(X)}_{T(A),T(B)}\left( m_{B,X} \circ T(f) \circ T(1_A \otimes x) \circ \mathsf{h}^{l^{-1}}_{A,X}  \right) \tag{Monad Identity -- (\ref{monadeq})} \\ 
&=~ b \circ \mathsf{Tr}^{T(X)}_{T(A),T(B)}\left( \mathsf{h}^l_{B,X} \circ  T(1_B \otimes \eta_X) \circ T(f) \circ T(1_A \otimes x) \circ \mathsf{h}^{l^{-1}}_{A,X}  \right) \tag{Fusion Op. Identity -- (\ref{h1})} \\ 
&=~ b \circ \mathsf{Tr}^{T(X)}_{T(A),T(B)}\left( \mathsf{h}^l_{B,X} \circ  T\left( (1_B \otimes \eta_X) \circ f \circ (1_A \otimes x) \right) \circ \mathsf{h}^{l^{-1}}_{A,X}  \right) \tag{$T$ is a functor} \\ 
&=~ b \circ  T\left( \mathsf{Tr}^{T(X)}_{A,B}\left( (1_A \otimes \eta_A) \circ f \circ (1_A \otimes x) \right) \right) \tag{Trace-Coherent -- (\ref{tracecoherent})} \\ 
&=~ b \circ  T\left( \mathsf{Tr}^X_{A,B}\left( (1_A \otimes x) \circ (1_A \otimes \eta_A) \circ f \right) \right) \tag{\textbf{[Sliding]} -- (\ref{sliding})} \\
&=~    b \circ  T\left( \mathsf{Tr}^X_{A,B}(f) \right) \tag{$(X,x)$ is a $T$-alg. -- (\ref{Talg})}
\end{align*}
So $\mathsf{Tr}^X_{A,B}(f) \circ a =  b \circ  T\left( \mathsf{Tr}^X_{A,B}(f) \right)$, and thus $\mathsf{Tr}^X_{A,B}(f): (A,a) \to (B,b)$ is a $T$-algebra morphism. Therefore we conclude that $(T, \mu, \eta, m, m_I)$ is a traced monad. 

$\Rightarrow$: Suppose that $(T, \mu, \eta, m, m_I)$ is a traced monad. Let $f: A \otimes T(X) \to B \otimes T(X)$ be an arbitrary morphism in the base category. First observe that:
\[T(f): \left( T(A \otimes T(X)), \mu_{A \otimes T(X)} \right) \to  \left( T(B \otimes T(X)), \mu_{B \otimes T(X)} \right)\] 
is a $T$-algebra morphism, and also recall that the fusion operator and its inverse:
\begin{gather*}
    \mathsf{h}^{l^{-1}}_{A,X}: (T(A), \mu_A) \otimes^T (T(X), \mu_X) \to \left( T(A \otimes T(X)), \mu_{A \otimes T(X)} \right) \\
    \mathsf{h}^l_{B,X}: \left( T(B \otimes T(X)), \mu_{B \otimes T(X)} \right) \to (T(B), \mu_B) \otimes^T (T(X), \mu_X)
\end{gather*}
are also $T$-algebra morphisms (Lemma \ref{fusiontalg}). So then their composite:
\[\mathsf{h}^l_{B,X} \circ  T(f) \circ \mathsf{h}^{l^{-1}}_{A,X}: (T(A), \mu_A) \otimes^T (T(X), \mu_X) \to (T(B), \mu_B) \otimes^T (T(X), \mu_X)\] 
is a $T$-algebra morphism. Since $(T, \mu, \eta, m, m_I)$ is a traced monad, it follows that its trace 
\[\mathsf{Tr}^{T(X)}_{T(A),T(B)}\left( \mathsf{h}^l_{B,X} \circ  T(f) \circ \mathsf{h}^{l^{-1}}_{A,X}   \right): (T(A), \mu_A) \to (T(B), \mu_B)\]
is also a $T$-algebra morphism. Also note that:
\[T\left( \mathsf{Tr}^{T(X)}_{A,B}\left( f \right) \right): (T(A), \mu_A) \to (T(B), \mu_B)\]
is a $T$-algebra morphism by naturality of $\mu$. Now to show that $\mathsf{Tr}^{T(X)}_{T(A),T(B)}\left( \mathsf{h}^l_{B,X} \circ  T(f) \circ \mathsf{h}^{l^{-1}}_{A,X}   \right)$ and $T\left( \mathsf{Tr}^{T(X)}_{A,B}\left( f \right) \right)$ are equal, since they are $T$-algebra morphisms of the same type whose domain are free $T$-algebras, we will use Lemma \ref{Talglemma4}. As such, we will first show that they are equal when pre-composed with $\eta$. To do so, first observe the left fusion operator $\mathsf{h}^l$ satisfies the following identity \cite[Proposition 2.6]{bruguieres2011hopf}:
\begin{align} \label{h2}
\mathsf{h}^l_{B,X} \circ \eta_{B \otimes T(X)}= \eta_B \otimes 1_{T(X)}     
\end{align}
and that the inverse of the left fusion operator ${\mathsf{h}^l}^{-1}$ satisfies the following identity \cite[Lemma 4.2]{hasegawa2018linear}: 
\begin{align} \label{h4}
 \eta_{A \otimes T(X)}= \mathsf{h}^{l^{-1}}_{A,X} \circ \eta_A \otimes 1_{T(X)} 
\end{align}
Using the above identities, we compute: 
\begin{align*}
 \mathsf{Tr}^{T(X)}_{T(A),T(B)}\left( \mathsf{h}^l_{B,X} \circ  T(f) \circ \mathsf{h}^{l^{-1}}_{A,X}   \right) \circ \eta_{A} &=~ \mathsf{Tr}^{T(X)}_{A,T(B)}\left( \mathsf{h}^l_{B,X} \circ  T(f) \circ \mathsf{h}^{l^{-1}}_{A,X} \circ (\eta_{A} \otimes  1_{T(X)} ) \right)  \tag{\textbf{Tightening}} \\
 &=~ \mathsf{Tr}^{T(X)}_{A,T(B)}\left( \mathsf{h}^l_{B,X} \circ  T(f) \circ \eta_{A \otimes T(X)} \right) \tag{Inv. Fus. Op. Identity -- (\ref{h4})} \\
 &=~ \mathsf{Tr}^{T(X)}_{A,T(B)}\left( \mathsf{h}^l_{B,X} \circ \eta_{B \otimes T(X)} \circ f \right) \tag{Nat. of $\eta$} \\ 
 &=~ \mathsf{Tr}^{T(X)}_{A,B}\left( (\eta_B \otimes 1_{T(X)}) \circ  f \right) \tag{Fus. Op. Identity -- (\ref{h2})} \\
 &=~ \eta_B \circ \mathsf{Tr}^{T(X)}_{A,B}\left( f \right) \tag{\textbf{Tightening}} \\
 &=~ T\left( \mathsf{Tr}^{T(X)}_{A,B}\left( f \right) \right) \circ \eta_A \tag{Nat. of $\eta$} 
\end{align*}
So $\mathsf{Tr}^{T(X)}_{T(A),T(B)}\left( \mathsf{h}^l_{B,X} \circ  T(f) \circ \mathsf{h}^{l^{-1}}_{A,X}   \right) \circ \eta_{A} = T\left( \mathsf{Tr}^{T(X)}_{A,B}\left( f \right) \right) \circ \eta_A$. Then by Lemma \ref{Talglemma4}, it follows that:
\[\mathsf{Tr}^{T(X)}_{T(A),T(B)}\left( \mathsf{h}^l_{B,X} \circ  T(f) \circ \mathsf{h}^{l^{-1}}_{A,X}   \right)=T\left( \mathsf{Tr}^{T(X)}_{A,B}\left( f \right) \right)\]
So we conclude that $(T, \mu, \eta, m, m_I)$ is a trace-coherent Hopf monad. 
\end{proof}

\section{Hopf Algebras}\label{sec:hopfcase}

The canonical source of examples of (symmetric) Hopf monads are those induced by (cocommutative) Hopf monoids, and these are called \emph{representable} (symmetric) Hopf monads \cite[Section 5]{bruguieres2011hopf}. The algebras of the induced monad are precisely the modules of the Hopf monoid. In this section, we will prove that the induced symmetric Hopf monad of a cocommutative Hopf monoid is a trace-coherent Hopf monad. In other words, every representable symmetric Hopf monad is a trace-coherent Hopf monad. Therefore, for a cocommutative Hopf monoid in a traced symmetric monoidal category, its category of modules is again a traced symmetric monoidal category. 

The notion of a Hopf monoid generalizes the notion of Hopf algebras from classical algebra. So in particular a Hopf monoid is both a monoid and a comonoid that together form a bimonoid and comes equipped with an antipode. Any bimonoid induces a bimonad \cite[Example 7.5.4]{turaev2017monoidal}, while the antipode is required to build the inverse of the fusion operators. In fact, a bimonoid is a Hopf monoid if and only if its induced bimonad is a Hopf monad. To obtain a symmetric Hopf monad, one requires a cocommutative Hopf monoid, that is, a Hopf monoid whose comultiplication is cocommutative.  

To help with readability and avoid confusion, we highlight the Hopf monoid's wire in blue so that it can be easily distinguished from the other wires. 

\begin{definition} \cite[Section 6.2]{turaev2017monoidal} In a symmetric monoidal category $(\mathbb{X}, \otimes, I, \alpha, \lambda, \rho, \sigma)$, a \textbf{cocommutative Hopf monoid} is a sextuple $(H, \nabla, u, \Delta, e, S)$ consisting of an object $H$, a map $\nabla: H \otimes H \to H$, called the multiplication, and a map $u: I \to H$, called the unit, a map $\Delta: H \to H \otimes H$, called the comultiplication, and a map ${e: H \to I}$, called the counit, and a map ${S: H \to H}$, called the antipode, which are drawn in the graphical calculus as follows: 
\begin{align*}
\unitlength=0.9pt
\begin{picture}(30,30)
\put(15,25){\makebox(0,0){$\nabla:H\otimes H\rightarrow H$}}
\graybox{0}{0}{30}{20}{}%
\thicklines
{\Hcolor
\lineseg{0}{5}{10}{5}
\lineseg{0}{15}{10}{15}
\lineseg{20}{10}{30}{10}
}%
\mul{10}{5}
\end{picture}
&& 
\unitlength=0.8pt
\begin{picture}(30,30)
\put(15,25){\makebox(0,0){$u:I\rightarrow H$}}
\graybox{0}{0}{30}{20}{}%
\thicklines
{\Hcolor
\lineseg{20}{10}{30}{10}
}%
\mulunit{10}{5}
\end{picture}    
&&
\unitlength=0.9pt
\begin{picture}(30,30)
\put(15,25){\makebox(0,0){$\Delta:H\rightarrow H\otimes H$}}
\graybox{0}{0}{30}{20}{}%
\thicklines
{\Hcolor
\lineseg{20}{5}{30}{5}
\lineseg{20}{15}{30}{15}
\lineseg{0}{10}{10}{10}
}%
\comul{10}{5}
\end{picture}
&&
\unitlength=0.9pt
\begin{picture}(30,30)
\put(15,25){\makebox(0,0){$e:H\rightarrow I$}}
\graybox{0}{0}{30}{20}{}%
\thicklines
{\Hcolor
\lineseg{0}{10}{10}{10}
}%
\comulunit{10}{5}
\end{picture} && \unitlength=0.9pt
\begin{picture}(30,30)
\put(15,25){\makebox(0,0){$S:H\rightarrow H$}}
\graybox{0}{0}{30}{20}{}%
\thicklines
{\Hcolor
\lineseg{0}{10}{10}{10}
\lineseg{20}{10}{30}{10}
}%
\bluefbox{10}{5}{10}{10}{$S$}%
\end{picture}  
\end{align*}
and such that:
\begin{enumerate}[{\em (i)}]
\item $(H, \nabla, u)$ is a monoid \cite[Section 6.1.1]{turaev2017monoidal}, that is, the following equalities hold: 
 \begin{equation}\label{monoideq}\begin{gathered}
 \unitlength=0.9pt
\begin{picture}(50,30)
\graybox{0}{0}{50}{30}{}%
\thicklines
{\Hcolor
\lineseg{0}{5}{10}{5}
\lineseg{0}{15}{10}{15}
\lineseg{0}{25}{20}{25}
\lineseg{20}{25}{30}{20}
\lineseg{20}{10}{30}{10}
\lineseg{40}{15}{50}{15}
}%
\mul{10}{5}%
\mul{30}{10}%
\end{picture}
\begin{picture}(20,30)
\put(10,15){\makebox(0,0){$=$}}
\end{picture}
\begin{picture}(50,30)
\graybox{0}{0}{50}{30}{}%
\thicklines
{\Hcolor
\lineseg{0}{5}{20}{5}
\lineseg{0}{15}{10}{15}
\lineseg{0}{25}{10}{25}
\lineseg{20}{20}{30}{20}
\lineseg{20}{5}{30}{10}
\lineseg{40}{15}{50}{15}
}%
\mul{10}{15}%
\mul{30}{10}%
\end{picture} \quad \quad \quad \quad \quad \quad
 \unitlength=0.9pt
\begin{picture}(50,30)
\graybox{0}{0}{50}{30}{}%
\thicklines
{\Hcolor
\lineseg{0}{20}{30}{20}
\lineseg{20}{10}{30}{10}
\lineseg{40}{15}{50}{15}
}%
\mulunit{10}{5}%
\mul{30}{10}%
\end{picture}
\begin{picture}(20,30)
\put(10,15){\makebox(0,0){$=$}}
\end{picture}
\begin{picture}(50,30)
\graybox{0}{0}{50}{30}{}%
\thicklines
{\Hcolor
\lineseg{0}{15}{50}{15}
}%
\end{picture}
\begin{picture}(20,30)
\put(10,15){\makebox(0,0){$=$}}
\end{picture}
\begin{picture}(50,30)
\graybox{0}{0}{50}{30}{}%
\thicklines
{\Hcolor
\lineseg{20}{20}{30}{20}
\lineseg{0}{10}{30}{10}
\lineseg{40}{15}{50}{15}
}%
\mulunit{10}{15}%
\mul{30}{10}%
\end{picture}     
\end{gathered} \end{equation}
\item $(H, \Delta, e)$ is a cocommutative comonoid \cite[Section 6.1.2]{turaev2017monoidal}, that is, the following equalities hold: 
\begin{equation}\label{comonoideq}\begin{gathered}
 \unitlength=0.9pt
\begin{picture}(50,30)
\graybox{0}{0}{50}{30}{}%
\thicklines
{\Hcolor
\lineseg{40}{5}{50}{5}
\lineseg{0}{15}{10}{15}
\lineseg{30}{25}{50}{25}
\lineseg{20}{20}{30}{25}
\lineseg{20}{10}{30}{10}
\lineseg{40}{15}{50}{15}
}%
\comul{10}{10}%
\comul{30}{5}%
\end{picture}
\begin{picture}(20,30)
\put(10,15){\makebox(0,0){$=$}}
\end{picture}
\begin{picture}(50,30)
\graybox{0}{0}{50}{30}{}%
\thicklines
{\Hcolor
\lineseg{30}{5}{50}{5}
\lineseg{0}{15}{10}{15}
\lineseg{40}{25}{50}{25}
\lineseg{20}{20}{30}{20}
\lineseg{20}{10}{30}{5}
\lineseg{40}{15}{50}{15}
}%
\comul{10}{10}%
\comul{30}{15}%
\end{picture} \quad \quad \quad \quad 
 \unitlength=0.9pt
\begin{picture}(50,30)
\graybox{0}{0}{50}{30}{}%
\thicklines
{\Hcolor
\lineseg{20}{20}{50}{20}
\lineseg{20}{10}{30}{10}
\lineseg{0}{15}{10}{15}
}%
\comulunit{30}{5}%
\comul{10}{10}%
\end{picture}
\begin{picture}(20,30)
\put(10,15){\makebox(0,0){$=$}}
\end{picture}
\begin{picture}(50,30)
\graybox{0}{0}{50}{30}{}%
\thicklines
{\Hcolor
\lineseg{0}{15}{50}{15}
}%
\end{picture}
\begin{picture}(20,30)
\put(10,15){\makebox(0,0){$=$}}
\end{picture}
\begin{picture}(50,30)
\graybox{0}{0}{50}{30}{}%
\thicklines
{\Hcolor
\lineseg{20}{20}{30}{20}
\lineseg{20}{10}{50}{10}
\lineseg{0}{15}{10}{15}
}%
\comulunit{30}{15}%
\comul{10}{10}%
\end{picture} \\
\unitlength=0.9pt
\begin{picture}(50,30)
\graybox{0}{0}{50}{30}{}%
\thicklines
{\Hcolor
\lineseg{0}{15}{10}{15}
\lineseg{20}{20}{30}{20}
\lineseg{20}{10}{30}{10}
\lineseg{30}{20}{40}{10}
\lineseg{30}{10}{40}{20}
\lineseg{40}{20}{50}{20}
\lineseg{40}{10}{50}{10}
}%
\comul{10}{10}%
\end{picture}
\begin{picture}(20,30)
\put(10,15){\makebox(0,0){$=$}}
\end{picture}
\begin{picture}(30,30)
\graybox{0}{0}{30}{30}{}%
\thicklines
{\Hcolor
\lineseg{0}{15}{10}{15}
\lineseg{20}{20}{30}{20}
\lineseg{20}{10}{30}{10}
}%
\comul{10}{10}%
\end{picture}    
\end{gathered} \end{equation}
\item $(H, \nabla, u, \Delta, e)$ is also a bimonoid \cite[Section 6.1.3]{turaev2017monoidal}, that is, the following equalities hold:
 \begin{equation}\label{bimonoideq}\begin{gathered}
 \unitlength=0.9pt
\begin{picture}(50,40)
\graybox{0}{0}{50}{40}{}%
\thicklines
{\Hcolor
\lineseg{0}{25}{10}{25}
\lineseg{0}{15}{10}{15}
\lineseg{20}{20}{30}{20}
\lineseg{40}{25}{50}{25}
\lineseg{40}{15}{50}{15}
}%
\mul{10}{15}%
\comul{30}{15}%
\end{picture}
\begin{picture}(20,40)
\put(10,20){\makebox(0,0){$=$}}
\end{picture}
\begin{picture}(60,40)
\graybox{0}{0}{60}{40}{}%
\thicklines
{\Hcolor
\lineseg{0}{30}{10}{30}
\lineseg{0}{10}{10}{10}
\lineseg{20}{35}{40}{35}
\lineseg{20}{25}{25}{25}
\lineseg{20}{15}{25}{15}
\lineseg{20}{5}{40}{5}
\lineseg{25}{25}{35}{15}
\lineseg{25}{15}{35}{25}
\lineseg{35}{25}{40}{25}
\lineseg{35}{15}{40}{15}
\lineseg{50}{30}{60}{30}
\lineseg{50}{10}{60}{10}
}%
\comul{10}{5}%
\comul{10}{25}%
\mul{40}{5}%
\mul{40}{25}%
\end{picture} \quad \quad \quad \quad \quad \begin{picture}(50,40)
\graybox{0}{0}{50}{40}{}%
\thicklines
{\Hcolor\lineseg{20}{20}{30}{20}}%
\comulunit{30}{15}%
\mulunit{10}{15}%
\end{picture}
\begin{picture}(20,40)
\put(10,20){\makebox(0,0){$=$}}
\end{picture}
\begin{picture}(30,40)
\graybox{0}{0}{30}{40}{}%
\thicklines
\end{picture} \\
\unitlength=.9pt
\begin{picture}(50,40)
\graybox{0}{0}{50}{40}{}%
\thicklines
{\Hcolor
\lineseg{20}{20}{30}{20}
\lineseg{40}{25}{50}{25}
\lineseg{40}{15}{50}{15}
}%
\mulunit{10}{15}%
\comul{30}{15}%
\end{picture}
\begin{picture}(20,40)
\put(10,20){\makebox(0,0){$=$}}
\end{picture}
\begin{picture}(30,40)
\graybox{0}{0}{30}{40}{}%
\thicklines
{\Hcolor
\lineseg{20}{30}{30}{30}
\lineseg{20}{10}{30}{10}
}%
\mulunit{10}{5}%
\mulunit{10}{25}%
\end{picture}
\quad \quad \quad \quad \quad 
\unitlength=.9pt
\begin{picture}(50,40)
\graybox{0}{0}{50}{40}{}%
\thicklines
{\Hcolor
\lineseg{20}{20}{30}{20}
\lineseg{0}{25}{10}{25}
\lineseg{0}{15}{10}{15}
}%
\comulunit{30}{15}%
\mul{10}{15}%
\end{picture}
\begin{picture}(20,40)
\put(10,20){\makebox(0,0){$=$}}
\end{picture}
\begin{picture}(30,40)
\graybox{0}{0}{30}{40}{}%
\thicklines
{\Hcolor
\lineseg{0}{30}{10}{30}
\lineseg{0}{10}{10}{10}
}%
\comulunit{10}{5}%
\comulunit{10}{25}%
\end{picture}
\end{gathered} \end{equation}
\item The following equalities also hold: 
 \begin{equation}\label{hopfeq}\begin{gathered}
    \unitlength=.9pt
\begin{picture}(70,30)
\graybox{0}{0}{70}{30}{}%
\thicklines
{\Hcolor
\lineseg{0}{15}{10}{15}
\lineseg{20}{10}{50}{10}
\lineseg{20}{20}{50}{20}
\lineseg{60}{15}{70}{15}
}%
\comul{10}{10}%
\bluefbox{30}{5}{10}{10}{$S$}%
\mul{50}{10}%
\end{picture}
\begin{picture}(20,40)
\put(10,15){\makebox(0,0){$=$}}
\end{picture}
\begin{picture}(50,30)
\graybox{0}{0}{50}{30}{}%
\thicklines
{\Hcolor
\lineseg{0}{15}{10}{15}
\lineseg{40}{15}{50}{15}
}%
\comulunit{10}{10}%
\mulunit{30}{10}%
\end{picture}
\begin{picture}(20,40)
\put(10,15){\makebox(0,0){$=$}}
\end{picture}
\begin{picture}(70,30)
\graybox{0}{0}{70}{30}{}%
\thicklines
{\Hcolor 
\lineseg{0}{15}{10}{15}
\lineseg{20}{10}{50}{10}
\lineseg{20}{20}{50}{20}
\lineseg{60}{15}{70}{15}
}%
\comul{10}{10}%
\bluefbox{30}{15}{10}{10}{$S$}%
\mul{50}{10}%
\end{picture}
 \end{gathered}\end  {equation}  
\end{enumerate}
\end{definition}

\begin{lemma}\label{hopfmonoidhopfmonad} \cite[Example 2.10]{bruguieres2011hopf} Let $(H, \nabla, u, \Delta, e, S)$ be a cocommutative Hopf monoid in a symmetric monoidal category $(\mathbb{X}, \otimes, I, \alpha, \lambda, \rho, \sigma)$. Then $(H \otimes -, \mu^\nabla, \eta^u, m^\Delta, m_I^e)$ is a symmetric Hopf monad where:
\begin{enumerate}[{\em (i)}]
\item The endofunctor $H \otimes -: \mathbb{X} \to \mathbb{X}$ is defined on objects as $(H \otimes -)(A) = H \otimes A$ and on maps as $(H \otimes -)(f) = 1_H \otimes f$;
\item The natural transformations $\mu_A^\nabla: H \otimes (H \otimes A) \to H \otimes A$ and $\eta^u_A: A \to H \otimes A$ are respectively defined as follows: 
\begin{align*}
\begin{array}[c]{c}\mu_A^\nabla  := \end{array}
\begin{array}[c]{c}
\unitlength=0.8pt
\begin{picture}(50,50)
\graybox{0}{0}{50}{40}{}%
\thicklines
\lineseg{0}{30}{50}{30}
{\Hcolor
\lineseg{0}{20}{20}{20}
\lineseg{0}{10}{20}{10}
\lineseg{30}{15}{50}{15}}%
\mul{20}{10}%
\put(-5,30){\makebox(0,0){\scriptsize$A$}}
\put(-5,20){\makebox(0,0){\scriptsize$H$}}
\put(-5,10){\makebox(0,0){\scriptsize$H$}}
\put(55,30){\makebox(0,0){\scriptsize$A$}}
\put(55,15){\makebox(0,0){\scriptsize$H$}}
\end{picture}
\end{array}
&& 
\begin{array}[c]{c}\eta_A^\nabla  := \end{array}
\begin{array}[c]{c}
\unitlength=0.8pt
\begin{picture}(50,50)
\graybox{0}{0}{50}{40}{}%
\thicklines
\lineseg{0}{30}{50}{30}
{\Hcolor\lineseg{30}{15}{50}{15}}%
\mulunit{20}{10}%
\put(-5,30){\makebox(0,0){\scriptsize$A$}}
\put(55,30){\makebox(0,0){\scriptsize$A$}}
\put(55,15){\makebox(0,0){\scriptsize$H$}}
\end{picture} 
\end{array}
\end{align*}
\item The natural transformation $m^\Delta_{A,B}: H \otimes (A \otimes B) \to (H \otimes A) \otimes (H \otimes B)$ and the map $m^e_I: H \otimes I \to I$ are respectively defined as follows: 
\begin{align*}
\begin{array}[c]{c}m^\Delta_{A,B} := \end{array}
\begin{array}[c]{c}
\unitlength=0.8pt
\begin{picture}(50,50)
\graybox{0}{0}{50}{40}{}%
\thicklines
{\Hcolor
\lineseg{0}{35}{50}{35}
\lineseg{0}{10}{10}{10}
\lineseg{40}{25}{50}{25}
\lineseg{30}{15}{40}{25}
\lineseg{20}{15}{30}{15}
\lineseg{20}{5}{50}{5}
}%
\lineseg{0}{25}{30}{25}
\lineseg{30}{25}{40}{15}
\lineseg{40}{15}{50}{15}
\comul{10}{5}%
\put(-5,35){\makebox(0,0){\scriptsize$B$}}
\put(-5,25){\makebox(0,0){\scriptsize$A$}}
\put(-5,10){\makebox(0,0){\scriptsize$H$}}
\put(55,35){\makebox(0,0){\scriptsize$B$}}
\put(55,25){\makebox(0,0){\scriptsize$H$}}
\put(55,15){\makebox(0,0){\scriptsize$A$}}
\put(55,5){\makebox(0,0){\scriptsize$H$}}
\end{picture}
\end{array}
&& 
\begin{array}[c]{c}m^e_I  := \end{array}
\begin{array}[c]{c}
\unitlength=0.8pt
\begin{picture}(50,50)
\graybox{0}{0}{50}{40}{}%
\thicklines
{\Hcolor\lineseg{0}{20}{20}{20}}%
\comulunit{20}{15}%
\put(-5,20){\makebox(0,0){\scriptsize$H$}}
\end{picture}
\end{array}
\end{align*}
\item The left fusion operator ${\mathsf{h}^l_{A,B}}: H \otimes \left( A \otimes \left( H \otimes B \right) \right) \to (H \otimes A) \otimes (H \otimes B)$ and its inverse are respectively given as follows: 
\begin{align*}
    \begin{array}[c]{c}{\mathsf{h}^l_{A,B}} := \end{array} \begin{array}[c]{c}\unitlength=.9pt
\begin{picture}(50,60)
\graybox{0}{0}{50}{50}{}%
\thicklines
{\Hcolor
\lineseg{0}{35}{30}{35}
\lineseg{0}{10}{10}{10}
\lineseg{20}{15}{30}{25}
\lineseg{20}{5}{50}{5}
\lineseg{40}{30}{50}{30}
}%
\lineseg{0}{45}{50}{45}
\lineseg{0}{25}{20}{25}
\lineseg{20}{25}{30}{15}
\lineseg{30}{15}{50}{15}
\comul{10}{5}%
\mul{30}{25}%
\put(-5,45){\makebox(0,0){\scriptsize$B$}}
\put(-5,35){\makebox(0,0){\scriptsize$H$}}
\put(-5,25){\makebox(0,0){\scriptsize$A$}}
\put(-5,10){\makebox(0,0){\scriptsize$H$}}
\put(55,45){\makebox(0,0){\scriptsize$B$}}
\put(55,30){\makebox(0,0){\scriptsize$H$}}
\put(55,15){\makebox(0,0){\scriptsize$A$}}
\put(55,5){\makebox(0,0){\scriptsize$H$}}
\end{picture}\end{array} && \begin{array}[c]{c}{\mathsf{h}^l}^{-1}_{A,B} := \end{array} \begin{array}[c]{c}\unitlength=.9pt
\begin{picture}(70,60)
\graybox{0}{0}{70}{50}{}%
\thicklines
{\Hcolor
\lineseg{0}{35}{50}{35}
\lineseg{0}{10}{10}{10}
\lineseg{20}{15}{30}{25}
\lineseg{20}{5}{70}{5}
\lineseg{40}{25}{50}{25}
\lineseg{60}{30}{70}{30}
}%
\lineseg{0}{45}{70}{45}
\lineseg{0}{25}{20}{25}
\lineseg{20}{25}{30}{15}
\lineseg{30}{15}{70}{15}
\bluefbox{30}{20}{10}{10}{$S$}%
\comul{10}{5}%
\mul{50}{25}%
\put(-5,45){\makebox(0,0){\scriptsize$B$}}
\put(-5,35){\makebox(0,0){\scriptsize$H$}}
\put(-5,25){\makebox(0,0){\scriptsize$A$}}
\put(-5,10){\makebox(0,0){\scriptsize$H$}}
\put(75,45){\makebox(0,0){\scriptsize$B$}}
\put(75,30){\makebox(0,0){\scriptsize$H$}}
\put(75,15){\makebox(0,0){\scriptsize$A$}}
\put(75,5){\makebox(0,0){\scriptsize$H$}}
\end{picture}\end{array}
\end{align*}
\end{enumerate}
\end{lemma}

We now prove the main result of this section: 

\begin{proposition} \label{prop:hopfalgtrace} Let $(\mathbb{X}, \otimes, I, \alpha, \lambda, \rho, \sigma, \mathsf{Tr})$ be a traced symmetric monoidal category and\\
$(H, \nabla, u, \Delta, e, S)$ be a cocommutative Hopf monoid in the underlying symmetric monoidal category $(\mathbb{X}, \otimes, I, \alpha, \lambda, \rho, \sigma)$. Then the induced symmetric Hopf monad $(H \otimes -, \mu^\nabla, \eta^u, m^\Delta, m_I^e)$ is a trace-coherent Hopf monad on $(\mathbb{X}, \otimes, I, \alpha, \lambda, \rho, \sigma, \mathsf{Tr})$, that is, for any map $f: A \otimes (H \otimes X) \to B \otimes (H \otimes X)$ the following equality holds (which is equation (\ref{tracecoherent}) drawn in the graphical calculus): 
\begin{align*}
\unitlength=.9pt
\begin{picture}(100,90)
\rgbbox{0}{0}{100}{90}{.9}{.9}{.9}{}%
\thicklines
\trace{40}{60}{20}{10}
{\Hcolor\trace{40}{60}{20}{25}}%
\rgbfbox{40}{15}{20}{40}{.9}{.8}{0}{$f$}%
\idline{0}{20}{40} \idline{60}{20}{40}
{\Hcolor\idline{0}{5}{100}}%
\end{picture}
\begin{picture}(20,90)
\put(10,40){\makebox(0,0){$=$}}
\end{picture}
\begin{picture}(230,90)
\rgbbox{0}{0}{230}{90}{.9}{.9}{.9}{}%
\thicklines
\trace{40}{60}{150}{10}
{\Hcolor\trace{40}{60}{150}{20}}%
\comul{40}{10}%
\bluefbox{70}{25}{10}{10}{$S$}%
\mul{90}{30}%
\rgbfbox{110}{15}{20}{40}{.9}{.8}{0}{$f$}%
\comul{140}{5}%
\mul{170}{25}%
{\Hcolor
\idline{40}{40}{50}
\qbezier(55,20)(55,20)(65,30)
\idline{65}{30}{5} \idline{80}{30}{10} \idline{100}{35}{10}
\idline{130}{35}{40} 
\idline{0}{5}{30} \put(30,5){\line(1,1){10}}\idline{50}{20}{5} 
\idline{50}{10}{90}
\idline{150}{15}{5} \put(155,15){\line(1,1){10}} \idline{165}{25}{5}
\idline{150}{5}{80}
\put(180,30){\line(1,1){10}}
}%
\idline{40}{50}{70} \idline{130}{50}{60}
\idline{0}{20}{30} \qbezier(30,20)(30,20)(35,25) \idline{35}{25}{20}
\qbezier(55,25)(55,25)(60,20)
\idline{60}{20}{50} \idline{130}{20}{25}
\qbezier(155,20)(155,20)(160,15) 
\idline{160}{15}{25} 
\qbezier(185,15)(185,15)(190,20)
\idline{190}{20}{40}
\end{picture}    
\end{align*}
Therefore, $(H \otimes -, \mu^\nabla, \eta^u, m^\Delta, m_I^e)$ is a traced monad on $(\mathbb{X}, \otimes, I, \alpha, \lambda, \rho, \sigma, \mathsf{Tr})$. 
\end{proposition}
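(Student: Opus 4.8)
The strategy is to verify the trace-coherence axiom (\ref{tracecoherent}) directly for the monad $H \otimes -$, working entirely in the graphical calculus (which is why the identity has been displayed diagrammatically). Since $H \otimes -$ is already known to be a symmetric Hopf monad by Lemma \ref{hopfmonoidhopfmonad}, verifying (\ref{tracecoherent}) is all that is required to conclude it is a trace-coherent Hopf monad, and then the final assertion that it is a traced monad follows immediately from Theorem \ref{mainthm}. So first I would substitute the string diagrams for $\mathsf{h}^l_{B,X}$ and $\mathsf{h}^{l^{-1}}_{A,X}$ given in Lemma \ref{hopfmonoidhopfmonad}(iv) into the right-hand side of (\ref{tracecoherent}). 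Reading off the resulting picture: the ``monad strand'' $H$ of the input is comultiplied once by $\mathsf{h}^{l^{-1}}$ and once by $\mathsf{h}^l$, hence by coassociativity into three copies $h_{(1)}, h_{(2)}, h_{(3)}$; the copy $h_{(1)}$ becomes the $H$-component of the first output $T(B)$; the internal $H$-component of $T(X) = H \otimes X$ enters $f$ as $S(h_{(3)})\cdot(-)$ and leaves $f$ multiplied by $h_{(2)}\cdot(-)$; and $A$, $B$, $X$ and the box $f$ pass through unchanged.

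The next step is to take the trace over $T(X)$. The $X$-strand is a bystander looping trivially around $f$, so (using \textbf{[Vanishing]} (\ref{vanishing1}) to peel it off, or simply treating the two strands of the loop in parallel) attention reduces to the $H$-strand. There the trace identifies the two ends of the wire carrying $h_{(2)}\cdot(f\text{'s }H\text{-output})$, and \textbf{[Sliding]} (\ref{sliding}) together with associativity of $\nabla$ lets one slide the factor $h_{(2)}$ around to sit next to the factor $S(h_{(3)})$: going once around the loop, the $H$-output of $f$ is left-multiplied by $S(h_{(3)})\cdot h_{(2)}$ before it re-enters $f$. Now comes the key algebraic step: since, by coassociativity, $h_{(2)} \otimes h_{(3)}$ is $\Delta$ applied to a single Sweedler component $h_{(2,3)}$ of $h$, cocommutativity of $(H, \Delta, e)$ (the third identity of (\ref{comonoideq})) lets one interchange the two copies, and then the antipode axiom (\ref{hopfeq}) gives $S(h_{(3)})\cdot h_{(2)} = (u \circ e)(h_{(2,3)})$. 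This is precisely where cocommutativity is used, in keeping with the fact that cocommutativity is what makes $H \otimes -$ a \emph{symmetric} Hopf monad.

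After this rewrite the loop-map on the $H$-strand is $\nabla\circ(u\otimes -)$, which is the identity by the unit law (\ref{monoideq}); meanwhile the $e$ now applied to $h_{(2,3)}$ combines with the surviving copy $h_{(1)}$ through the counit law (\ref{comonoideq}) to leave the monad strand completely untouched. Hence the composite-then-trace collapses to the trace over $T(X)$ of $1_H \otimes f$, and a final application of \textbf{[Superposing]} (\ref{superposing}) identifies this with $1_H \otimes \mathsf{Tr}^{T(X)}_{A,B}(f) = T\big(\mathsf{Tr}^{T(X)}_{A,B}(f)\big)$, which is the left-hand side of (\ref{tracecoherent}). Thus $H \otimes -$ is trace-coherent, and Theorem \ref{mainthm} then yields that it is a traced monad.

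I expect the only real difficulty to be bookkeeping rather than conceptual: correctly tracking which of the three comultiplication copies plays which role, and the left/right orientation of each multiplication in $\mathsf{h}^l$ and $\mathsf{h}^{l^{-1}}$ (this orientation is exactly what decides whether the bare antipode axiom suffices or whether cocommutativity must first be invoked), while cleanly suppressing the associativity and unit isomorphisms. Staying in the graphical calculus throughout — letting \textbf{[Sliding]}, \textbf{[Vanishing]} and \textbf{[Superposing]} do the routing and the Hopf-monoid axioms act as local rewrites — should keep this under control.
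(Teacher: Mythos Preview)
Your proposal is correct and takes essentially the same approach as the paper: both are graphical-calculus computations that slide the post-$f$ multiplication from $\mathsf{h}^l$ around the loop to meet the antipode from $\mathsf{h}^{l^{-1}}$, then invoke cocommutativity followed by the antipode identity and the (co)unit laws to collapse everything to $1_H\otimes\mathsf{Tr}(f)$. The only presentational difference is that the paper makes explicit an intermediate use of \textbf{[Yanking]} (together with \textbf{[Tightening]} and \textbf{[Superposing]}) to justify sliding a multiplication that carries an external $H$-input around the loop, whereas you absorb this into a single ``\textbf{[Sliding]} together with associativity'' step; this is a routine generalized-sliding manoeuvre and not a gap.
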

\begin{proof} We leave it as an exercise for the reader to check that the above equality is indeed the trace-coherent axiom drawn in the graphical calculus. To prove it, using the graphical calculus we compute: 
\begin{align*}
&\begin{array}[c]{c} \unitlength=.9pt\begin{picture}(230,90)
\rgbbox{0}{0}{230}{90}{.9}{.9}{.9}{}%
\thicklines
\trace{40}{60}{150}{10}
{\Hcolor\trace{40}{60}{150}{20}}%
\comul{40}{10}%
\bluefbox{70}{25}{10}{10}{$S$}%
\mul{90}{30}%
\rgbfbox{110}{15}{20}{40}{.9}{.8}{0}{$f$}%
\comul{140}{5}%
\mul{170}{25}%
{\Hcolor 
\idline{40}{40}{50}
\qbezier(55,20)(55,20)(65,30)
\idline{65}{30}{5} \idline{80}{30}{10} \idline{100}{35}{10}
\idline{130}{35}{40} 
\idline{0}{5}{30} \put(30,5){\line(1,1){10}}\idline{50}{20}{5} 
\idline{50}{10}{90}
\idline{150}{15}{5} \put(155,15){\line(1,1){10}} \idline{165}{25}{5}
\idline{150}{5}{80}
\put(180,30){\line(1,1){10}}
}%
\idline{40}{50}{70} \idline{130}{50}{60}
\idline{0}{20}{30} \qbezier(30,20)(30,20)(35,25) \idline{35}{25}{20}
\qbezier(55,25)(55,25)(60,20)
\idline{60}{20}{50} \idline{130}{20}{25}
\qbezier(155,20)(155,20)(160,15) 
\idline{160}{15}{25} 
\qbezier(185,15)(185,15)(190,20)
\idline{190}{20}{40}
\end{picture} 
 \end{array} \\
 = & \begin{array}[c]{c} \unitlength=.9pt \begin{picture}(230,90)
\rgbbox{0}{0}{230}{90}{.9}{.9}{.9}{}%
\thicklines
\trace{40}{60}{150}{10}
{\Hcolor
\trace{40}{60}{150}{15}
\trace{40}{60}{150}{25}
}%
\comul{40}{10}%
\bluefbox{70}{25}{10}{10}{$S$}%
\mul{90}{30}%
\rgbfbox{110}{15}{20}{40}{.9}{.8}{0}{$f$}%
\comul{140}{5}%
\mul{50}{35}%
{\Hcolor 
\idline{40}{45}{10}
\idline{40}{35}{10}
\idline{60}{40}{30}
\qbezier(55,20)(55,20)(65,30)
\idline{65}{30}{5} \idline{80}{30}{10} \idline{100}{35}{10}
\idline{130}{35}{30} \put(160,35){\line(1,1){10}} \idline{170}{45}{20}
\idline{0}{5}{30} \put(30,5){\line(1,1){10}}\idline{50}{20}{5}  
\idline{50}{10}{90}
\idline{150}{15}{5} \put(155,15){\line(1,1){20}} \idline{175}{35}{15}
\idline{150}{5}{80}
}%
\idline{40}{50}{70} \idline{130}{50}{60}
\idline{0}{20}{30} \qbezier(30,20)(30,20)(35,25) \idline{35}{25}{20}
\qbezier(55,25)(55,25)(60,20) \idline{60}{20}{50}
\idline{130}{20}{25} \qbezier(155,20)(155,20)(160,15) 
\idline{160}{15}{25} 
\qbezier(185,15)(185,15)(190,20) \idline{190}{20}{40}
\end{picture}  \end{array} \tag{\textbf{[Sliding] -- (\ref{sliding})}} \\
 = & \begin{array}[c]{c} \unitlength=.9pt \begin{picture}(230,90)
\rgbbox{0}{0}{230}{90}{.9}{.9}{.9}{}%
\thicklines
\trace{140}{60}{20}{10}
{\Hcolor\trace{40}{65}{140}{15}}%
\comul{40}{10}%
\bluefbox{70}{25}{10}{10}{$S$}%
\mul{120}{30}%
\rgbfbox{140}{15}{20}{40}{.9}{.8}{0}{$f$}%
\comul{70}{5}%
\mul{100}{40}%
{\Hcolor
\idline{40}{50}{60}
\idline{0}{5}{30} \put(30,5){\line(1,1){10}}
\idline{50}{20}{5}\put(55,20){\line(1,1){10}}\idline{65}{30}{5}
\idline{50}{10}{20}
\idline{80}{15}{5} \qbezier(85,15)(85,15)(95,40) \idline{95}{40}{5}
\idline{80}{30}{40} 
\qbezier(110,45)(110,45)(120,40)
\idline{130}{35}{10}
\idline{160}{35}{5} \qbezier(165,35)(165,35)(180,50)
\idline{80}{5}{150}
}%
\idline{0}{20}{30} \qbezier(30,20)(30,20)(35,25) \idline{35}{25}{20}
\qbezier(55,25)(55,25)(60,20) \idline{60}{20}{80}
\idline{160}{20}{70}
\end{picture}  \end{array} \tag{\textbf{[Tightening]} + \textbf{[Superposing]} + \textbf{[Yanking]} -- (\ref{tightening1}) + (\ref{tightening2}) + (\ref{superposing}) + (\ref{yanking})} \\
 = & \begin{array}[c]{c} \unitlength=.9pt \begin{picture}(230,90)
\rgbbox{0}{0}{230}{90}{.9}{.9}{.9}{}%
\thicklines
\trace{140}{60}{20}{10}
{\Hcolor\trace{40}{65}{140}{15}}%
\comul{30}{5}%
\bluefbox{75}{25}{10}{10}{$S$}%
\mul{120}{30}%
\rgbfbox{140}{15}{20}{40}{.9}{.8}{0}{$f$}%
\comul{50}{10}%
\mul{100}{25}%
{\Hcolor
\idline{40}{50}{60}
\idline{0}{5}{20} \qbezier(20,5)(20,5)(25,10) \idline{25}{10}{5}
\idline{60}{20}{5}\put(65,20){\line(1,1){10}}
\idline{60}{10}{25}
\qbezier(85,10)(85,10)(95,35) \idline{95}{35}{5}
\idline{85}{30}{5} \qbezier(90,30)(90,30)(95,25) \idline{95}{25}{5}
\qbezier(100,50)(100,50)(120,40)
\idline{110}{30}{10}
\idline{130}{35}{10}
\idline{160}{35}{5} \qbezier(165,35)(165,35)(180,50)
\idline{40}{5}{190}
\idline{40}{15}{10}
}%
\idline{0}{20}{30} \qbezier(30,20)(30,20)(35,25) \idline{35}{25}{30}
\qbezier(65,25)(65,25)(70,20) \idline{70}{20}{70}
\idline{160}{20}{70}
\end{picture} \end{array} \tag{(Co)Assoc. of the (co)mult. -- (\ref{monoideq}) + (\ref{comonoideq})}\\
 = & \begin{array}[c]{c} \unitlength=.9pt \begin{picture}(230,90)
\rgbbox{0}{0}{230}{90}{.9}{.9}{.9}{}%
\thicklines
\trace{140}{60}{20}{10}
{\Hcolor\trace{40}{65}{140}{15}}%
\comul{30}{5}%
\bluefbox{75}{7}{10}{10}{$S$}%
\mul{120}{30}%
\rgbfbox{140}{15}{20}{40}{.9}{.8}{0}{$f$}%
\comul{50}{10}%
\mul{100}{25}%
{\Hcolor
\idline{40}{50}{60}
\idline{0}{5}{20} \qbezier(20,5)(20,5)(25,10) \idline{25}{10}{5}
\idline{60}{20}{5}\put(65,20){\line(1,1){15}}
\idline{60}{10}{15}
\qbezier(85,10)(85,10)(95,25) \idline{80}{35}{20}
 \idline{95}{25}{5}
\qbezier(100,50)(100,50)(120,40)
\idline{110}{30}{10}
\idline{130}{35}{10}
\idline{160}{35}{5} \qbezier(165,35)(165,35)(180,50)
\idline{40}{5}{190}
\idline{40}{15}{10}
}%
\idline{0}{20}{30} \qbezier(30,20)(30,20)(35,25) \idline{35}{25}{30}
\qbezier(65,25)(65,25)(70,20) \idline{70}{20}{70}
\idline{160}{20}{70}
\end{picture} \end{array} \tag{Cocomm. of the comult. -- (\ref{comonoideq})} \\
 = & \begin{array}[c]{c} \unitlength=.9pt \begin{picture}(230,90)
\rgbbox{0}{0}{230}{90}{.9}{.9}{.9}{}%
\thicklines
\trace{140}{60}{20}{10}
{\Hcolor\trace{40}{65}{140}{15}}%
\comul{30}{5}%
\mul{120}{30}%
\rgbfbox{140}{15}{20}{40}{.9}{.8}{0}{$f$}%
{\Hcolor 
\idline{40}{50}{60}
\idline{0}{5}{20} \qbezier(20,5)(20,5)(25,10) \idline{25}{10}{5}
\qbezier(100,50)(100,50)(120,40)
\idline{110}{30}{10}
\idline{130}{35}{10}
\idline{160}{35}{5} \qbezier(165,35)(165,35)(180,50)
\idline{40}{5}{190}
\idline{40}{15}{10}
}%
\idline{0}{20}{30} \qbezier(30,20)(30,20)(35,25) \idline{35}{25}{30}
\qbezier(65,25)(65,25)(70,20) \idline{70}{20}{70}
\idline{160}{20}{70}
\mulunit{100}{25}%
\comulunit{50}{10}%
\end{picture} \end{array} \tag{Antipode Identity -- (\ref{hopfeq})} \\
 = & \begin{array}[c]{c} \unitlength=.9pt \begin{picture}(100,90)
\rgbbox{0}{0}{100}{90}{.9}{.9}{.9}{}%
\thicklines
\trace{40}{60}{20}{10}
{\Hcolor\trace{40}{60}{20}{25}}%
\rgbfbox{40}{15}{20}{40}{.9}{.8}{0}{$f$}%
\idline{0}{20}{40} \idline{60}{20}{40}
{\Hcolor\idline{0}{5}{100}}%
\end{picture} \end{array} \tag{(Co)unit identities -- (\ref{monoideq}) + (\ref{comonoideq})}
\end{align*}
So the desired equality holds. As such, we conclude that $(H \otimes -, \mu^\nabla, \eta^u, m^\Delta, m_I^e)$ is trace-coherent, and therefore also a traced monad. 
\end{proof}

Therefore the Eilenberg-Moore category of the induced monad of a cocommutative Hopf monoid will be a traced symmetric monoidal category. However, as mentioned, the algebras in this case correspond precisely to modules. 

\begin{definition} \cite[Section 6.1.1]{turaev2017monoidal} Let $(H, \nabla, u, \Delta, e, S)$ be a cocommutative Hopf monoid in a symmetric monoidal category $(\mathbb{X}, \otimes, I, \alpha, \lambda, \rho, \sigma)$. A (left) \textbf{module over $(H, \nabla, u, \Delta, e, S)$}, or simply a \textbf{$H$-module}, is a pair $(A, a)$ consisting of an object $A$ and a map ${a: H \otimes A \to A}$, called the action, drawn in the graphical calculus as:
\[ \unitlength=1pt
\begin{picture}(30,40)
\put(15,35){\makebox(0,0){$a: H \otimes A \to A$}}
\graybox{0}{0}{30}{30}{}%
\thicklines
\lineseg{0}{20}{10}{20}
{\Hcolor\lineseg{0}{10}{10}{10}}%
\lineseg{15}{15}{30}{15}
\greenfbox{10}{7.5}{10}{15}{$a$}%
\put(-5,20){\makebox(0,0){\scriptsize$A$}}
\put(-5,10){\makebox(0,0){\scriptsize$H$}}
\put(35,15){\makebox(0,0){\scriptsize$A$}}
\end{picture} \]
such that the following equalities hold: 
 \begin{equation}\label{moduleeq}\begin{gathered}
\unitlength=1pt
\begin{picture}(50,30)
\graybox{0}{0}{50}{30}{}%
\thicklines
\lineseg{0}{20}{30}{20}
{\Hcolor\lineseg{20}{10}{30}{10}}%
\lineseg{40}{15}{50}{15}
\mulunit{10}{5}%
\greenfbox{30}{7.5}{10}{15}{$a$}%
\end{picture}
\begin{picture}(20,30)
\put(10,15){\makebox(0,0){$=$}}
\end{picture}
\begin{picture}(50,30)
\graybox{0}{0}{50}{30}{}%
\thicklines
\lineseg{0}{15}{50}{15}
\end{picture}
\quad \quad \quad \quad \quad 
\begin{picture}(50,30)
\graybox{0}{0}{50}{30}{}%
\thicklines
\lineseg{0}{25}{30}{20}
{\Hcolor
\lineseg{0}{15}{10}{15}
\lineseg{0}{5}{10}{5}
\lineseg{20}{10}{30}{10}
}%
\lineseg{40}{15}{50}{15}
\mul{10}{5}%
\greenfbox{30}{7.5}{10}{15}{$a$}%
\end{picture}
\begin{picture}(20,30)
\put(10,15){\makebox(0,0){$=$}}
\end{picture}
\begin{picture}(50,30)
\graybox{0}{0}{50}{30}{}%
\thicklines
\lineseg{0}{25}{10}{25}
{\Hcolor
\lineseg{0}{15}{10}{15}
\lineseg{0}{5}{30}{7.5}
}%
\lineseg{20}{20}{30}{17.5}
\lineseg{40}{12.5}{50}{12.5}
\greenfbox{10}{12.5}{10}{15}{$a$}%
\greenfbox{30}{5}{10}{15}{$a$}%
\end{picture}
 \end{gathered} \end{equation}
If $(A,a)$ and $(B,b)$ are $H$-modules, then a \textbf{$H$-module morphism} $f: (A,a) \to (B,b)$ is a map $f: A \to B$ such that the following equality holds: 
 \begin{equation}\label{modulemapeq}\begin{gathered}
\unitlength=1.2pt
\begin{picture}(50,30)
\graybox{0}{0}{50}{30}{}%
\thicklines
\lineseg{0}{20}{10}{20}
{\Hcolor\lineseg{0}{10}{10}{10}}%
\lineseg{20}{15}{30}{15}
\lineseg{40}{15}{50}{15}
\violetfbox{30}{10}{10}{10}{$f$}%
\greenfbox{10}{7.5}{10}{15}{$a$}%
\end{picture}
\begin{picture}(20,30)
\put(10,15){\makebox(0,0){$=$}}
\end{picture}
\begin{picture}(50,30)
\graybox{0}{0}{50}{30}{}%
\thicklines
\lineseg{0}{20}{30}{20}
{\Hcolor\lineseg{0}{10}{30}{10}}%
\lineseg{40}{15}{50}{15}
\violetfbox{10}{15}{10}{10}{$f$}%
\yellowfbox{30}{7.5}{10}{15}{$b$}%
\end{picture}
 \end{gathered} \end{equation} 
Let $\mathbf{MOD}(H)$ be the category whose objects are $H$-modules and whose maps are $H$-module morphisms. 
\end{definition}

\begin{lemma} \cite[Example 7.5.4]{turaev2017monoidal} Let $(H, \nabla, u, \Delta, e, S)$ be a cocommutative Hopf monoid in a symmetric monoidal category $(\mathbb{X}, \otimes, I, \alpha, \lambda, \rho, \sigma)$. Then for the induced symmetric Hopf monad $(H \otimes -, \mu^\nabla, \eta^u, m^\Delta, m_I^e)$, the Eilenberg-Moore category of $(H \otimes -, \mu^\nabla, \eta^u)$ is precisely the category of $H$-modules, that is, $\mathbf{MOD}(H) = \mathbb{X}^{H\otimes -}$. Therefore $\left(\mathsf{MOD}(H), \otimes^{\Delta}, \big(
I,
\unitlength=.8pt
\begin{picture}(25,14)(0,2)
\graybox{0}{0}{25}{14}{}%
\thicklines
{\Hcolor\lineseg{15}{7}{25}{7}}%
\comulunit{5}{2}%
\end{picture}
\big), \alpha^H, \lambda^H, \rho^H, \sigma^H \right)$ is a symmetric monoidal category where:
\[\left(\mathsf{MOD}(H), \otimes^{\Delta}, \big(
I,
\unitlength=.8pt
\begin{picture}(25,14)(0,2)
\graybox{0}{0}{25}{14}{}%
\thicklines
{\Hcolor\lineseg{15}{7}{25}{7}}%
\comulunit{5}{2}%
\end{picture}
\big), \alpha^H, \lambda^H, \rho^H, \sigma^H \right) \!:=\! \left(\mathbb{X}^{H \otimes -}, \otimes^{H\otimes -}, (I, m^e_I), \alpha^{H\otimes -}, \lambda^{H\otimes -}, \rho^{H\otimes -}, \sigma^{H\otimes -} \right)\]
where the right-hand side is defined as in Proposition \ref{EMTSMC}. In particular, for $H$-modules $(A,a)$ and $(B,b)$, the action of the $H$-module $(A,a) \otimes^{H} (B,b)$ is drawn as follows in the graphical calculus: 
\[ \left( \begin{array}[c]{c} A \end{array}, \begin{array}[c]{c} \unitlength=1pt
\begin{picture}(30,30)
\graybox{0}{0}{30}{30}{}%
\thicklines
\lineseg{0}{20}{10}{20}
\lineseg{0}{10}{10}{10}
{\Hcolor\lineseg{15}{15}{30}{15}}%
\greenfbox{10}{7.5}{10}{15}{$a$}%
\end{picture} \end{array} \right) \otimes^H \left(\begin{array}[c]{c} B  \end{array}, \begin{array}[c]{c} \unitlength=1pt
\begin{picture}(30,30)
\graybox{0}{0}{30}{30}{}%
\thicklines
\lineseg{0}{20}{10}{20}
{\Hcolor\lineseg{0}{10}{10}{10}}%
\lineseg{15}{15}{30}{15}
\yellowfbox{10}{7.5}{10}{15}{$b$}%
\end{picture}  \end{array}\right) = \left(\begin{array}[c]{c} A \otimes B  \end{array}, 
\begin{array}[c]{c} \unitlength=1.2pt
\begin{picture}(60,50)
\graybox{0}{0}{60}{50}{}%
\thicklines
\lineseg{0}{40}{40}{40}
\lineseg{0}{30}{25}{30}
\lineseg{25}{30}{35}{20}
\lineseg{35}{20}{40}{20}
\lineseg{50}{35}{60}{35}
\lineseg{50}{15}{60}{15}
{\Hcolor
\lineseg{0}{15}{10}{15}
\lineseg{20}{20}{25}{20}
\lineseg{20}{10}{40}{10}
\lineseg{25}{20}{35}{30}
\lineseg{35}{30}{40}{30}
}%
\comul{10}{10}%
\yellowfbox{40}{27.5}{10}{15}{$b$}%
\greenfbox{40}{7.5}{10}{15}{$a$}%
\end{picture}  \end{array}\right) \] 
\end{lemma}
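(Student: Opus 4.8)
The plan is to recognize that this statement asserts no genuinely new structure: it is an identification of two categories together with a transport of the symmetric monoidal structure already produced for symmetric bimonads. Accordingly the proof is a matter of unfolding definitions and assembling a dictionary, and I would organize it in three steps.

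First I would establish the equality $\mathbf{MOD}(H)=\mathbb{X}^{H\otimes-}$ on the nose by comparing axioms. A $(H\otimes-)$-algebra is a pair $(A,a)$ with $a: H\otimes A\to A$ satisfying (\ref{Talg}). Substituting the explicit monad unit $\eta^u_A=(u\otimes 1_A)\circ\lambda^{-1}_A$ and monad multiplication $\mu^\nabla_A=(\nabla\otimes 1_A)\circ\alpha_{H,H,A}$ read off from Lemma \ref{hopfmonoidhopfmonad}, together with $T(a)=1_H\otimes a$, the unit law $a\circ\eta^u_A=1_A$ becomes precisely the left-unit module axiom and the associativity law $a\circ\mu^\nabla_A=a\circ(1_H\otimes a)$ becomes precisely the associativity module axiom of (\ref{moduleeq}). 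Likewise the algebra-morphism condition (\ref{Talgmap}), which here reads $f\circ a=b\circ(1_H\otimes f)$, is literally the module-morphism condition (\ref{modulemapeq}). Hence the objects and the morphisms of the two categories coincide, which gives the asserted equality.

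Second, the symmetric monoidal structure is inherited for free. By Moerdijk's theorem, whose content is the monoidal part of Proposition \ref{EMTSMC} (the trace plays no role here), the Eilenberg-Moore category of the symmetric bimonad $(H\otimes-,\mu^\nabla,\eta^u,m^\Delta,m^e_I)$ carries a symmetric monoidal structure with tensor $(A,a)\otimes^{H\otimes-}(B,b)=(A\otimes B,(a\otimes b)\circ m^\Delta_{A,B})$, unit $(I,m^e_I)$, and structure isomorphisms $\alpha,\ell,\rho,\sigma$ taken directly from $\mathbb{X}$. Renaming these data with the $H$-superscripts is exactly the claimed identity of symmetric monoidal categories, so beyond this substitution nothing further is needed at this stage.

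Finally I would verify the displayed graphical formula for the action of $(A,a)\otimes^H(B,b)$ by expanding $(a\otimes b)\circ m^\Delta_{A,B}$ using the string diagram for $m^\Delta$ in Lemma \ref{hopfmonoidhopfmonad}: that diagram first comultiplies the incoming copy of $H$ via $\Delta$ and routes the two resulting copies, with one symmetry crossing, so that one lands on the $A$-wire and the other on the $B$-wire; post-composing with $a\otimes b$ then feeds these copies into the two actions, which is exactly the drawn diagram. The main obstacle is purely bookkeeping, namely keeping track of the associators, unitors, and the single symmetry that the graphical calculus suppresses, rather than any conceptual difficulty. It is worth noting that cocommutativity of $\Delta$ and the antipode $S$ are not used in this lemma at all; they are needed only for the symmetric and Hopf structure already supplied by Lemma \ref{hopfmonoidhopfmonad}, so the present statement is really the bimonoid-level reduction underlying all of those enhancements.
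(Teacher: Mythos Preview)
Your proposal is correct and well-organized. Note, however, that the paper does not actually supply its own proof of this lemma: it is stated with a citation to \cite[Example 7.5.4]{turaev2017monoidal} and no proof environment follows, so there is no in-paper argument to compare against. Your three-step unfolding---matching the $T$-algebra axioms to the module axioms, invoking Moerdijk's result (the monoidal part of Proposition \ref{EMTSMC}) to transport the symmetric monoidal structure, and reading off the tensor action from the explicit $m^\Delta$---is exactly the standard verification one would expect, and your closing remark that cocommutativity and the antipode play no role in this particular lemma is a useful observation.
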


Therefore, we conclude that the category of modules of a cocommutative Hopf monoid will be a traced symmetric monoidal category (if the base category is also), which to the knowledge of the authors is a novel result. 

\begin{corollary} \label{cor:tracemodules} Let $(\mathbb{X}, \otimes, I, \alpha, \lambda, \rho, \sigma, \mathsf{Tr})$ be a traced symmetric monoidal category and \\$(H, \nabla, u, \Delta, e, S)$ be a cocommutative Hopf monoid in the underlying symmetric monoidal category $(\mathbb{X}, \otimes, I, \alpha, \lambda, \rho, \sigma)$. Then the category of modules over $H$ is a traced symmetric monoidal category, that is, $\left(\mathsf{MOD}(H), \otimes^{\Delta}, \big(
I,
\unitlength=.8pt
\begin{picture}(25,14)(0,2)
\graybox{0}{0}{25}{14}{}%
\thicklines
{\Hcolor\lineseg{15}{7}{25}{7}}%
\comulunit{5}{2}%
\end{picture}
\big), \alpha^H, \lambda^H, \rho^H, \sigma^H, \mathsf{Tr}^H \right)$ is a traced symmetric monoidal category where $\mathsf{Tr}^H(-) = \mathsf{Tr}(-)$. In other words, for an $H$-module morphism 
\[f: (A,a) \otimes^{H \otimes -} (X,x) \to (B,a) \otimes^{H \otimes -} (X,x)\] 
its trace $\mathsf{Tr}^X_{A,B}(f): (A,a) \to (B,b)$ is also an $H$-module morphism. Explicitly, if the equality on the left holds then the equality on the right holds: 
\[\begin{array}[c]{c} \setlength\unitlength{1pt}
\begin{picture}(90,50)
\rgbbox{0}{0}{90}{50}{.9}{.9}{.9}{}%
\thicklines
\comul{10}{10}%
\redfbox{40}{30}{10}{10}{$x$}%
\greenfbox{40}{10}{10}{10}{$a$}%
\violetfbox{70}{10}{10}{30}{$f$}%
\put(0,40){\line(1,0){40}}
\qbezier(0,30)(0,30)(40,20)
{\Hcolor
\put(0,15){\line(1,0){10}}
\qbezier(20,20)(20,20)(40,30)
\put(20,10){\line(1,0){20}}
}%
\put(50,35){\line(1,0){20}}
\put(50,15){\line(1,0){20}}
\put(80,35){\line(1,0){10}}
\put(80,15){\line(1,0){10}}
\end{picture}
\begin{picture}(20,50)
\put(10,25){\makebox(0,0){$=$}}
\end{picture}
\begin{picture}(90,50)
\rgbbox{0}{0}{90}{50}{.9}{.9}{.9}{}%
\thicklines
\violetfbox{10}{15}{10}{30}{$f$}%
\comul{40}{5}%
\redfbox{70}{30}{10}{10}{$x$}%
\yellowfbox{70}{10}{10}{10}{$b$}%
\put(0,40){\line(1,0){10}}
\put(0,20){\line(1,0){10}}
{\Hcolor
\put(0,10){\line(1,0){40}}
}%
\put(20,40){\line(1,0){50}}
\put(20,20){\line(1,0){50}}
{\Hcolor
\qbezier(50,15)(50,15)(70,30)
\qbezier(50,5)(50,5)(70,10)
}%
\put(80,35){\line(1,0){10}}
\put(80,15){\line(1,0){10}}
\end{picture}\end{array} \Longrightarrow \begin{array}[c]{c} \setlength\unitlength{1pt}
\begin{picture}(70,50)
\rgbbox{0}{0}{70}{50}{.9}{.9}{.9}{}%
\thicklines
\greenfbox{10}{10}{10}{10}{$a$}%
\trace{40}{40}{10}{5}
\violetfbox{40}{10}{10}{30}{$f$}%
\put(0,20){\line(1,0){10}}
{\Hcolor 
\put(0,10){\line(1,0){10}}
}%
\put(20,15){\line(1,0){20}}
\put(50,15){\line(1,0){20}}
\end{picture}
\begin{picture}(20,50)
\put(10,25){\makebox(0,0){$=$}}
\end{picture}
\begin{picture}(70,50)
\rgbbox{0}{0}{70}{50}{.9}{.9}{.9}{}%
\thicklines
\yellowfbox{50}{5}{10}{10}{$b$}%
\trace{20}{40}{10}{5}
\violetfbox{20}{10}{10}{30}{$f$}%
\put(0,15){\line(1,0){20}}
{\Hcolor
\put(0,5){\line(1,0){50}}
}%
\put(30,15){\line(1,0){20}}
\put(60,10){\line(1,0){10}}
\end{picture}\end{array} \] 
\end{corollary}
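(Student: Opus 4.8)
The plan is to assemble the corollary from results already in hand, since all the real work has been done in Proposition~\ref{prop:hopfalgtrace}. First I would invoke Proposition~\ref{prop:hopfalgtrace} to conclude that the induced symmetric Hopf monad $(H \otimes -, \mu^\nabla, \eta^u, m^\Delta, m_I^e)$ is trace-coherent, and then Theorem~\ref{mainthm} to upgrade this to the statement that it is a traced monad on $(\mathbb{X}, \otimes, I, \alpha, \lambda, \rho, \sigma, \mathsf{Tr})$. Next, Proposition~\ref{EMTSMC} applied to this traced monad gives that the Eilenberg-Moore category $\mathbb{X}^{H \otimes -}$ is a traced symmetric monoidal category, with trace operator simply inherited from $\mathbb{X}$ (that is, $\mathsf{Tr}^{H\otimes -}(f) = \mathsf{Tr}(f)$ on underlying maps), and with the forgetful functor strictly preserving all of the traced symmetric monoidal structure.

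The last ingredient is the preceding lemma, which identifies $\mathbf{MOD}(H)$ with $\mathbb{X}^{H \otimes -}$ as symmetric monoidal categories (with the monoidal unit, associator, unitors, symmetry, and tensor product on modules all given by the Eilenberg-Moore data of Proposition~\ref{EMTSMC}). Transporting the trace operator $\mathsf{Tr}^{H\otimes -}$ across this identification equips $\mathbf{MOD}(H)$ with exactly the claimed octuple $\left(\mathbf{MOD}(H), \otimes^\Delta, (I, m^e_I), \alpha^H, \ell^H, \rho^H, \sigma^H, \mathsf{Tr}^H\right)$, where $\mathsf{Tr}^H(-) = \mathsf{Tr}(-)$; the traced symmetric monoidal axioms hold because they held in $\mathbb{X}$ and are preserved along the identification.

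The only point requiring a small check --- and the one I would spell out in the proof --- is that the displayed graphical equation in the statement is precisely the $H \otimes -$ instance of the traced-monad condition~(\ref{tracedmonad}): one unfolds the $(H\otimes -)$-algebra structure of $(A,a) \otimes^{H\otimes -} (X,x)$ and $(B,b) \otimes^{H\otimes -} (X,x)$ using the graphical form of $m^\Delta$ (so the $H$-wire is duplicated by $\Delta$ and then fed into the two actions $a$ and $x$), and reads off that the hypothesis and conclusion of~(\ref{tracedmonad}) become the two pictures shown. I do not expect any obstacle here: it is a routine translation between the algebraic and string-diagram presentations, with no new computation beyond what Proposition~\ref{prop:hopfalgtrace} already supplies. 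In other words, the content of this corollary is entirely carried by Proposition~\ref{prop:hopfalgtrace}, and the proof proper is a few lines of bookkeeping.
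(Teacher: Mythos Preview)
Your proposal is correct and matches the paper's intended argument: the corollary is stated without proof precisely because it follows immediately from Proposition~\ref{prop:hopfalgtrace} (which already concludes that $(H\otimes -,\mu^\nabla,\eta^u,m^\Delta,m_I^e)$ is a traced monad), Proposition~\ref{EMTSMC}, and the lemma identifying $\mathbf{MOD}(H)$ with $\mathbb{X}^{H\otimes -}$. The only redundancy is your separate appeal to Theorem~\ref{mainthm}, since Proposition~\ref{prop:hopfalgtrace} already records the traced-monad conclusion.
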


\section{Compact Closed Categories} \label{sec:compactclosed}

An important class of traced symmetric monoidal categories are \emph{compact closed categories}, which are particularly important in categorical quantum foundations \cite{abramsky2004categorical}. Compact closed categories are symmetric monoidal categories where every object has a dual. Every compact closed category comes equipped with a unique trace operator that captures the classical notion of (partial) trace for matrices, which is a fundamental operation for both classical quantum theory and categorical quantum foundations \cite{abramsky2009abstract}.  

\begin{definition} \label{compactex} \cite[Section 4.8]{selinger2010survey} A \textbf{compact closed category} is a symmetric monoidal category $(\mathbb{X}, \otimes, I, \alpha, \lambda, \rho, \sigma)$ such that for every object $X$, there is an object $X^\ast$, called the dual of $X$, and maps $\cup_X: X^\ast \otimes X \to I$, called the cup or evaluation map, and $\cap_X: I \to X \otimes X^\ast$, called the cap or coevaluation map, which are drawn in the graphical calculus as follows: 
\begin{align*}
\unitlength=1.2pt \begin{picture}(30,30)
\put(15,25){\makebox(0,0){$\cup_X: X^*\otimes X\rightarrow I$}}
\graybox{0}{0}{30}{20}{}%
\thicklines
\lineseg{0}{15}{10}{15}
\lineseg{0}{5}{10}{5}
\compactcounit{10}{5}%
\end{picture} && 
    \unitlength=1.2pt
\begin{picture}(30,30)
\put(15,25){\makebox(0,0){$\cap_X:I\rightarrow X\otimes X^*$}}
\graybox{0}{0}{30}{20}{}%
\thicklines
\lineseg{20}{15}{30}{15}
\lineseg{20}{5}{30}{5}
\compactunit{10}{5}%
\end{picture}  
\end{align*}
such that the following equality holds:
\begin{align*}
    \unitlength=1.2pt
\begin{picture}(50,30)
\graybox{0}{0}{50}{30}{}%
\thicklines
\lineseg{0}{25}{30}{25}
\lineseg{20}{15}{30}{15}
\lineseg{20}{5}{50}{5}
\compactunit{10}{5}%
\compactcounit{30}{15}%
\end{picture}
\begin{picture}(20,30)
\put(10,15){\makebox(0,0){$=$}}
\end{picture}
\begin{picture}(50,30)
\graybox{0}{0}{50}{30}{}%
\thicklines
\lineseg{0}{15}{50}{15}
\end{picture}
&&
  \unitlength=1pt
\begin{picture}(50,30)
\graybox{0}{0}{50}{30}{}%
\thicklines
\lineseg{0}{5}{30}{5}
\lineseg{20}{15}{30}{15}
\lineseg{20}{25}{50}{25}
\compactunit{10}{15}%
\compactcounit{30}{5}%
\end{picture}
\begin{picture}(20,30)
\put(10,15){\makebox(0,0){$=$}}
\end{picture}
\begin{picture}(50,30)
\graybox{0}{0}{50}{30}{}%
\thicklines
\lineseg{0}{15}{50}{15}
\end{picture}
\end{align*}
and these equalities are called the snake equations. 
\end{definition}

\begin{proposition} \cite[Proposition 3.1]{joyal_street_verity_1996} Let $(\mathbb{X}, \otimes, I, \alpha, \lambda, \rho, \sigma)$ be a compact closed category, with duals $(-)^\ast$, caps $\cap$, and cups $\cup$. For a map $f: A \otimes X \to B \otimes X$, its trace $\mathsf{Tr}^X_{A,B}(f): A \to B$ is defined as the following composite:
 \begin{equation*}\label{compacttrace}\begin{gathered} 
\begin{array}[c]{c} \mathsf{Tr}^X_{A,B}(f) := \end{array} \begin{array}[c]{c} \xymatrixcolsep{3pc}\xymatrixrowsep{1pc}\xymatrix{ A \ar[r]^-{\rho^{-1}_A} & A \otimes I \ar[r]^-{1_A \otimes \cap_X} & A \otimes (X \otimes X^\ast) \ar[r]^-{\alpha_{A,X,X^\ast}} &\\
(A \otimes X) \otimes X^\ast \ar[r]^-{f \otimes 1_{X^\ast}} & (B \otimes X) \otimes X^\ast \ar[r]^-{\alpha^{-1}_{B,X,X^\ast}} & B \otimes (X \otimes X^\ast) \ar[r]^-{1_B \otimes \sigma_{X,X^\ast}} & \\
B \otimes (X^\ast \otimes X) \ar[r]^-{1_B \otimes \cup_X} & B \otimes I \ar[r]^-{\rho_B} & B  } \end{array}
  \end{gathered}\end{equation*}
  which is drawn in the graphical calculus as follows: 
\begin{align*}
    \unitlength=1.2pt
\begin{picture}(70,50)
\graybox{0}{0}{70}{50}{}%
\thicklines
\trace{20}{35}{30}{5}
\lineseg{0}{15}{70}{15}
\violetfbox{20}{10}{30}{25}{$f$}%
\end{picture}
\begin{picture}(20,50)
\put(10,25){\makebox(0,0){$=$}}
\end{picture}
\begin{picture}(100,50)
\graybox{0}{0}{100}{50}{}%
\thicklines
\lineseg{20}{40}{65}{40}
\lineseg{20}{30}{65}{30}
\lineseg{65}{40}{75}{30}
\lineseg{65}{30}{75}{40}
\lineseg{75}{40}{80}{40}
\lineseg{75}{30}{80}{30}
\compactunit{10}{30}%
\compactcounit{80}{30}%
\lineseg{0}{15}{100}{15}
\violetfbox{30}{10}{30}{25}{$f$}%
\end{picture}
\end{align*}
Then $(\mathbb{X}, \otimes, I, \alpha, \lambda, \rho, \sigma, \mathsf{Tr})$ is a traced symmetric monoidal category. 
Furthermore, $\mathsf{Tr}$ is the \emph{unique} trace operator on $(\mathbb{X}, \otimes, I, \alpha, \lambda, \rho, \sigma)$ \cite[Section 3.2]{hasegawa2009traced}. 
\end{proposition}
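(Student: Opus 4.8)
The plan is to establish two things separately: that the composite displayed in the statement satisfies the five axioms of Definition~\ref{TSMC}, and that no other trace operator exists on this symmetric monoidal structure. I would carry out both arguments in the graphical calculus, using only the snake equations and the naturality of $\cup$ and $\cap$.

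For existence I would check the axioms one at a time. \emph{Tightening} (both halves) is immediate: composing $f$ with $g\otimes 1_X$ or $h\otimes 1_X$ merely slots $g$ (resp.\ $h$) onto the $A$- (resp.\ $B$-) strand, and this slides freely past all the $\rho^{\pm1}$, $\alpha^{\pm1}$, $1\otimes\cap_X$, $1\otimes\cup_X$, $1\otimes\sigma_{X,X^*}$ by naturality. \emph{Sliding} reduces to the characterizing property of the dual, namely that moving a morphism $k\colon X'\to X$ across a cup or cap converts it between its two possible placements; so sliding $k$ from after $f$ to before $f$ is exactly passing it across $\cap_X$. \emph{Superposing} is essentially trivial, since the definition is assembled from $1_A\otimes(-)$ and $1_B\otimes(-)$ components while $\cap_X,\cup_X$ touch only the $X,X^*$ strands. \emph{Yanking} follows from a snake equation: expanding $\mathsf{Tr}^X_{X,X}(\sigma_{X,X})$ gives $\rho_X\circ(1_X\otimes\cup_X)\circ(1_X\otimes\sigma_{X,X^*})\circ\alpha^{-1}\circ(\sigma_{X,X}\otimes 1_{X^*})\circ\alpha\circ(1_X\otimes\cap_X)\circ\rho_X^{-1}$, and a short chase using one snake equation collapses this to $1_X$. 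The only genuinely computational axiom is \emph{Vanishing}: here I would use the canonical iso $(X\otimes Y)^*\cong Y^*\otimes X^*$, decompose $\cap_{X\otimes Y}$ and $\cup_{X\otimes Y}$ into nested $\cap_X,\cap_Y$ and $\cup_X,\cup_Y$, and reassociate — diagrammatically, a loop around $X\otimes Y$ is a loop around $X$ of a loop around $Y$.

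For uniqueness, let $\mathsf{Tr}'$ be any trace operator on $(\mathbb{X},\otimes,I,\alpha,\lambda,\rho,\sigma)$. The crux is to show that for every $f\colon A\otimes X\to B\otimes X$,
\[ \mathsf{Tr}'^X_{A,B}(f) = \rho_B\circ(1_B\otimes\cup_X)\circ(1_B\otimes\sigma_{X,X^*})\circ\alpha^{-1}\circ(f\otimes 1_{X^*})\circ\alpha\circ(1_A\otimes\cap_X)\circ\rho_A^{-1}. \]
I would prove this by inserting a snake into the traced strand: by a snake equation, $1_X$ factors through $X^*\otimes X$ via a cap followed by a cup, so rewriting $f = f\circ(1_A\otimes 1_X)$ routes the traced $X$-strand through a cap--cup pair; Sliding then moves the cup to the front of $f$, Tightening absorbs the unit isomorphisms, and Yanking (or plain naturality of $\sigma$) disposes of the residual symmetry, leaving precisely the cup--cap composite above. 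Since the right-hand side does not refer to $\mathsf{Tr}'$ and is exactly the canonical definition, any two trace operators coincide.

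The main obstacle I anticipate is the uniqueness step: orchestrating the Sliding/Tightening/Yanking moves so that the traced loop really does collapse to the cup--cap composite, with every associator and unitor cancelling correctly, is delicate on paper even though the graphical calculus makes it visually transparent. The Vanishing axiom in the existence half is the second most technical point, for the same bookkeeping reasons; all the other axioms are one-line diagram observations.
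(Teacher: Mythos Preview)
The paper does not supply its own proof of this proposition: it is stated as a citation to \cite{joyal_street_verity_1996} (for existence) and \cite{hasegawa2009traced} (for uniqueness), with no argument given in the text. So there is nothing in the paper to compare against directly.

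That said, your sketch is the standard route taken in those references. The existence half is exactly as you describe: each axiom reduces to naturality of $\cap,\cup,\sigma,\alpha,\lambda,\rho$ together with the snake equations, with Vanishing requiring the identification $(X\otimes Y)^*\cong Y^*\otimes X^*$ and the attendant decomposition of $\cap_{X\otimes Y},\cup_{X\otimes Y}$. For uniqueness, your plan --- insert a snake on the traced wire, then use Sliding to change the object being traced over, Tightening to pull out the cup/cap pieces, and Yanking to collapse what remains --- is precisely the argument in \cite[Section~3.2]{hasegawa2009traced}; the one point you leave implicit is that along the way one must know $\mathsf{Tr}'^I(g)=\rho_B\circ g\circ\rho_A^{-1}$, which is the (derivable) Vanishing-for-unit fact. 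Your own caveat about the bookkeeping is accurate: the diagrammatic argument is transparent, the symbolic one is tedious but routine.
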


It is already known that symmetric Hopf monads lift compact closed structure, that is, for a symmetric Hopf monad on a compact closed category, its Eilenberg-Moore category is also compact closed such that the forgetful functor preserves the compact closed structure strictly. The algebra structure of the dual is constructed using the inverse of the fusion operator. 

\begin{proposition} \label{EMcompact} \cite[Theorem 3.8]{bruguieres2007hopf} Let $(\mathbb{X}, \otimes, I, \alpha, \lambda, \rho, \sigma)$ be a compact closed category, with duals $(-)^\ast$, caps $\cap$, and cups $\cup$, and let $(T, \mu, \eta, m, m_I)$ be a symmetric Hopf monad on $(\mathbb{X}, \!\otimes, I, \alpha, \lambda, \rho, \sigma)$. Then the Eilenberg-Moore category $\left(\mathbb{X}^T, \otimes^T, (I, m_I), \alpha^T, \lambda^T, \rho^T, \sigma^T \right)$, which is defined as in Proposition \ref{EMTSMC}, is a compact closed category where: 
\begin{enumerate}[{\em (i)}]
\item The dual of a $T$-algebra $(A, a)$ is defined as follows: 
\[ (A,a)^\ast = \left(  \begin{array}[c]{c} A^\ast \end{array},    \begin{array}[c]{c} \xymatrixcolsep{5pc}\xymatrix{ T(A^\ast) \ar[r]^-{\rho^{-1}_{T(A^\ast)}} & T(A^\ast) \otimes I \ar[r]^-{1_{T(A^\ast)} \otimes \cap_A} & \\
T(A^\ast) \otimes (A \otimes A^\ast)\ar[r]^-{\alpha_{T(A^\ast),A,A^\ast}} & \\
\left( T(A^\ast) \otimes A \right) \otimes A^\ast \ar[r]^-{(1_{T(A^\ast)} \otimes \eta_A) \otimes 1_{A^\ast}} & \\
\left( T(A^\ast) \otimes T(A) \right) \otimes A^\ast \ar[r]^-{{\mathsf{h}^l}^{-1}_{A^\ast,A} \otimes 1_{A^\ast}} & \\
T(A^\ast \otimes T(A)) \otimes A^\ast \ar[r]^-{ T(1_{A^\ast} \otimes \eta_A) \otimes 1_{A^\ast}} & \\
T(A^\ast \otimes A) \otimes A^\ast \ar[r]^-{ T(\cap_A) \otimes 1_{A^\ast}} & \\
T(I) \otimes A^\ast \ar[r]^-{ m_I \otimes 1_{A^\ast}} & I \otimes A^\ast  \ar[r]^-{\lambda_{A^\ast}} & A^\ast}\end{array} \right)   \]
\item The cup $\cup^T_{(A,a)}: (A,a)^\ast \otimes^T (A,a) \to (I, m_I)$ is defined as $\cup^T_{(A,a)} = \cup_A$;
\item The cap $\cap^T_{(A,a)}: (I, m_I) \to (A,a) \otimes^T (A,a)^\ast$ is defined as $\cap^T_{(A,a)} = \cap_A$. 
\end{enumerate}
Furthermore, the forgetful functor $U^T\!\!:\!\left(\mathbb{X}^T, \otimes^T, (I, m_I), \alpha^T, \lambda^T, \rho^T, \sigma^T \right) \to (\mathbb{X}, \otimes, I, \alpha, \lambda, \rho, \sigma)$ is a strict compact closed functor, that is, the equalities in Proposition \ref{EMTSMC} hold and $U^T( (-)^\ast ) = U^T( - )^\ast$, ${U^T(\cup^T_{-}) = \cup_{U^T(-)}}$, and $U^T(\cap^T_{-}) = \cap_{U^T(-)}$.  
\end{proposition}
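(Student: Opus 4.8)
The plan is to build on the symmetric monoidal structure of $\mathbb{X}^T$ that is already available: a symmetric Hopf monad is in particular a symmetric bimonad, so by \cite[Proposition 3.2]{moerdijk2002monads} the category $\mathbb{X}^T$ is symmetric monoidal (as in Proposition~\ref{EMTSMC}) and $U^T$ is a strict symmetric monoidal functor. On top of this I would reduce the compact closed claim to three verifications: (1) the displayed map $a^\ast\colon T(A^\ast)\to A^\ast$ is a genuine $T$-algebra structure on $A^\ast$; (2) $\cup_A$ and $\cap_A$ underlie $T$-algebra morphisms of the stated types $(A,a)^\ast\otimes^T(A,a)\to(I,m_I)$ and $(I,m_I)\to(A,a)\otimes^T(A,a)^\ast$; and (3) the snake equations hold in $\mathbb{X}^T$ and $U^T$ satisfies the extra strictness clauses. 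Granting (2), part (3) is essentially free: the clauses $U^T((-)^\ast)=U^T(-)^\ast$, $U^T(\cup^T_-)=\cup_{U^T(-)}$, $U^T(\cap^T_-)=\cap_{U^T(-)}$ hold by the very definitions $(A,a)^\ast=(A^\ast,a^\ast)$, $\cup^T_{(A,a)}=\cup_A$, $\cap^T_{(A,a)}=\cap_A$; and each snake equation in $\mathbb{X}^T$ is an equation between $T$-algebra morphisms built from $\alpha^T,\ell^T,\rho^T,\sigma^T,\cup^T,\cap^T$ whose images under the faithful strict monoidal functor $U^T$ are precisely the two sides of a snake equation in $\mathbb{X}$, hence it holds by faithfulness of $U^T$.

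So the content is concentrated in (1) and (2), which I would carry out in the graphical calculus for symmetric monoidal categories. For (1), I would check the two algebra laws $a^\ast\circ\eta_{A^\ast}=1_{A^\ast}$ and $a^\ast\circ\mu_{A^\ast}=a^\ast\circ T(a^\ast)$. The unit law unwinds using naturality of $\eta$, the comonoidal-unit equations in (\ref{symbimonad}) (i.e.\ $m_{A,B}\circ\eta_{A\otimes B}=\eta_A\otimes\eta_B$ and $m_I\circ\eta_I=1_I$), the fusion identities of \cite[Proposition 2.6]{bruguieres2011hopf} and \cite[Lemma 4.2]{hasegawa2018linear} --- in particular (\ref{h2}) and (\ref{h4}) --- and a single snake equation to cancel the $\cap_A$/$\cup_A$ pair, after which the composite collapses to $1_{A^\ast}$. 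The associativity law is longer and is the crux: it brings in the comonoidal-endofunctor axioms (\ref{smcendo}), the symmetric comonoidal axiom (\ref{smcendo2}), the comonoidal-transformation axioms (\ref{symbimonad}), the monad axioms (\ref{monadeq}), naturality of $m$, $\mu$ and ${\mathsf{h}^l}^{-1}$, and the remaining inverse-fusion identities from \cite[Lemma 4.2]{hasegawa2018linear} such as (\ref{h3}). This is exactly where the Hopf hypothesis is essential: only invertibility of the left fusion operator makes ${\mathsf{h}^l}^{-1}$ --- and hence $a^\ast$ itself --- available, so compactness of the base category alone would not suffice.

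For (2), I would verify that $\cup_A$ is a $T$-algebra morphism, i.e.\ that $m_I\circ T(\cup_A)=\cup_A\circ(a^\ast\otimes a)\circ m_{A^\ast,A}$, and dually that $\cap_A\circ m_I=(a\otimes a^\ast)\circ m_{A,A^\ast}\circ T(\cap_A)$. Substituting the definition of $a^\ast$ into the relevant side, distributing $m$ through the coevaluation and the units by naturality and the comonoidal axioms, and cancelling a cup--cap pair with a snake equation, the identity for $\cup_A$ should collapse to $m_I\circ\mu_I=m_I\circ T(m_I)$ together with the fusion identity (\ref{h1}); the computation for $\cap_A$ is symmetric. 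Once (1) and (2) are in place, $(A^\ast,a^\ast)$ with cup $\cup_A$ and cap $\cap_A$ is a dual of $(A,a)$ in $\mathbb{X}^T$, so every object of $\mathbb{X}^T$ has a dual and $\mathbb{X}^T$ is compact closed; combined with (3) this makes $U^T$ a strict compact closed functor.

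I expect the main obstacle to be the associativity law for $a^\ast$ in step (1): it is the single place where all of the Hopf-monad bookkeeping must be orchestrated at once, and the nested associators, unit isomorphisms, and coherence data packed into the definition of $a^\ast$ make it easy to lose track of terms. One shortcut worth recording is that, in any monoidal category, a dual of an object is unique up to canonical isomorphism whenever it exists; since $U^T$ is faithful and strictly monoidal, any dual of $(A,a)$ in $\mathbb{X}^T$ must have underlying object $A^\ast$ and underlying cup and cap $\cup_A,\cap_A$, and requiring these to be $T$-algebra morphisms pins down $a^\ast$ uniquely. Running that constraint in reverse recovers the displayed formula, which gives an alternative derivation of (1) and (2) once one knows abstractly that a dual exists --- for instance from the autonomous-category case of \cite[Theorem 3.8]{bruguieres2007hopf}, of which this proposition is the symmetric specialization.
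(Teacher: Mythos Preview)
The paper does not prove this proposition: it is stated with the citation \cite[Theorem 3.8]{bruguieres2007hopf} and used as a black box, so there is no ``paper's own proof'' to compare against. Your outline is a correct direct verification --- checking that $a^\ast$ is a $T$-algebra, that $\cup_A$ and $\cap_A$ are $T$-algebra morphisms, and then invoking faithfulness of $U^T$ for the snake equations --- and your identification of the associativity law for $a^\ast$ as the crux is accurate. Amusingly, the ``shortcut'' you record in your final paragraph (deduce existence of duals abstractly from \cite[Theorem 3.8]{bruguieres2007hopf} and then read off the explicit formula from uniqueness) is precisely the route the paper takes, only it stops at the citation and does not spell out the recovery of $a^\ast$.
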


In particular, this implies that the Eilenberg-Moore category of a symmetric Hopf monad on a compact closed category is also a traced symmetric monoidal category. Since the forgetful functor is a strict compact closed functor, it is easy to see that it also preserves the trace operator strictly. Therefore we conclude that symmetric Hopf monads on a compact closed category also lift the traced symmetric monoidal structure to their Eilenberg-Moore categories, and as such are also traced monads.  

\begin{corollary}\label{tracecoherentcompactclosed} Let $(\mathbb{X}, \otimes, I, \alpha, \lambda, \rho, \sigma)$ be a compact closed category and let $(T, \mu, \eta, m, m_I)$ be a symmetric Hopf monad on $(\mathbb{X}, \otimes, I, \alpha, \lambda, \rho, \sigma)$. Then $(T, \mu, \eta, m, m_I)$ is a trace-coherent Hopf monad (and also a traced monad) on $(\mathbb{X}, \otimes, I, \alpha, \lambda, \rho, \sigma, \mathsf{Tr})$, where the trace operator is defined as in Example \ref{compactex}.   
\end{corollary}
\begin{proof} We will show that $(T, \mu, \eta, m, m_I)$ is a traced monad. Let $(X,x)$, $(A,a)$, and $(B,b)$ be $T$-algebras and $f: (A,a) \otimes^T (X,x) \to (B,b) \otimes^T (X,x)$ be a $T$-algebra morphism. By Proposition \ref{EMcompact}, it follows that all the maps in the definition of $\mathsf{Tr}^X_{A,B}(f)$, as defined in Example \ref{compactex}, are $T$-algebra morphisms. Thus since the composite of $T$-algebra morphisms is a $T$-algebra morphism, we have that $\mathsf{Tr}^X_{A,B}(f): (A,a) \to (B,b)$ is a $T$-algebra morphism. Explicitly, $\mathsf{Tr}^X_{A,B}(f)$ as a $T$-algebra morphism is given piece-wise as follows: 
 \[ \begin{array}[c]{c} \mathsf{Tr}^X_{A,B}(f) \end{array} :=\begin{array}[c]{c} \xymatrixcolsep{2.75pc}\xymatrixrowsep{1pc}\xymatrix{ (A, a) \ar[r]^-{\rho^{-1}_A} & (A,a) \otimes^T (I,m_I) \ar[r]^-{1_A \otimes \cup_X} & \\
 (A,a) \otimes^T \left( (X,x) \otimes^T (X,x)^\ast \right) \ar[r]^-{\alpha_{A,X,X^\ast}} & \left( (A,a) \otimes^T (X,x) \right) \otimes^T (X,x)^\ast \ar[r]^-{f \otimes 1_{X^\ast}} & \\
\left( (B,b) \otimes^T (X,x) \right) \otimes^T (X,x)^\ast \ar[r]^-{\alpha^{-1}_{B,X,X^\ast}} & (B,b) \otimes^T \left((X,x) \otimes^T (X,x)^\ast \right) \ar[r]^-{1_B \otimes \sigma_{X,X^\ast}} &\\
(B,b) \otimes^T \left((X,x)^\ast \otimes (X,x)\right) \ar[r]^-{1_B \otimes \cup_X} &\\
(B,b) \otimes^T (I,m_I) \ar[r]^-{\rho_B} & (B,b)  } \end{array} \]
So we conclude that $(T, \mu, \eta, m, m_I)$ is a traced monad. Thus by Theorem \ref{mainthm}, $(T, \mu, \eta, m, m_I)$ is a trace-coherent Hopf monad. 
\end{proof}

\section{Traced Cartesian Monoidal Categories}\label{carttracesec}

Any category with finite products is a symmetric monoidal category, called a Cartesian monoidal category. In a traced Cartesian monoidal category, the trace operator captures the notion of feedback via fixed points. In fact, for Cartesian monoidal categories, trace operators are in bijective correspondence with Conway operators, which are special kinds of fixed point operators (which is a result proved by Hyland and the first named author independently \cite{Hasegawa97recursionfrom}). In this section, we will show how for traced Cartesian monoidal categories, trace-coherent Hopf monads can also be characterized in terms of the Conway operator.

There are multiple equivalent ways of defining a category with finite products. Since we are interested in their induced symmetric monoidal structure, we will define a category with finite products from this perspective, that is, as a symmetric monoidal whose monoidal product is a product and whose monoidal unit is a terminal object. Of course, any category with finite products is a Cartesian monoidal category, where the symmetric monoidal structure is derived using the universal property of the product, and conversely, every Cartesian monoidal category is a category with finite products. Alternatively, a Cartesian monoidal category can be defined as a symmetric monoidal category with natural copy and delete maps satisfying the axioms found in \cite[Table 7]{selinger2010survey}. 

\begin{definition} \label{cartmondef}\cite[Section 6.1 \& Section 6.4]{selinger2010survey} A \textbf{Cartesian monoidal category} is a symmetric monoidal category $(\mathbb{X}, \times, \top, \alpha, \lambda, \rho, \sigma)$ whose monoidal structure is a given by \textbf{finite products}, that is: 
\begin{enumerate}[{\em (i)}]
\item The monoidal unit $\top$ is a \textbf{terminal object}, that is, for every object $A$ there exists a \emph{unique map} $t_A: A \to \top$;
\item For every pair of objects $A$ and $B$, $A \times B$ a \textbf{product} of $A$ and $B$ with \textbf{projection} maps $\pi_0: A \times B \to A$ and $\pi_1: A \times B \to B$ defined as following composites: 
\begin{align*}
\pi_0 :=   \xymatrixcolsep{3pc}\xymatrix{ A \times B \ar[r]^-{1_A \times t_B} & A \times \top \ar[r]^-{\rho_A} & A } &&   \pi_1 :=   \xymatrixcolsep{3pc}\xymatrix{ A \times B \ar[r]^-{t_A \times 1_B} & \top \times B \ar[r]^-{\lambda_B} & B } 
\end{align*}
that is, for every pair of maps $f_0: C \to A$ and $f_1: C \to B$, there is a \emph{unique} map ${\langle f_0, f_1 \rangle: C \to A \times B}$, called the \textbf{pairing} of $f_0$ and $f_1$, such that:
\begin{align}
    \pi_0 \circ \langle f_0, f_1 \rangle = f_0 && \pi_1 \circ \langle f_0, f_1 \rangle = f_1
\end{align}
\end{enumerate} 
A \textbf{traced Cartesian monoidal category} is a traced symmetric monoidal category \\
$(\mathbb{X}, \times, \top, \alpha, \lambda, \rho, \sigma, \mathsf{Tr})$ whose underlying symmetric monoidal category $(\mathbb{X}, \times, \top, \alpha, \lambda, \rho, \sigma)$ is a\\
Cartesian monoidal category. 
\end{definition}

Traced Cartesian monoidal categories can equivalently be defined as a Cartesian monoidal category equipped with a Conway operator. Here we provide the Conway operator axiomatization found in \cite{Hasegawa97recursionfrom}, but equivalent alternative axiomatizations can be found in \cite{hasegawa2004uniformity,simpson2000complete}. 

\begin{definition}\cite[Theorem 3.1]{Hasegawa97recursionfrom} A \textbf{Conway operator} on a Cartesian monoidal category \\
$(\mathbb{X}, \times, \top, \alpha, \lambda, \rho, \sigma)$ is a family of operators $\mathsf{Fix}^X_A: \mathbb{X}(A \times X, X) \to \mathbb{X}(A,X)$ such that: 
\begin{description} 
\item[\textbf{[Parametrized Fixed Point]}] For every map $f: A \times X \to X$ the following equality holds: 
 \begin{equation}\label{fixedpoint}\begin{gathered}
 \mathsf{Fix}^X_A(f) = f \circ \left \langle 1_A, \mathsf{Fix}^X_A(f) \right \rangle 
   \end{gathered}\end  {equation}
   \item[\textbf{[Naturality]}] For every map $f: A \times X \to X$ and every map $g: A^\prime \to A$ the following equality holds: 
 \begin{equation}\label{fpnat1}\begin{gathered}
 \mathsf{Fix}^X_A\left( f \circ (g \times 1_X) \right) =  \mathsf{Fix}^X_{A^\prime}\left( f \right) \circ g
   \end{gathered}\end  {equation}
   and for every map $f: A \times X \to X^\prime$ and every map $k: X^\prime \to X$ the following equality holds:
    \begin{equation}\label{fpnat2}\begin{gathered}
 \mathsf{Fix}^X_A\left( k \circ f \right) =  k \circ \mathsf{Fix}^{X^\prime}_{A}\left( f \circ (1_A \times k) \right) 
   \end{gathered}\end  {equation}
   \item[\textbf{[Beki\v{c} Lemma]}] For every map $f: A \times (X \times Y) \to X$ and $g: A \times (X \times Y) \to Y$ the following equality holds: 
 \begin{equation}\label{Bekic}\begin{gathered}
 \mathsf{Fix}^{X \times Y}_A(\langle f, g \rangle) =\\ \langle \pi_1, \mathsf{Fix}^{Y}_{A \times X}(g \circ \alpha^{-1}_{A,X,Y}) \rangle  \circ \left \langle 1_A, \mathsf{Fix}^{X}_{A}\left( f \circ \alpha^{-1}_{A,X,Y} \circ  \langle 1_{A \times X}, \mathsf{Fix}^{Y}_{A \times X}(g \circ \alpha^{-1}_{A,X,Y}) \rangle \right)   \right \rangle
   \end{gathered}\end  {equation}
\end{description}
For a map $f: A \times X \to X$, $\mathsf{Fix}^X_A(f): A \to X$ is called the \textbf{parametrized fixed point} of $f$. 
\end{definition}

\begin{proposition}\cite[Theorem 3.1]{Hasegawa97recursionfrom} \label{fixtraceprop} Let $(\mathbb{X}, \times, \top, \alpha, \lambda, \rho, \sigma)$ be a Cartesian monoidal category: 
\begin{enumerate}[{\em (i)}]
\item Let $\mathsf{Fix}$ be a Conway operator on $(\mathbb{X}, \times, \top, \alpha, \lambda, \rho, \sigma)$. Then for a map $f: A \times X \to B \times X$, its trace ${\mathsf{Tr}^X_{A,B}(f): A \to B}$ is defined as follows:
 \[ \mathsf{Tr}^X_{A,B}(f) := \xymatrixcolsep{5pc}\xymatrixrowsep{1pc}\xymatrix{ A \ar[r]^-{\langle 1_A, \mathsf{Fix}^X_A(\pi_1 \circ f) \rangle} & A \times X \ar[r]^-{f} & B \times X \ar[r]^-{\pi_0} &  B } \]
Then $(\mathbb{X}, \times, \top, \alpha, \lambda, \rho, \sigma, \mathsf{Tr})$ is a traced Cartesian monoidal category.
\item Let $(\mathbb{X}, \times, \top, \alpha, \lambda, \rho, \sigma, \mathsf{Tr})$ be a Cartesian monoidal category. For a map $f: A \times X \to X$, its parametrized fixed point $\mathsf{Fix}^X_{A}(f): A \to X$ is defined as follows:
\[\mathsf{Fix}^X_{A}(f) := \mathsf{Tr}^X_{A,X}(\langle f, f \rangle)  \]
Then $\mathsf{Fix}$ is a Conway operator on $(\mathbb{X}, \times, \top, \alpha, \lambda, \rho, \sigma)$.  
\end{enumerate}
Furthermore, these constructions are inverses of each other. 
\end{proposition}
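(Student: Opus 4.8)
The plan is to treat the two constructions of parts (i) and (ii) separately and then to check that they are mutually inverse. The method is uniform in both directions: unfold the definition and translate each axiom of the target structure into axioms of the source structure. Nearly all of these translations are routine manipulations, best carried out in the graphical calculus, so below I only indicate which trace axioms feed which Conway axioms and vice versa, and then concentrate on the one identity that is genuinely delicate.

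For part (i), suppose $\mathsf{Fix}$ is a Conway operator and set $\mathsf{Tr}^X_{A,B}(f) = \pi_0 \circ f \circ \langle 1_A, \mathsf{Fix}^X_A(\pi_1 \circ f)\rangle$. I would then verify the six axioms of Definition~\ref{TSMC}. Tightening in the domain~(\ref{tightening1}) follows from the Naturality axiom~(\ref{fpnat1}) of $\mathsf{Fix}$ after moving $g$ across the pairing via the universal property of the product; Tightening in the codomain~(\ref{tightening2}) is just naturality of the projections; Sliding~(\ref{sliding}) is the trace form of the dinaturality-type Naturality axiom~(\ref{fpnat2}); Superposing~(\ref{superposing}) comes from~(\ref{fpnat1}) after chasing the associators and projections; and Vanishing~(\ref{vanishing1}) is exactly the Beki\v{c} Lemma~(\ref{Bekic}) read through the definition, with the nested trace on the right of~(\ref{vanishing1}) matching the nested fixpoints on the right of~(\ref{Bekic}). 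Yanking~(\ref{yanking}) is a one-line computation: since $\pi_1 \circ \sigma_{X,X} = \pi_0$, the fixed-point property~(\ref{fixedpoint}) gives $\mathsf{Fix}^X_X(\pi_1 \circ \sigma_{X,X}) = \mathsf{Fix}^X_X(\pi_0) = 1_X$, so $\mathsf{Tr}^X_{X,X}(\sigma_{X,X}) = \pi_0 \circ \sigma_{X,X} \circ \langle 1_X, 1_X\rangle = \pi_1 \circ \langle 1_X, 1_X\rangle = 1_X$.

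For part (ii), given a trace operator I would set $\mathsf{Fix}^X_A(f) = \mathsf{Tr}^X_{A,X}(\langle f, f\rangle)$ and verify the Conway axioms. The Parametrized Fixed Point axiom~(\ref{fixedpoint}) is obtained from Sliding and Yanking, unwinding one turn of the loop built by the diagonal $\langle f, f\rangle$. The two Naturality axioms~(\ref{fpnat1}) and~(\ref{fpnat2}) follow from Tightening~(\ref{tightening1}) and from Sliding together with Tightening, respectively. The Beki\v{c} Lemma~(\ref{Bekic}) is the least routine of these: after expanding both sides through the definition of $\mathsf{Fix}$ one applies Vanishing~(\ref{vanishing1}) to break the trace over $X \times Y$ into an iterated trace, and then reorganizes the two feedback wires using Superposing~(\ref{superposing}), Tightening, and the universal property of products.

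Finally, for the mutual-inverse claim, one round trip is immediate: starting from $\mathsf{Fix}$, the trace it induces returns $\mathsf{Tr}^X_{A,X}(\langle f, f\rangle) = \pi_0 \circ \langle f, f\rangle \circ \langle 1_A, \mathsf{Fix}^X_A(\pi_1 \circ \langle f, f\rangle)\rangle = f \circ \langle 1_A, \mathsf{Fix}^X_A(f)\rangle = \mathsf{Fix}^X_A(f)$ by~(\ref{fixedpoint}). The other round trip is where I expect the real work to lie: starting from an abstract trace and putting $\mathsf{Fix}^X_A(g) = \mathsf{Tr}^X_{A,X}(\langle g, g\rangle)$, one must prove
\[ \pi_0 \circ f \circ \bigl\langle 1_A, \mathsf{Tr}^X_{A,X}(\langle \pi_1 \circ f, \pi_1 \circ f\rangle)\bigr\rangle = \mathsf{Tr}^X_{A,B}(f) \qquad \text{for every } f : A \times X \to B \times X. \]
My plan here is to write $f = \langle \pi_0 \circ f, \pi_1 \circ f\rangle$, introduce an auxiliary copy of the feedback wire so that the single branch $\pi_1 \circ f$ feeds both the loop and the output line, recognise the resulting diagram as a trace over $X \times X$, collapse it with Vanishing, discard the spurious output branch with Tightening, and eliminate the residual trivial self-loop with Yanking. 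This last argument is the one place where all the trace axioms genuinely interact, and it is the step I expect to demand the most care; everything else in the proof is bookkeeping with the Cartesian structure.
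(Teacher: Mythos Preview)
The paper does not prove this proposition: it is stated with a citation to \cite[Theorem 3.1]{Hasegawa97recursionfrom} and no proof is given. Your outline is essentially the standard argument from that reference, so there is nothing to compare against in the present paper; your axiom-by-axiom correspondence (Tightening $\leftrightarrow$ Naturality, Sliding $\leftrightarrow$ dinaturality, Vanishing $\leftrightarrow$ Beki\v{c}, Yanking via the fixed-point law for $\pi_0$) is the expected one, and your identification of the $\mathsf{Tr}\to\mathsf{Fix}\to\mathsf{Tr}$ round trip as the only nontrivial step is accurate.
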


For Cartesian monoidal categories, every monad has a canonical symmetric bimonad structure and the Eilenberg-Moore category is again a Cartesian monoidal category. So any monad will lift the Cartesian monoidal structure. This is a reformulation of the well-known fact in category theory that finite products lift to Eilenberg-Moore categories. In other words, for a monad on a category with finite products, its Eilenberg-Moore category will also have finite products which are strictly preserved by the forgetful functor \cite[Proposition 4.3.1]{borceux1994handbook}. 

\begin{lemma}\cite[Example 1.4]{moerdijk2002monads} \label{cartmonadlem} Let $(\mathbb{X}, \times, \top, \alpha, \lambda, \rho, \sigma)$ be a Cartesian monoidal category and let $(T, \mu, \eta)$ be a monad on the underlying category $\mathbb{X}$. Define the natural transformation $m_{A,B}: T(A \times B) \to T(A) \times T(B)$ and the map $m_\top: T(\top) \to \top$ as follows: 
\begin{align*}
m_{A,B} := \left \langle T(\pi_0), T(\pi_1) \right \rangle  && m_\top := t_\top
\end{align*}
Then $(T, \mu, \eta, m, m_I)$ is a symmetric bimonad on $(\mathbb{X}, \times, \top, \alpha, \lambda, \rho, \sigma)$, and it is the unique symmetric bimonad structure on the monad $(T, \mu, \eta)$. Furthermore, $\left(\mathbb{X}^T, \times^T, (\top, m_\top), \alpha^T, \lambda^T, \rho^T, \sigma^T \right)$, as defined in Proposition \ref{EMTSMC}, is a Cartesian monoidal category where: 
\begin{enumerate}[{\em (i)}]
\item \label{cartmonadlem1} The product of $T$-algebras $(A,a)$ and $(B,b)$ is the $T$-algebra \[(A,a) \times^T (B,b) = \left(A \times B, \langle a \circ T(\pi_0), b \circ T(\pi_1) \rangle \right)\] 
with projection maps defined as:
\begin{align*}
   \pi^T_0 = \pi_0 : (A,a) \times^T (B,b) \to (A,a) && \pi^T_1 = \pi_1: (A,a) \times^T (B,b) \to (B,b)  
\end{align*}
\item \label{cartmonadlem2} The pairing of $T$-algebra morphisms $f: (C,c) \to (A,a)$ and $g: (C,c) \to (B, b)$ is their pairing in the base category $\langle f,g \rangle^T = \langle f, g \rangle: (C,c) \to (A,a) \times (B,b)$. 
\item The terminal object is the $T$-algebra $(\top, m_I) = (\top, t_{T(\top)})$ and where the unique map to the terminal object is $t^T_{(A,a)} = t_A: (A,a) \to (\top, m_I)$. 
\end{enumerate}
\end{lemma}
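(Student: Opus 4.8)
The plan is to establish the three assertions in turn: that $(m,m_I)$ as defined is a symmetric bimonad structure, that it is the unique one, and that the Eilenberg--Moore category is Cartesian with the stated explicit description.

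For the first assertion, the guiding observation is that every comonoidal structure map has codomain a product or the terminal object, so each bimonad axiom can be verified componentwise. From $m_{A,B}=\langle T(\pi_0),T(\pi_1)\rangle$ one has $\pi_0\circ m_{A,B}=T(\pi_0)$ and $\pi_1\circ m_{A,B}=T(\pi_1)$, while $m_I$ is forced to be the unique map $T(\top)\to\top$. The two counit identities of (\ref{smcendo}) then reduce, using that $\rho_A$ is the first projection of $A\times\top$ and $\lambda_A$ the second projection of $\top\times A$, to $T(\rho_A)\circ T(\rho_A^{-1})=1_{T(A)}$; coassociativity follows because postcomposing either side with each of the coordinate projections onto $T(A)$, $T(B)$, $T(C)$ gives $T$ applied to the corresponding projection of $A\times(B\times C)$; the comonoidal-transformation identities for $\mu$ and $\eta$ in (\ref{symbimonad}) involving $m$ follow componentwise from naturality of $\mu$ and $\eta$ together with $\pi_i\circ m_{-,-}=T(\pi_i)$, and the two involving $m_I$ are automatic since $\top$ is terminal; and the symmetry axiom (\ref{smcendo2}) is immediate since both sides equal $\langle T(\pi_1),T(\pi_0)\rangle$. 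This is all routine, and I would not spell it out in detail.

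For uniqueness, let $(m',m_I')$ be any comonoidal endofunctor structure on the monad $(T,\mu,\eta)$. Since $\top$ is terminal, $m_I'=m_I$ automatically. To pin down $m'_{A,B}$ I would postcompose with $\pi_0=\rho_{T(A)}\circ(1_{T(A)}\times t_{T(B)})$, rewrite $t_{T(B)}=m_I'\circ T(t_B)$ (again forced by terminality), use naturality of $m'$ along $t_B\colon B\to\top$ to slide $m'$ past $T(1_A\times t_B)$, and then apply the right-hand counit identity of (\ref{smcendo}) for $m'$; this yields $\pi_0\circ m'_{A,B}=T(\pi_0)$, and symmetrically $\pi_1\circ m'_{A,B}=T(\pi_1)$, so $m'_{A,B}=\langle T(\pi_0),T(\pi_1)\rangle=m_{A,B}$. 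In particular this re-proves that the canonical structure is symmetric, and even that there is at most one comonoidal structure on $T$, not merely at most one symmetric bimonad structure.

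For the Eilenberg--Moore part, the non-traced portion of Proposition \ref{EMTSMC}, due to Moerdijk \cite{moerdijk2002monads}, already gives that $\mathbb{X}^T$ is a symmetric monoidal category with the stated coherence isomorphisms and that $U^T$ is strict monoidal; it therefore remains only to see that $\otimes^T$ is the categorical product and $(\top,m_I)$ the terminal object. The algebra $(A,a)\otimes^T(B,b)$ has underlying object $A\times B$ and structure map $(a\times b)\circ m_{A,B}=\langle a\circ T(\pi_0),b\circ T(\pi_1)\rangle$, which matches the formula in (i). I would then check directly, componentwise as above and using (\ref{Talg}) and (\ref{Talgmap}), that $\pi_0$ and $\pi_1$ are $T$-algebra morphisms out of this algebra, that $t_A$ is a $T$-algebra morphism $(A,a)\to(\top,m_I)$, and that for $T$-algebra morphisms $f\colon(C,c)\to(A,a)$ and $g\colon(C,c)\to(B,b)$ the pairing $\langle f,g\rangle$ is a $T$-algebra morphism $(C,c)\to(A,a)\otimes^T(B,b)$; uniqueness of the pairing and of the map to $(\top,m_I)$ is inherited from $\mathbb{X}$ because $U^T$ is faithful. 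Alternatively one can invoke that $U^T$ creates limits \cite{borceux1994handbook} and then identify the created finite products with $\otimes^T$ using faithfulness and strictness of $U^T$. This establishes (i)--(iii). As for difficulty: nothing here is genuinely hard, it is all universal-property bookkeeping; the only spot needing a little care is the naturality slide in the uniqueness argument, and --- on the second route for the last part --- checking that the abstractly created products literally coincide with Moerdijk's $\otimes^T$ rather than being merely isomorphic to it.
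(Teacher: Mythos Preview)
The paper does not supply its own proof of this lemma: it is stated with a citation to \cite[Example 1.4]{moerdijk2002monads} and immediately followed by the next result, so there is nothing to compare against. Your proposal is a correct and standard verification; the componentwise check of the bimonad axioms, the uniqueness argument via the counit identity of (\ref{smcendo}), and either route you describe for showing $\otimes^T$ is the categorical product are all sound.
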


Traced monads on traced Cartesian monoidal categories can also be characterized in terms of the Conway operator. Explicitly, a traced monad on a traced Cartesian monoidal category is equivalently a symmetric bimonad such that the parametrized fixed point of an algebra morphism is again an algebra morphism.

\begin{lemma} Let $(\mathbb{X}, \times, \top, \alpha, \lambda, \rho, \sigma, \mathsf{Tr})$ be a traced Cartesian monoidal category, with induced Conway operator $\mathsf{Fix}$, as defined in Proposition \ref{fixtraceprop}, and let $(T, \mu, \eta)$ be a monad on the underlying category $\mathbb{X}$. Then the induced symmetric bimonad $(T, \mu, \eta, m, m_I)$ is a traced monad on $(\mathbb{X}, \times, \top, \alpha, \lambda, \rho, \sigma, \mathsf{Tr})$ if and only if for every pair of $T$-algebras $(X,x)$ and $(A,a)$, if 
\[f: (A,a) \times^T (X,x) \to (X,x)\]
is a $T$-algebra morphism, then its parametrized fixed point ${\mathsf{Fix}^X_{A}(f): (A,a) \to (X,x)}$ is a $T$-algebra morphism. Explicitly, if the equality on the left holds, then the equality on the right holds: 
\begin{equation}\label{fixedmonad}\begin{gathered}
f \circ (a \times x) \circ m_{A,X} = x \circ T(f) \Longrightarrow  \mathsf{Fix}^X_{A}(f) \circ a = x \circ T\left( \mathsf{Fix}^X_{A}(f) \right)
\end{gathered}\end  {equation}
\end{lemma}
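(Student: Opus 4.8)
The plan is to prove both implications by relating the parametrized-fixed-point condition (\ref{fixedmonad}) to the traced-monad condition (\ref{tracedmonad}) directly, using the two bijective translations in Proposition \ref{fixtraceprop}, together with the explicit description in Lemma \ref{cartmonadlem} of the bimonad structure $m_{A,B} = \langle T(\pi_0), T(\pi_1)\rangle$ and of the $T$-algebra product $(A,a)\times^T(X,x)$. The key observation is that both translations $\mathsf{Tr}^X_{A,B}(f) = \pi_0 \circ f \circ \langle 1_A, \mathsf{Fix}^X_A(\pi_1 \circ f)\rangle$ and $\mathsf{Fix}^X_A(g) = \mathsf{Tr}^X_{A,X}(\langle g,g\rangle)$ are built entirely from maps ($\pi_0$, $\pi_1$, pairings, identities) which, by Lemma \ref{cartmonadlem}, are automatically $T$-algebra morphisms between the appropriate product algebras. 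So the core of the argument is the standard fact that a composite and a pairing of $T$-algebra morphisms is again a $T$-algebra morphism, applied in both directions.

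First I would prove the ``only if'' direction. Assume $(T,\mu,\eta,m,m_I)$ is a traced monad. Let $(X,x)$, $(A,a)$ be $T$-algebras and suppose $f:(A,a)\times^T(X,x)\to(X,x)$ is a $T$-algebra morphism, i.e.\ the left side of (\ref{fixedmonad}) holds. Then $\langle f,f\rangle:(A,a)\times^T(X,x)\to(X,x)\times^T(X,x)$ is a $T$-algebra morphism by Lemma \ref{cartmonadlem}(\ref{cartmonadlem2}). Since $T$ is a traced monad, $\mathsf{Tr}^X_{A,X}(\langle f,f\rangle):(A,a)\to(X,x)$ is a $T$-algebra morphism; but by Proposition \ref{fixtraceprop}(ii) this map is exactly $\mathsf{Fix}^X_A(f)$, so the right side of (\ref{fixedmonad}) holds.

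Next I would prove the ``if'' direction. Assume the fixed-point condition (\ref{fixedmonad}) holds for all $T$-algebras. Let $(X,x)$, $(A,a)$, $(B,b)$ be $T$-algebras and let $f:(A,a)\times^T(X,x)\to(B,b)\times^T(X,x)$ be a $T$-algebra morphism. By Lemma \ref{cartmonadlem}(\ref{cartmonadlem1}) the projection $\pi_1:(B,b)\times^T(X,x)\to(X,x)$ is a $T$-algebra morphism, so $\pi_1\circ f:(A,a)\times^T(X,x)\to(X,x)$ is one too; hence $\mathsf{Fix}^X_A(\pi_1\circ f):(A,a)\to(X,x)$ is a $T$-algebra morphism by hypothesis. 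Then $\langle 1_A,\mathsf{Fix}^X_A(\pi_1\circ f)\rangle:(A,a)\to(A,a)\times^T(X,x)$ is a $T$-algebra morphism by Lemma \ref{cartmonadlem}(\ref{cartmonadlem2}), and composing with $f$ and with $\pi_0:(B,b)\times^T(X,x)\to(B,b)$ (again a $T$-algebra morphism) gives that $\mathsf{Tr}^X_{A,B}(f) = \pi_0\circ f\circ\langle 1_A,\mathsf{Fix}^X_A(\pi_1\circ f)\rangle:(A,a)\to(B,b)$ is a $T$-algebra morphism, which is precisely condition (\ref{tracedmonad}). Thus $T$ is a traced monad.

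I do not expect a serious obstacle here: the only things one must be careful about are bookkeeping the algebra structures on the product algebras correctly and invoking the right parts of Lemma \ref{cartmonadlem} and Proposition \ref{fixtraceprop}. The mild subtlety is that in the ``if'' direction one should double-check that $\pi_1\circ f$ really has the right type as a morphism of algebras---its codomain is $(X,x)$ and its domain is the product algebra $(A,a)\times^T(X,x)$---and that the pairing $\langle 1_A,\mathsf{Fix}^X_A(\pi_1\circ f)\rangle$ lands in the product algebra with the $m$-induced structure, both of which follow immediately from Lemma \ref{cartmonadlem}. No explicit diagram chase with $\mu$, $\eta$, or $m$ is needed, since all the structure maps involved are handled by that lemma.
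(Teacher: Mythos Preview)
Your proposal is correct and follows essentially the same approach as the paper's own proof: both directions use the translations of Proposition~\ref{fixtraceprop} together with Lemma~\ref{cartmonadlem} to reduce the question to closure of $T$-algebra morphisms under pairing, projection, and composition, exactly as you describe. The paper's argument is identical in structure and in the lemmas invoked.
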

\begin{proof} $\Rightarrow:$ Suppose that $(T, \mu, \eta, m, m_I)$ is a traced monad. Let $f: (A,a) \times^T (X,x) \to (X,x)$ be a $T$-algebra morphism. By Lemma \ref{cartmonadlem}.(\ref{cartmonadlem2}), recall that the pairing of $T$-algebra morphisms is again a $T$-algebra morphism, so ${\langle f, f \rangle: (A,a) \times^T (X,x) \to (X,x) \times^T (X,x)}$ is a $T$-algebra morphism. Thus its trace ${\mathsf{Tr}^X_{A,X}(\langle f, f \rangle): (A,a) \to (X,x)}$ is also a $T$-algebra morphism. So we conclude that the parametrized fixed point of $f$, 
\[\mathsf{Fix}^X_{A}(f) := \mathsf{Tr}^X_{A,X}(\langle f, f \rangle): (A,a) \to (X,x)\]
is also a $T$-algebra morphism, as desired. 

$\Leftarrow:$ Suppose that the parametrized fixed point of a $T$-algebra morphism is again a $T$-algebra morphism.  Let $f: (A,a) \otimes^T (X,x) \to (B,b) \otimes^T (X,x)$ be a $T$-algebra morphism. By Lemma \ref{cartmonadlem}.(\ref{cartmonadlem1}), recall that the projection maps $\pi_0: (B,b) \times^T (X,x) \to (B,b)$ and $\pi_1: (B,b) \times^T (X,x) \to (X,x)$ are $T$-algebra morphisms. Therefore, the composite $\pi_1 \circ f:  (A,a) \otimes^T (X,x) \to (X,x)$ is a $T$-algebra morphism, and therefore so is its parametrized fixed point $\mathsf{Fix}^X_{A}(f): (A,a) \to (X,x)$. As such, it follows that the following is a composite of $T$-algebra morphisms:  
 \[ \begin{array}[c]{c} \mathsf{Tr}^X_{A,B}(f) \end{array} := \begin{array}[c]{c} \xymatrixcolsep{5pc}\xymatrixrowsep{1pc}\xymatrix{ (A,a) \ar[r]^-{\langle 1_A, \mathsf{Fix}^X_A(\pi_1 \circ f) \rangle} & (A,a) \times^T (X,x) \ar[r]^-{f} &\\ (B,b) \times^T (X,x) \ar[r]^-{\pi_0} &  (B ,b)}  \end{array} \]
Thus $\mathsf{Tr}^X_{A,B}(f): (A,a) \to (B,b)$ is a $T$-algebra morphism, and we conclude that  $(T, \mu, \eta, m, m_I)$ is a traced monad.~~~~~\end{proof}

Therefore traced monads also lift the Conway operator to the Eilenberg-Moore category. 

\begin{corollary}  Let $(T, \mu, \eta, m, m_I)$ be a traced monad on a traced Cartesian monoidal category $(\mathbb{X}, \times, \top, \alpha, \lambda, \rho, \sigma, \mathsf{Tr})$. Then $\left(\mathbb{X}^T, \times^T, (\top, m_\top), \alpha^T, \lambda^T, \rho^T, \sigma^T, \mathsf{Tr}^T \right)$, as defined in Proposition \ref{EMTSMC}, is a traced Cartesian monoidal category where the induced Conway operator $\mathsf{Fix}^T$ is defined as follows for a $T$-algebra morphism $f: (A,a) \times^T (X,x) \to (X,x)$:
\[ {\mathsf{Fix}^T}^{(X,x)}_{(A,a)}(f) =  \mathsf{Fix}^X_{A}(f) : (A,a) \to (X,x) \]
Furthermore, the forgetful functor:
\[U^T\!\!:\!\left(\mathbb{X}^T, \times^T, (I, m_I), \alpha^T, \lambda^T, \rho^T, \sigma^T, \mathsf{Tr}^T \right) \to (\mathbb{X}, \times, I, \alpha, \lambda, \rho, \sigma, \mathsf{Tr})\]
preserves the Conway operator, that is, $U^T\left(\mathsf{Fix}^T(-)  \right) = \mathsf{Fix}\left( U^T (-) \right)$.
\end{corollary}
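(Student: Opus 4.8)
The plan is to assemble this corollary from three ingredients already established: Proposition~\ref{EMTSMC}, which supplies the traced symmetric monoidal structure on $\mathbb{X}^T$ together with the strictness of $U^T$; Lemma~\ref{cartmonadlem}, which says this structure is in fact Cartesian and that $U^T$ strictly preserves finite products; and Proposition~\ref{fixtraceprop}, the bijective correspondence between trace operators and Conway operators on a Cartesian monoidal category. First I would note that since $(T, \mu, \eta, m, m_I)$ is a traced monad, Proposition~\ref{EMTSMC} makes $\left(\mathbb{X}^T, \otimes^T, (I, m_I), \alpha^T, \ell^T, \rho^T, \sigma^T, \mathsf{Tr}^T\right)$ a traced symmetric monoidal category, while Lemma~\ref{cartmonadlem} identifies its monoidal part with the Cartesian monoidal structure $\left(\mathbb{X}^T, \times^T, (\top, m_\top), \alpha^T, \ell^T, \rho^T, \sigma^T\right)$. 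Hence $\left(\mathbb{X}^T, \times^T, (\top, m_\top), \alpha^T, \ell^T, \rho^T, \sigma^T, \mathsf{Tr}^T\right)$ is a traced Cartesian monoidal category, and so by Proposition~\ref{fixtraceprop}(ii) it carries an induced Conway operator, namely ${\mathsf{Fix}^T}^{(X,x)}_{(A,a)}(f) = {\mathsf{Tr}^T}^{(X,x)}_{(A,a),(X,x)}\!\left(\langle f, f\rangle^T\right)$ for a $T$-algebra morphism $f : (A,a) \times^T (X,x) \to (X,x)$.

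It then remains to unfold this formula. By Lemma~\ref{cartmonadlem} the pairing $\langle f, f\rangle^T$ taken in $\mathbb{X}^T$ is the pairing $\langle f, f\rangle$ taken in $\mathbb{X}$, and by Proposition~\ref{EMTSMC} the trace ${\mathsf{Tr}^T}^{(X,x)}_{(A,a),(X,x)}(g)$ is the trace $\mathsf{Tr}^X_{A,X}(g)$ taken in $\mathbb{X}$; hence ${\mathsf{Fix}^T}^{(X,x)}_{(A,a)}(f) = \mathsf{Tr}^X_{A,X}(\langle f, f\rangle)$, which is exactly $\mathsf{Fix}^X_A(f)$ by the description of the induced Conway operator on $\mathbb{X}$ in Proposition~\ref{fixtraceprop}(ii). (That $\mathsf{Fix}^X_A(f)$ is genuinely a $T$-algebra morphism was already shown in the proof of the preceding Lemma, so the formula does land in $\mathbb{X}^T$.) For the preservation statement I would then chain the strictness properties of $U^T$:
\[ U^T\!\left(\mathsf{Fix}^T(-)\right) = U^T\!\left(\mathsf{Tr}^T(\langle -, -\rangle^T)\right) = \mathsf{Tr}\!\left(U^T(\langle -, -\rangle^T)\right) = \mathsf{Tr}\!\left(\langle U^T(-), U^T(-)\rangle\right) = \mathsf{Fix}\!\left(U^T(-)\right), \]
using that $U^T$ strictly preserves both the trace (Proposition~\ref{EMTSMC}) and pairings (Lemma~\ref{cartmonadlem}).

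I do not expect a genuine obstacle here; the corollary is essentially a bookkeeping exercise threading together the earlier results. The one point worth stating carefully is the coherence of the two ``induced Conway operator'' recipes --- that the operator $\mathsf{Fix}^T$ obtained from $\mathsf{Tr}^T$ on $\mathbb{X}^T$ agrees, algebra by algebra, with the restriction of $\mathsf{Fix}$ to $T$-algebra morphisms --- but as noted this is immediate, since both are computed by the single formula $f \mapsto \mathsf{Tr}^X_{A,X}(\langle f, f\rangle)$.
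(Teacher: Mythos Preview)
Your proposal is correct and matches the paper's approach: the paper states this corollary without proof, treating it as an immediate consequence of Proposition~\ref{EMTSMC}, Lemma~\ref{cartmonadlem}, and Proposition~\ref{fixtraceprop}, exactly as you assemble it. Your explicit unfolding of ${\mathsf{Fix}^T}^{(X,x)}_{(A,a)}(f) = {\mathsf{Tr}^T}^{(X,x)}_{(A,a),(X,x)}(\langle f,f\rangle^T) = \mathsf{Tr}^X_{A,X}(\langle f,f\rangle) = \mathsf{Fix}^X_A(f)$ is precisely the bookkeeping the paper leaves implicit.
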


Finally, trace-coherent Hopf monads on traced Cartesian monoidal categories can themselves also be characterized in terms of the Conway operator. Consider this time a map of type:
\[f: A \times T(X) \to T(X)\]
We can first obtain its parametrized fixed point: 
\[ \mathsf{Fix}^{T(X)}_{A}(f): A \to T(X) \] 
We can then apply the functor $T$ to the parametrized fixed point and obtain: 
\[ T\left( \mathsf{Fix}^{T(X)}_{A}(f) \right): T(A) \to TT(X) \] 
Post-composing with the monad multiplication, we obtain a map of type $T(A) \to T(X)$: 
\[\mu_X \circ T\left( \mathsf{Fix}^{T(X)}_{A}(f) \right): T(A) \to T(X) \] 
Alternatively, we could have first applied the functor $T$ to $f$: 
\[ T(f): T(A \times T(X)) \to TT(X) \]
We cannot take the parametrized fixed point of $T(f)$ when it is of this form. To obtain a map on which we can apply the Conway operator, we post-compose by monad multiplication and pre-compose by the inverse of the left fusion operator: 
 \[ \xymatrixcolsep{5pc}\xymatrixrowsep{1pc}\xymatrix{ T(A) \times T(X) \ar[r]^-{\mathsf{h}^{l^{-1}}_{A,X}  } & T(A \times T(X)) \ar[r]^-{T(f)} & TT(X) \ar[r]^-{\mu_X} &  T(X) } \]
 We can now take the parametrized fixed point of this map to obtain a map of type $T(A) \to T(X)$: 
 \[\mathsf{Fix}^{T(X)}_{T(A)}\left( \mu_X \circ T(f) \circ \mathsf{h}^{l^{-1}}_{A,X} \right) : T(A) \to T(X) \] 
Thus the trace-coherent axiom can alternatively be stated as saying that these two maps of type $T(A) \to T(X)$ are equal. 

\begin{proposition} \label{prop:hopftracefix} Let $(\mathbb{X}, \times, \top, \alpha, \lambda, \rho, \sigma, \mathsf{Tr})$ be traced Cartesian monoidal category with induced Conway operator $\mathsf{Fix}$ and let $(T, \mu, \eta, m, m_I)$ be a symmetric Hopf monad on the underlying symmetric monoidal category $(\mathbb{X}, \times, \top, \alpha, \lambda, \rho, \sigma)$. Then $(T, \mu, \eta, m, m_I)$ is a trace-coherent Hopf monad (or equivalently a traced monad) on $(\mathbb{X}, \times, \top, \alpha, \lambda, \rho, \sigma, \mathsf{Tr})$ if and only if for every map 
\[f: A \times T(X) \to T(X)\] 
the following equality holds: 
\begin{align} \label{fixcoherent}
\mu_X \circ T\left( \mathsf{Fix}^{T(X)}_{A}(f) \right) = \mathsf{Fix}^{T(X)}_{T(A)}\left( \mu_X \circ T(f) \circ \mathsf{h}^{l^{-1}}_{A,X} \right) 
\end{align}
\end{proposition}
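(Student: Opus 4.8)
The plan is to reduce the statement about the Conway operator to the trace-coherent axiom (\ref{tracecoherent}) via the correspondence of Proposition \ref{fixtraceprop}, and conversely. Recall that in a traced Cartesian monoidal category one has $\mathsf{Fix}^X_A(g) = \mathsf{Tr}^X_{A,X}(\langle g, g\rangle)$ and $\mathsf{Tr}^X_{A,B}(f) = \pi_0 \circ f \circ \langle 1_A, \mathsf{Fix}^X_A(\pi_1 \circ f)\rangle$, and moreover the induced symmetric bimonad structure on $T$ is given by $m_{A,B} = \langle T(\pi_0), T(\pi_1)\rangle$ (Lemma \ref{cartmonadlem}). Since by Theorem \ref{mainthm} the trace-coherent axiom is equivalent to being a traced monad, and we have a Conway-operator reformulation of traced monads in the Cartesian case (the lemma just above this proposition, equation (\ref{fixedmonad})), the cleanest route is actually: show directly that (\ref{fixcoherent}) for all $f: A\times T(X) \to T(X)$ is equivalent to the condition (\ref{fixedmonad}) that the parametrized fixed point of a $T$-algebra morphism is again a $T$-algebra morphism, exactly as Theorem \ref{mainthm} relates (\ref{tracecoherent}) to (\ref{tracedmonad}).

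First I would prove the $(\Leftarrow)$ direction, i.e. assume (\ref{fixcoherent}) and derive (\ref{fixedmonad}). Let $f: (A,a)\times^T (X,x) \to (X,x)$ be a $T$-algebra morphism; unwinding, this says $f \circ (a\times x)\circ m_{A,X} = x \circ T(f)$. Mimicking the computation in the $(\Leftarrow)$ half of Theorem \ref{mainthm}: using that $(X,x)$ is a $T$-algebra, $\mathsf{h}^l_{A,X}\circ \mathsf{h}^{l^{-1}}_{A,X} = 1$, naturality of $m$, and the hypothesis on $f$, one obtains the analogue of (\ref{fTalg2}), namely $f\circ(a\times x) = x\circ T(f)\circ T(1_A\times x)\circ \mathsf{h}^{l^{-1}}_{A,X}$. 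Then, starting from $\mathsf{Fix}^X_A(f)\circ a$, use \textbf{[Naturality]} of $\mathsf{Fix}$ (\ref{fpnat1}) to absorb $a$, insert $x\circ \eta_X = 1$, apply the $T$-algebra morphism identity for $f$, and then push through the fusion-operator identities (\ref{h1})--(\ref{h3}) and the monad identities exactly as in the proof of Theorem \ref{mainthm}, finally invoking (\ref{fixcoherent}) in place of (\ref{tracecoherent}) and (\ref{fpnat2}) in place of \textbf{[Sliding]}, to conclude $\mathsf{Fix}^X_A(f)\circ a = x\circ T(\mathsf{Fix}^X_A(f))$. By the lemma preceding this proposition, this makes $T$ a traced monad, hence trace-coherent by Theorem \ref{mainthm}.

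For the $(\Rightarrow)$ direction, assume $T$ is trace-coherent (equivalently a traced monad). Given $f: A\times T(X) \to T(X)$, one observes that both $T(A) \to T(X)$ maps in (\ref{fixcoherent}) are $T$-algebra morphisms $(T(A),\mu_A) \to (T(X),\mu_X)$: the left side because $\mathsf{Fix}^{T(X)}_A(f): A\to T(X)$ lifts along $\eta$ and $\mu_X\circ T(-)$ of any map is an algebra morphism out of a free algebra; the right side because $\mathsf{h}^{l^{-1}}_{A,X}$, $T(f)$, and $\mu_X$ are all algebra morphisms (Lemma \ref{fusiontalg} and the fact that $\mu_X: (TT(X),\mu_{T(X)}) \to (T(X),\mu_X)$ is one), and $\mu_X\circ T(f)\circ \mathsf{h}^{l^{-1}}_{A,X}: (T(A),\mu_A)\times^T(T(X),\mu_X) \to (T(X),\mu_X)$ is then an algebra morphism whose parametrized fixed point, by (\ref{fixedmonad}), is again one. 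Since both sides are algebra morphisms out of a free algebra of the same type, by Lemma \ref{Talglemma4} it suffices to check equality after precomposition with $\eta_A$. Using \textbf{[Naturality]} of $\mathsf{Fix}$, the identities (\ref{h2}) and (\ref{h4}) for $\mathsf{h}^l$ and its inverse, and naturality of $\eta$ and $\mu$ together with the monad unit law $\mu_X\circ\eta_{T(X)} = 1$, one computes both sides down to $\eta_X \circ \mathsf{Fix}^{T(X)}_A(f)$ (for the left side, $\mu_X\circ T(\mathsf{Fix}^{T(X)}_A(f))\circ \eta_A = \mu_X\circ \eta_{T(X)}\circ \mathsf{Fix}^{T(X)}_A(f) = \mathsf{Fix}^{T(X)}_A(f)$—actually this lands in $T(X)$ directly, and symmetrically the right side; care must be taken to track that the common value is $\mathsf{Fix}^{T(X)}_A(f)$ viewed via $\eta$), yielding (\ref{fixcoherent}).

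The main obstacle I anticipate is bookkeeping rather than conceptual: keeping the indices on the fusion operators, the $\mathsf{Fix}$ naturality instances, and the associativity isomorphisms straight, and making sure the Cartesian-specific simplification $m_{A,X} = \langle T(\pi_0), T(\pi_1)\rangle$ interacts correctly with the pairings appearing inside $\mathsf{Fix}^X_A(f) = \mathsf{Tr}^X_{A,X}(\langle f,f\rangle)$. An alternative, possibly shorter, presentation would be to invoke Proposition \ref{fixtraceprop} to rewrite (\ref{fixcoherent}) entirely in terms of $\mathsf{Tr}$ and then quote Theorem \ref{mainthm} directly; but verifying that the translated identity is literally (\ref{tracecoherent}) still requires the same pairing manipulations, so I expect little is saved either way.
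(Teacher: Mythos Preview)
Your approach is correct but takes a genuinely different route from the paper. The paper does precisely the ``alternative'' you dismiss in your last paragraph: it translates directly between $\mathsf{Fix}$ and $\mathsf{Tr}$ via Proposition \ref{fixtraceprop} and verifies that (\ref{fixcoherent}) and (\ref{tracecoherent}) are algebraic rewrites of one another. For $\Rightarrow$, the paper writes $\mathsf{Fix}^{T(X)}_A(f) = \mathsf{Tr}^{T(X)}_{A,T(X)}(\langle f,f\rangle)$, applies trace-coherence (\ref{tracecoherent}) to $\langle f,f\rangle$, then unfolds $m_{A,B} = \langle T(\pi_0), T(\pi_1)\rangle$ and the pairing identity $(\mu_X\times\mu_X)\circ\langle T(\pi_0),T(\pi_1)\rangle\circ T(\langle f,f\rangle)\circ\mathsf{h}^{l^{-1}} = \langle \mu_X\circ T(f)\circ\mathsf{h}^{l^{-1}},\mu_X\circ T(f)\circ\mathsf{h}^{l^{-1}}\rangle$ to land back at a $\mathsf{Fix}$. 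For $\Leftarrow$, it establishes the auxiliary identities $\pi_1\circ\mathsf{h}^l_{A,X} = \mu_X\circ T(\pi_1)$ and $\mathsf{h}^l_{A,X}\circ T(\langle g,h\rangle) = \langle T(g),\mu_X\circ T(h)\rangle$, then unfolds $\mathsf{Tr}$ in terms of $\mathsf{Fix}$ and applies (\ref{fixcoherent}) once.

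Your strategy instead recycles the \emph{structure} of Theorem \ref{mainthm}: for $\Rightarrow$ you argue that both sides of (\ref{fixcoherent}) are $T$-algebra morphisms out of the free algebra $(T(A),\mu_A)$ (the right side via (\ref{fixedmonad}) and Lemma \ref{fusiontalg}), invoke Lemma \ref{Talglemma4}, and check equality after $\circ\,\eta_A$ using (\ref{h4}) and naturality---this works cleanly and is arguably more conceptual than the paper's direct computation. For $\Leftarrow$ you replay the $\Leftarrow$-half of Theorem \ref{mainthm} with $\mathsf{Fix}$ in place of $\mathsf{Tr}$, using (\ref{fpnat1}) for Tightening and (\ref{fpnat2}) for Sliding; this also works but is just as long as the paper's route and requires the same fusion-operator bookkeeping you anticipate. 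The trade-off: your proof makes the parallelism with Theorem \ref{mainthm} explicit and avoids ever unpacking $\mathsf{Fix}$ as a trace of a diagonal pairing, while the paper's proof is self-contained (it does not need to cite (\ref{fixedmonad}) or Lemma \ref{Talglemma4}) and makes the Cartesian-specific simplifications $\pi_i\circ\mathsf{h}^l$ and $\mathsf{h}^l\circ T(\langle-,-\rangle)$ do the work.
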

\begin{proof} $\Rightarrow$: Suppose that $(T, \mu, \eta, m, m_I)$ is trace-coherent. First recall that in a Cartesian monoidal category, the pairing operation satisfies the following equality for any arbitrary maps $j: A^\prime \to A$, $h: A \to B$, $k: A \to C$, ${f: B \to B^\prime}$, and $g: C \to C^\prime$: 
\begin{align} \label{pairprop1}
 (f \times g) \circ \langle h, k \rangle \circ j = \left \langle f \circ h \circ j, g \circ k \circ j \right \rangle     
\end{align}
Expressing the Conway operator $\mathsf{Fix}$ in terms of the trace operator $\mathsf{Tr}$ and using the trace-coherence, we compute: 
\begin{align*}
&\mu_X \circ T\left( \mathsf{Fix}^{T(X)}_{A}(f) \right) =~ \mu_X \circ T\left( \mathsf{Tr}^{T(X)}_{A, T(X)}\left( \langle f, f \rangle \right) \right) \tag{Def. of $\mathsf{Fix}$ -- Prop. \ref{fixtraceprop}} \\ 
&=~ \mu_X \circ \mathsf{Tr}^{T(X)}_{T(A),T(X)}\left( \mathsf{h}^l_{X,X} \circ  T\left( \langle f, f \rangle \right) \circ \mathsf{h}^{l^{-1}}_{A,X}   \right) \tag{Trace-Coherent -- (\ref{tracecoherent})} \\ 
&=~  \mathsf{Tr}^{T(X)}_{T(A),T(X)}\left( (\mu_X \times 1_{T(X)}) \circ \mathsf{h}^l_{X,X} \circ  T\left( \langle f, f \rangle \right) \circ \mathsf{h}^{l^{-1}}_{A,X}   \right) \tag{\textbf{[Tightening]} -- (\ref{tightening2})}\\
&=~  \mathsf{Tr}^{T(X)}_{T(A),T(X)}\left( (\mu_X \times 1_{T(X)}) \circ (1_{TT(X)} \times \mu_X) \circ m_{T(X), T(X)} \circ  T\left( \langle f, f \rangle \right) \circ \mathsf{h}^{l^{-1}}_{A,X}   \right) \tag{Fus. Op. Def. -- Def. \ref{Hopfmonaddef}} \\
&=~ \mathsf{Tr}^{T(X)}_{T(A),T(X)}\left( (\mu_X \times \mu_X) \circ m_{T(X), T(X)} \circ  T\left( \langle f, f \rangle \right) \circ \mathsf{h}^{l^{-1}}_{A,X}   \right) \\ 
&=~ \mathsf{Tr}^{T(X)}_{T(A),T(X)}\left( (\mu_X \times \mu_X) \circ \langle T(\pi_0), T(\pi_1) \rangle \circ  T\left( \langle f, f \rangle \right) \circ \mathsf{h}^{l^{-1}}_{A,X}   \right) \tag{Def. of $m$ -- Lem. \ref{cartmonadlem}} \\ 
&=~ \mathsf{Tr}^{T(X)}_{T(A),T(X)}\left( \left \langle \mu_X \circ T(\pi_0) \circ  T\left( \langle f, f \rangle \right) \circ \mathsf{h}^{l^{-1}}_{A,X}, \mu_X \circ T(\pi_1) \circ  T\left( \langle f, f \rangle \right) \circ \mathsf{h}^{l^{-1}}_{A,X} \right \rangle    \right) \tag{Property of $\langle -,- \rangle$ -- (\ref{pairprop1})} \\ 
&=~ \mathsf{Tr}^{T(X)}_{T(A),T(X)}\left( \left \langle \mu_X \circ T\left(\pi_0 \circ \langle f, f \rangle \right) \circ \mathsf{h}^{l^{-1}}_{A,X}, \mu_X \circ T\left(\pi_1 \circ \langle f, f \rangle \right) \circ \mathsf{h}^{l^{-1}}_{A,X} \right \rangle    \right)  \tag{Functoriality of $T$}\\ 
&=~ \mathsf{Tr}^{T(X)}_{T(A),T(X)}\left( \left \langle \mu_X \circ T(f) \circ \mathsf{h}^{l^{-1}}_{A,X}, \mu_X \circ T(f) \circ \mathsf{h}^{l^{-1}}_{A,X}  \right \rangle \right) \tag{Property of $\langle -,- \rangle$ -- Def. \ref{cartmondef})} \\
&=~ \mathsf{Fix}^{T(X)}_{T(A)}\left( \mu_X \circ T(f) \circ \mathsf{h}^{l^{-1}}_{A,X} \right) \tag{Def. of $\mathsf{Fix}$ -- Prop. \ref{fixtraceprop}}
\end{align*}
So the desired equality holds. 

$\Leftarrow:$ Suppose that (\ref{fixcoherent}) holds. First observe that the fusion operator \[\mathsf{h}^l_{A,X}: T(A \times T(X)) \to T(A) \times T(X)\]
satisfies the following equality: 
\begin{align} \label{pi1fusion}
  \pi_1 \circ \mathsf{h}^l_{A,X} = \mu_X \circ T(\pi_1)   
\end{align}
and is also compatible with pairing in the sense that for maps $f: A^\prime \to A$ and $g: A^\prime \to T(X)$, the following equality holds: 
\begin{align}\label{pairfusion}
   \mathsf{h}^l_{A,X} \circ  T\left( \langle f, g \rangle \right) = \left \langle T(f), \mu_X \circ T(g) \right \rangle  
\end{align}
Then expressing the trace operator $\mathsf{Tr}$ in terms of the Conway operator $\mathsf{Fix}$, we compute: 
\begin{align*}
&\mathsf{Tr}^{T(X)}_{T(A),T(B)}\left( \mathsf{h}^l_{B,X} \circ  T(f) \circ \mathsf{h}^{l^{-1}}_{A,X}   \right) \\
&=~ \pi_0 \circ \mathsf{h}^l_{B,X} \circ  T(f) \circ \mathsf{h}^{l^{-1}}_{A,X} \circ \left \langle 1_{T(A)}, \mathsf{Fix}^{T(X)}_{T(A)}\left( \pi_1 \circ \mathsf{h}^l_{B,X} \circ T(f) \circ \mathsf{h}^{l^{-1}}_{A,X} \right) \right \rangle \tag{Def. of $\mathsf{Tr}$ -- Prop. \ref{fixtraceprop}} \\ 
&=~ T(\pi_0) \circ  T(f) \circ \mathsf{h}^{l^{-1}}_{A,X} \circ \left \langle 1_{T(A)}, \mathsf{Fix}^{T(X)}_{T(A)}\left( \mu_X \circ T(\pi_1) \circ T(f) \circ \mathsf{h}^{l^{-1}}_{A,X} \right) \right \rangle \tag{Fusion Operator Identity -- (\ref{pi1fusion})} \\ 
&=~ T(\pi_0) \circ  T(f) \circ \mathsf{h}^{l^{-1}}_{A,X} \circ \left \langle T(1_A), \mathsf{Fix}^{T(X)}_{T(A)}\left( \mu_X \circ T(\pi_1 \circ f) \circ \mathsf{h}^{l^{-1}}_{A,X} \right) \right \rangle \tag{Functoriality of $T$} \\ 
&=~ T(\pi_0) \circ  T(f) \circ \mathsf{h}^{l^{-1}}_{A,X} \circ \left \langle T(1_A), \mu_X \circ T\left( \mathsf{Fix}^{T(X)}_{A}(\pi_1 \circ f) \right) \right \rangle \tag{Fixed Point Coherent -- (\ref{fixcoherent})} \\ 
&=~ T(\pi_0) \circ  T(f) \circ \mathsf{h}^{l^{-1}}_{A,X} \circ {\mathsf{h}^l}_{A,X} \circ T\left( \left \langle 1_A, \mathsf{Fix}^{T(X)}_{A}(\pi_1 \circ f) \right \rangle \right) \tag{Fusion Operator Identity -- (\ref{pairfusion})} \\
&=~ T(\pi_0) \circ  T(f) \circ T\left( \left \langle 1_A, \mathsf{Fix}^{T(X)}_{A}(\pi_1 \circ f) \right \rangle \right) \tag{Fusion Operator Inverses} \\
&=~ T\left(\pi_0 \circ f \circ \left \langle 1_A, \mathsf{Fix}^{T(X)}_{A}(\pi_1 \circ f) \right \rangle \right) \tag{Functoriality of $T$} \\ 
&=~ T\left( \mathsf{Tr}^{T(X)}_{A,B}\left( f \right) \right)  \tag{Def. of $\mathsf{Tr}$ -- Prop. \ref{fixtraceprop}} 
\end{align*}
So the desired trace-coherent identity holds and we conclude that $(T, \mu, \eta, m, m_I)$ is trace-coherent. \\
\phantom{ } \hfill 
\end{proof}

\section{Idempotent Hopf Monads}

In order to discuss trace-coherent Hopf monads in a coCartesian setting, we must first discuss idempotent monads, which are monads whose monad multiplication is an isomorphism. We will show some interesting new identities about bimonads or Hopf monads that are also idempotent monads, and some key results about idempotent traced monads. In particular, we will give necessary and sufficient conditions for when a trace-coherent Hopf monad is also an idempotent monad, which will be crucial for characterizing trace-coherent Hopf monads on traced coCartesian monoidal categories in the next section. 

\begin{definition}\cite[Proposition 4.2.3]{borceux1994handbook} An \textbf{idempotent monad} on a category $\mathbb{X}$ is a monad $(T, \mu, \eta)$ on $\mathbb{X}$ such that $\mu_A: TT(A) \to T(A)$ is a natural isomorphism, so $TT(A) \cong T(A)$. 
\end{definition}

For idempotent monads, the inverse of the monad multiplication must be the monad unit. In fact, for an idempotent monad, its algebras are precisely the objects for which, this time, the monad unit is an isomorphism. As such, it follows that the Eilenberg-Moore category of an idempotent monad can be interpreted as a full subcategory of the base category. 

\begin{lemma} \label{idempotentalg}\cite[Proposition 4.2.3 \& Corollary 4.2.4]{borceux1994handbook}  Let $(T, \mu, \eta)$ be an idempotent monad on a category $\mathbb{X}$. Then: 
\begin{enumerate}[{\em (i)}]
\item ${\mu^{-1}_A \!=\! \eta_{T(A)} \!=\! T(\eta_A)}$
\item If $\eta_A: A \to T(A)$ is an isomorphism, then $(A, \eta^{-1}_A)$ is a $T$-algebra;
\item \label{idempotentalg3} If $(A,a)$ is a $T$-algebra then $a = \eta_A^{-1}$;
\item \label{idempotentalg4} If $(A, \eta^{-1}_A)$ and $(B, \eta^{-1}_B)$ are $T$-algebras, then for every map $f: A \to B$, we have that $f: (A, \eta^{-1}_A) \to (B, \eta^{-1}_B)$ is a $T$-algebra morphism.
\end{enumerate}
Therefore, every object $A$ has \emph{at most one} $T$-algebra structure if and only if $\eta_A: A \to T(A)$ is an isomorphism. Furthermore, the forgetful functor $U^{T}: \mathbb{X}^T \to \mathbb{X}$ is full and faithful, and so the Eilenberg-Moore category $\mathbb{X}^T$ is isomorphic to the full subcategory of $\mathbb{X}$ whose objects are those of $\mathbb{X}$ such that $\eta_A: A \to T(A)$ is an isomorphism. 
\end{lemma}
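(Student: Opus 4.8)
The plan is to establish items (i)--(iv) in order, each being a short diagram chase using only the monad axioms (\ref{monadeq}) and the naturality squares of $\eta$, and then to read the ``furthermore'' sentence off from (ii)--(iv). For (i), I would start from the two unit--multiplication identities $\mu_A \circ \eta_{T(A)} = 1_{T(A)} = \mu_A \circ T(\eta_A)$ in (\ref{monadeq}); since $\mu_A$ is invertible by hypothesis, precomposing with $\mu_A^{-1}$ gives at once $\eta_{T(A)} = \mu_A^{-1} = T(\eta_A)$. I would also record the immediate consequence that $T(\eta_A)$ is invertible with inverse $\mu_A$, so by functoriality $T(\eta_A^{-1})$ is invertible and $T(\eta_A^{-1}) = (T(\eta_A))^{-1} = \mu_A$; this identity is what powers (ii).

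For (ii), assuming $\eta_A$ invertible, I check the $T$-algebra axioms (\ref{Talg}) for $(A, \eta_A^{-1})$: the unit law $\eta_A^{-1}\circ\eta_A = 1_A$ is immediate, and the associativity law $\eta_A^{-1}\circ\mu_A = \eta_A^{-1}\circ T(\eta_A^{-1})$ holds because $\mu_A = T(\eta_A^{-1})$ by the remark above. For (iii), given any $T$-algebra $(A,a)$, the unit axiom gives $a\circ\eta_A = 1_A$, while naturality of $\eta$ along $a\colon T(A)\to A$ together with item (i) gives $\eta_A\circ a = T(a)\circ\eta_{T(A)} = T(a)\circ T(\eta_A) = T(a\circ\eta_A) = 1_{T(A)}$; hence $a$ is a two-sided inverse of $\eta_A$, so $\eta_A$ is invertible and $a = \eta_A^{-1}$. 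For (iv), naturality of $\eta$ along $f\colon A\to B$ gives $\eta_B\circ f = T(f)\circ\eta_A$, and postcomposing with $\eta_B^{-1}$ while precomposing with $\eta_A^{-1}$ yields $f\circ\eta_A^{-1} = \eta_B^{-1}\circ T(f)$, which is precisely the algebra-morphism condition (\ref{Talgmap}).

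For the ``furthermore'' part, the key observation is that the computation in (iii) does not presuppose $\eta_A$ invertible but produces it: the mere existence of a $T$-algebra structure on $A$ forces $\eta_A$ to be invertible and the structure map to equal $\eta_A^{-1}$; combined with (ii) this gives the stated biconditional and the uniqueness of the structure. Then $U^T$ is faithful for formal reasons (it is the identity on underlying morphisms), it is full by (iv) (by (iii) every $T$-algebra is of the form $(-,\eta^{-1}_{-})$, and any base map between the underlying objects of two such algebras is automatically an algebra morphism), and it is injective on objects again by (iii) (the underlying object determines the structure). A full, faithful, injective-on-objects functor is an isomorphism onto the full subcategory of the codomain on its image, and by (ii)--(iii) that image is exactly the full subcategory of $\mathbb{X}$ on the objects $A$ with $\eta_A$ invertible.

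I do not foresee a genuine obstacle here; the single point worth stating carefully rather than waving at is the logical direction inside (iii) --- namely that $\eta_A\circ a = 1_{T(A)}$ is \emph{deduced} by the chase, so invertibility of $\eta_A$ and the formula $a = \eta_A^{-1}$ are consequences of having an algebra structure, not standing assumptions. Everything else is bookkeeping with (\ref{monadeq}) and the naturality squares of $\eta$.
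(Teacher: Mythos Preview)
The paper does not supply its own proof of this lemma; it is stated with a citation to Borceux and used as a black box. Your argument is correct and is essentially the standard one: (i) follows by cancelling the invertible $\mu_A$ in the unit laws, (ii) and (iii) are the expected short chases (your key step in (iii), deducing $\eta_A\circ a = 1_{T(A)}$ from naturality of $\eta$ together with $\eta_{T(A)} = T(\eta_A)$, is exactly the point that needs care and you handle it cleanly), and (iv) is naturality of $\eta$ conjugated by the inverses. Your reading of the ``Therefore'' clause and the fullness/faithfulness/injectivity-on-objects of $U^T$ is also correct.
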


We now discuss idempotent bimonads, that is, bimonads whose underlying monad is idempotent. 

\begin{definition} An \textbf{idempotent bimonad} on a monoidal category $(\mathbb{X}, \otimes, I, \alpha, \lambda, \rho)$ is a bimonad $(T, \mu, \eta, m, m_I)$ on $(\mathbb{X}, \otimes, I, \alpha, \lambda, \rho)$ whose underlying monad $(T, \mu, \eta)$ is an idempotent monad. Similarly, a \textbf{symmetric idempotent bimonad} on a symmetric monoidal category \\
$(\mathbb{X}, \otimes, I, \alpha, \lambda, \rho, \sigma)$ is a symmetric bimonad $(T, \mu, \eta, m, m_I)$ on $(\mathbb{X}, \otimes, I, \alpha, \lambda, \rho, \sigma)$ which is also an idempotent bimonad. 
\end{definition}

We first observe that the Eilenberg-Moore category of an idempotent (symmetric) bimonad is essentially a full sub-(symmetric) monoidal category of the base category.

\begin{lemma}\label{Biidempotentlemma2} Let $(T, \mu, \eta, m, m_I)$ be an idempotent bimonad on a monoidal category $(\mathbb{X}, \!\otimes, \!I, \!\alpha, \lambda, \rho)$. Then if ${\eta_A: A \to T(A)}$ and $\eta_B: B \to T(B)$ are isomorphisms, $\eta_{A \otimes B}: A \otimes B \to T(A \otimes B)$ is an isomorphism with inverse $\eta^{-1}_{A \otimes B} = (\eta^{-1}_A \otimes \eta^{-1}_B) \circ m_{A,B}$. Therefore, 
\[(A, \eta^{-1}_A) \otimes^T (B, \eta^{-1}_B) = (A \otimes B, \eta^{-1}_{A \otimes B})\]
\end{lemma}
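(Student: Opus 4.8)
The plan is to verify directly that $(\eta_A^{-1}\otimes\eta_B^{-1})\circ m_{A,B}$ is a two-sided inverse to $\eta_{A\otimes B}$, using only the comonoidal naturality of $\eta$ and the idempotency identities from Lemma \ref{idempotentalg}. First I would recall the key pieces of data: since $\eta$ is a comonoidal transformation (second line of (\ref{symbimonad})), we have $m_{A,B}\circ\eta_{A\otimes B} = \eta_A\otimes\eta_B$; and since $T$ is idempotent, Lemma \ref{idempotentalg}\,(i) gives $\mu_A^{-1}=\eta_{T(A)}=T(\eta_A)$. These will be the only ingredients needed.

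For one composite this is immediate: $(\eta_A^{-1}\otimes\eta_B^{-1})\circ m_{A,B}\circ\eta_{A\otimes B} = (\eta_A^{-1}\otimes\eta_B^{-1})\circ(\eta_A\otimes\eta_B) = 1_{A\otimes B}$ by functoriality of $\otimes$. For the other composite, $\eta_{A\otimes B}\circ(\eta_A^{-1}\otimes\eta_B^{-1})\circ m_{A,B} = 1_{T(A\otimes B)}$, I would argue as follows. By naturality of $\eta$ applied to the map $\eta_A^{-1}\otimes\eta_B^{-1}:T(A)\otimes T(B)\to A\otimes B$, we get $\eta_{A\otimes B}\circ(\eta_A^{-1}\otimes\eta_B^{-1}) = T(\eta_A^{-1}\otimes\eta_B^{-1})\circ\eta_{T(A)\otimes T(B)}$. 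Now I use naturality of $m$ (the comonoidal structure map $m$ is a natural transformation $T(-\otimes-)\Rightarrow T(-)\otimes T(-)$): $m_{T(A),T(B)}\circ T(\eta_A\otimes\eta_B) = (T(\eta_A)\otimes T(\eta_B))\circ m_{A,B}$. Combining these, it suffices to show $T(\eta_A^{-1}\otimes\eta_B^{-1})\circ\eta_{T(A)\otimes T(B)}\circ(\eta_A\otimes\eta_B) = (T(\eta_A^{-1})\otimes T(\eta_B^{-1}))\circ$(something)$\circ m_{A,B}$ collapses to the identity; the cleanest route is to first rewrite $m_{A,B} = (\eta_A\otimes\eta_B)$ composed with $\eta_{A\otimes B}^{-1}$... but that is circular. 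Instead, I would push the computation through $T$: apply $T$ to the already-proven identity $(\eta_A^{-1}\otimes\eta_B^{-1})\circ m_{A,B}\circ\eta_{A\otimes B}=1$ to get $T(\eta_A^{-1}\otimes\eta_B^{-1})\circ T(m_{A,B})\circ T(\eta_{A\otimes B}) = 1_{TT(A\otimes B)}$, and use $T(\eta_{A\otimes B})=\mu_{A\otimes B}^{-1}$ together with the comonoidal-multiplication axiom (first equation of (\ref{symbimonad})), $m_{A,B}\circ\mu_{A\otimes B} = (\mu_A\otimes\mu_B)\circ m_{T(A),T(B)}\circ T(m_{A,B})$, to transport everything back down to $T(A\otimes B)$.

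A more economical argument, which I would ultimately prefer, is to invoke Lemma \ref{idempotentalg}\,(iv) and the fact that $\otimes^T$ on the Eilenberg-Moore category is given by $(A,a)\otimes^T(B,b) = (A\otimes B,(a\otimes b)\circ m_{A,B})$ (Proposition \ref{EMTSMC}): since $(A,\eta_A^{-1})$ and $(B,\eta_B^{-1})$ are $T$-algebras, their tensor $(A\otimes B,(\eta_A^{-1}\otimes\eta_B^{-1})\circ m_{A,B})$ is a $T$-algebra, and so by Lemma \ref{idempotentalg}\,(iii) its structure map must equal $\eta_{A\otimes B}^{-1}$; in particular $\eta_{A\otimes B}$ is invertible with the asserted inverse, and the equation $(A,\eta_A^{-1})\otimes^T(B,\eta_B^{-1}) = (A\otimes B,\eta_{A\otimes B}^{-1})$ follows at once. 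The only genuine obstacle is making sure we may apply Lemma \ref{idempotentalg}\,(iii), which requires knowing that $(A\otimes B,(\eta_A^{-1}\otimes\eta_B^{-1})\circ m_{A,B})$ really is a $T$-algebra — but this is exactly the content of $(A,\eta_A^{-1})\otimes^T(B,\eta_B^{-1})$ being an object of $\mathbb{X}^T$, which holds because $\mathbb{X}^T$ is closed under $\otimes^T$ by Proposition \ref{EMTSMC} (Moerdijk's theorem), so no extra work is needed. I expect the write-up to be just a few lines.
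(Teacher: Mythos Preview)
Your preferred ``more economical'' argument is exactly the paper's proof: use Moerdijk's theorem (Proposition~\ref{EMTSMC}) to see that $(A\otimes B,(\eta_A^{-1}\otimes\eta_B^{-1})\circ m_{A,B})$ is a $T$-algebra, then apply Lemma~\ref{idempotentalg}(\ref{idempotentalg3}) to conclude this structure map must be $\eta_{A\otimes B}^{-1}$. The reference to Lemma~\ref{idempotentalg}(\ref{idempotentalg4}) is unnecessary, and the direct-verification detour in your first paragraph can be dropped entirely.
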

\begin{proof} Suppose that ${\eta_A: A \to T(A)}$ and $\eta_B: B \to T(B)$ are isomorphisms, so by Lemma \ref{idempotentalg}, $(A, \eta^{-1}_A)$ and $(B, \eta^{-1}_B)$ are $T$-algebras. By \cite[Theorem 7.1]{moerdijk2002monads}, we have that 
\[(A, \eta^{-1}_A) \otimes^T (B, \eta^{-1}_B) = (A \otimes B, (\eta^{-1}_A \otimes \eta^{-1}_B) \circ m_{A,B})\] 
is a $T$-algebra. However by Lemma \ref{idempotentalg}.(\ref{idempotentalg3}), it follows that $\eta_{A \otimes B}: A \otimes B \to T(A \otimes B)$ is an isomorphism with inverse $\eta^{-1}_{A \otimes B} = (\eta^{-1}_A \otimes \eta^{-1}_B) \circ m_{A,B}$ as desired. 
\end{proof}

Next, we observe that for idempotent bimonads, the monad unit at the monoidal unit is an isomorphism, which follows from the fact that the monoidal unit is an algebra of the monad. 

\begin{lemma}\label{Biidempotentlemma} Let $(T, \mu, \eta, m, m_I)$ be an idempotent bimonad on a monoidal category $(\mathbb{X}, \!\otimes, \!I, \!\alpha, \lambda, \rho)$. Then the following equality holds: $\eta_I \circ m_I = 1_{T(I)}$. Therefore $\eta_I: I \to T(I)$ is an isomorphism with inverse $m_I: T(I) \to I$, so $T(I) \cong I$.
\end{lemma}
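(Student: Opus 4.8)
The plan is to derive the equality $\eta_I \circ m_I = 1_{T(I)}$ from the general rigidity of algebra structures over an idempotent monad, recorded in Lemma \ref{idempotentalg}: any $T$-algebra structure on an object $A$ is forced to coincide with $\eta_A^{-1}$. So the first step is simply to observe that $(I, m_I)$ is itself a $T$-algebra. Instantiating the two $T$-algebra axioms from (\ref{Talg}) at $(A,a) = (I, m_I)$ gives $m_I \circ \eta_I = 1_I$ and $m_I \circ \mu_I = m_I \circ T(m_I)$, and both of these are precisely the unit-component equations for a comonoidal endofunctor and for comonoidal transformations appearing in (\ref{smcendo}) and (\ref{symbimonad}). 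Hence $(I, m_I) \in \mathbb{X}^T$.

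The second step is to feed this into Lemma \ref{idempotentalg}.(\ref{idempotentalg3}): since $(T,\mu,\eta)$ is idempotent and $(I,m_I)$ is a $T$-algebra, that lemma yields $m_I = \eta_I^{-1}$, and in particular $\eta_I$ is invertible. Reading off the two halves of this identity gives both $m_I \circ \eta_I = 1_I$ (already known from the bimonad axioms) and $\eta_I \circ m_I = 1_{T(I)}$, which is exactly the claimed equality. It then follows at once that $\eta_I \colon I \to T(I)$ is an isomorphism with inverse $m_I$, so $T(I) \cong I$.

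There is no real obstacle here: the only verification needed is that $(I, m_I)$ obeys the $T$-algebra axioms, and this is a verbatim match with the bimonad axioms in (\ref{smcendo})–(\ref{symbimonad}). If one prefers to avoid invoking Lemma \ref{idempotentalg}.(\ref{idempotentalg3}), there is an equally short direct argument: applying the functor $T$ to $m_I \circ \eta_I = 1_I$ and using $T(\eta_I) = \mu_I^{-1}$ from Lemma \ref{idempotentalg}.(i) gives $T(m_I) = \mu_I$; then naturality of $\eta$ at the map $m_I \colon T(I) \to I$ gives $\eta_I \circ m_I = T(m_I) \circ \eta_{T(I)}$, and substituting $\eta_{T(I)} = \mu_I^{-1}$ (again Lemma \ref{idempotentalg}.(i)) yields $\eta_I \circ m_I = \mu_I \circ \mu_I^{-1} = 1_{T(I)}$. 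Either way, combining with $m_I \circ \eta_I = 1_I$ completes the proof.
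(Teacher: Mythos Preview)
Your proof is correct and follows essentially the same route as the paper: observe that $(I,m_I)$ is a $T$-algebra (both axioms are exactly the $m_I$-identities in (\ref{symbimonad}), not (\ref{smcendo})), then invoke Lemma \ref{idempotentalg}.(\ref{idempotentalg3}) to get $m_I=\eta_I^{-1}$. Your alternative direct argument via $T(m_I)=\mu_I$ and naturality of $\eta$ is a pleasant bonus not in the paper.
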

\begin{proof} For a bimonad, $(I, m_I)$ is always a $T$-algebra. However, since $(T, \mu, \eta)$ is idempotent, by applying Lemma \ref{idempotentalg}.(\ref{idempotentalg3}), we have that $m_I = \eta_I^{-1}$. Thus $\eta_I$ is an isomorphism with inverse $m_I$, and so $\eta_I \circ m_I = 1_{T(I)}$ as desired. 
\end{proof}

The converse of Lemma \ref{Biidempotentlemma} is not always true. Indeed, there are bimonads such that $\eta_I \circ m_I = 1_{T(I)}$ and $T(I) \cong I$, but where the underlying monad is not idempotent. That said, in Lemma \ref{Hopfidempotentlemma} we will show that the converse is true for Hopf monads. Briefly, a Hopf monad is an idempotent monad if and only if $T(I) \cong I$. The proof is essentially that we have the following chain of isomorphisms: 
\[ TT(A) \cong T(I \otimes T(A)) \cong T(I) \otimes T(A) \cong I \otimes T(A) \cong T(A)  \]

\begin{definition} An \textbf{idempotent Hopf monad} on a monoidal category $(\mathbb{X}, \!\otimes, \!I\!, \!\alpha, \lambda, \rho)$ is a Hopf monad $(T, \mu, \eta, m, m_I)$ on $(\mathbb{X}, \otimes, I, \alpha, \lambda, \rho)$ whose underlying monad $(T, \mu, \eta)$ is an idempotent monad. Similarly, a \textbf{symmetric idempotent Hopf monad} on a symmetric monoidal category $(\mathbb{X}, \otimes, I, \alpha, \lambda, \rho, \sigma)$ is a symmetric Hopf monad $(T, \mu, \eta, m, m_I)$ on $(\mathbb{X}, \otimes, I, \alpha, \lambda, \rho, \sigma)$ which is also an idempotent Hopf monad. 
\end{definition}

  \begin{lemma} \label{Hopfidempotentlemma} Let $(T, \mu, \eta, m, m_I)$ be a Hopf monad on a monoidal category $(\mathbb{X}, \otimes, I, \alpha, \lambda, \rho)$. Then $(T, \mu, \eta, m, m_I)$ is an idempotent Hopf monad if and only if $\eta_I \circ m_I = 1_{T(I)}$. 
\end{lemma}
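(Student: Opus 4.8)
The plan is to handle the two implications separately, with the converse being the substantive one.

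For the forward direction, I would simply observe that an idempotent Hopf monad is in particular an idempotent bimonad, so the identity $\eta_I \circ m_I = 1_{T(I)}$ follows directly from Lemma \ref{Biidempotentlemma}.

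For the converse, assume $\eta_I \circ m_I = 1_{T(I)}$. Since the bimonad axioms (\ref{symbimonad}) already supply $m_I \circ \eta_I = 1_I$, this makes $m_I: T(I) \to I$ an isomorphism with inverse $\eta_I$. The idea is then to realize $\mu_A$ explicitly as the composite of isomorphisms suggested by the chain $TT(A) \cong T(I \otimes T(A)) \cong T(I) \otimes T(A) \cong I \otimes T(A) \cong T(A)$; concretely I would prove that
\[ \mu_A = \lambda_{T(A)} \circ (m_I \otimes 1_{T(A)}) \circ \mathsf{h}^l_{I,A} \circ T(\lambda^{-1}_{T(A)}), \]
every factor of which is an isomorphism: $T(\lambda^{-1}_{T(A)})$ because $\lambda$ is a natural isomorphism and $T$ a functor, $\mathsf{h}^l_{I,A}$ by the Hopf hypothesis, $m_I \otimes 1_{T(A)}$ by the observation just made, and $\lambda_{T(A)}$ trivially. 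Once this identity is in hand, $\mu_A$ is an isomorphism for every $A$, it is natural since $\mu$ is a natural transformation, hence $(T, \mu, \eta)$ is an idempotent monad and $T$ is an idempotent Hopf monad.

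The one genuine computation is the displayed identity, which I would carry out by unfolding $\mathsf{h}^l_{I,A} = (1_{T(I)} \otimes \mu_A) \circ m_{I, T(A)}$ from Definition \ref{Hopfmonaddef}, pushing the resulting $\mu_A$ past $\lambda_{T(A)}$ via naturality of $\lambda$, and then invoking the left-unit coherence axiom for the comonoidal endofunctor $(T, m, m_I)$ — the identity $\ell_{T(A)} \circ (m_I \otimes 1_{T(A)}) \circ m_{I,A} \circ T(\ell^{-1}_A) = 1_{T(A)}$ from (\ref{smcendo}), instantiated at $T(A)$ and with $\ell = \lambda$ — to collapse the remaining tail to the identity. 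I expect this short diagram chase to be the only obstacle; it is routine but does require keeping the unitor coherence (and the notational clash between $\ell$ and $\lambda$) straight.
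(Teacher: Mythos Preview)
Your proposal is correct and follows essentially the same route as the paper: both directions match, and the converse hinges on the same identity $\mu_A = \lambda_{T(A)} \circ (m_I \otimes 1_{T(A)}) \circ \mathsf{h}^l_{I,A} \circ T(\lambda^{-1}_{T(A)})$, which the paper simply cites from \cite[Proposition 2.6]{bruguieres2011hopf} while you spell out its short derivation. (Your subscript $T(\lambda^{-1}_{T(A)})$ is the type-correct one; the paper's $T(\lambda^{-1}_A)$ is a typo.)
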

\begin{proof} By Lemma \ref{Biidempotentlemma}, we already have the $\Rightarrow$ direction. For $\Leftarrow$, first observe that for any bimonad, we have the following identity \cite[Proposition 2.6]{bruguieres2011hopf}: 
\begin{align} \label{hlmu1}
 \lambda_{T(A)} \circ (m_I \otimes 1_{T(A)}) \circ \mathsf{h}^{l}_{I,A} \circ T(\lambda^{-1}_A) = \mu_A
\end{align} 
Now suppose that $\eta_I \circ m_I = 1_{T(I)}$. By definition, we also have that $m_I \circ \eta_I = 1_I$ (\ref{symbimonad}), so $m_I$ is an isomorphism. Also, by the assumption of being a Hopf monad, $\mathsf{h}^{l}_{I,A}$ is also an isomorphism. Therefore by (\ref{hlmu1}), $\mu_A$ is equal to the composite of isomorphisms, and is therefore also an isomorphism. So we conclude that $(T, \mu, \eta)$ is an idempotent monad. 
\end{proof}

Let us now turn our attention to the traced setting. It turns out that every symmetric idempotent bimonad on a traced symmetric monoidal category is a traced monad, which we call idempotent traced monads. In this case, the Eilenberg-Moore category is a sub-traced symmetric monoidal category of the base category.  

\begin{definition} An \textbf{idempotent traced monad} on a traced symmetric monoidal category \\
$(\mathbb{X}, \otimes, I, \alpha, \lambda, \rho, \sigma, \mathsf{Tr})$ is a traced monad $(T, \mu, \eta, m, m_I)$ on $(\mathbb{X}, \otimes, I, \alpha, \lambda, \rho, \sigma, \mathsf{Tr})$ such that \\
$(T, \mu, \eta, m, m_I)$ is also a symmetric idempotent bimonad. 
\end{definition}

\begin{lemma} \label{idempotenttracelemma} Let $(\mathbb{X}, \otimes, I, \alpha, \lambda, \rho, \sigma, \mathsf{Tr})$ be a traced symmetric monoidal category, and let \\
$(T, \mu, \eta, m, m_I)$ be a symmetric idempotent bimonad on the underlying symmetric monoidal category $(\mathbb{X}, \otimes, I, \alpha, \lambda, \rho, \sigma)$. Then $(T, \mu, \eta, m, m_I)$ is an idempotent traced monad on the traced symmetric monoidal category $(\mathbb{X}, \otimes, I, \alpha, \lambda, \rho, \sigma, \mathsf{Tr})$. 
\end{lemma}
\begin{proof} Let $(X,\eta^{-1}_X)$, $(A,\eta^{-1}_A)$, and $(B,\eta^{-1}_B)$ be $T$-algebras and let
\[f: (A,\eta^{-1}_A) \otimes^T (X,\eta^{-1}_X) \to (B,\eta^{-1}_B) \otimes^T (X,\eta^{-1}_X)\] 
be a $T$-algebra morphism. By Lemma \ref{idempotentalg}.(\ref{idempotentalg4}), every map $A \to B$ in the base category gives a $T$-algebra morphism $(A,\eta^{-1}_A) \to (B,\eta^{-1}_B)$. Therefore, $\mathsf{Tr}^X_{A,B}(f): (A,\eta^{-1}_A) \to (B,\eta^{-1}_B)$ is a $T$-algebra morphism, and we conclude that $(T, \mu, \eta, m, m_I)$ is an idempotent traced monad. 
\end{proof}

Similarly, it turns out that every symmetric idempotent Hopf monad is trace-coherent.  

\begin{lemma}\label{idempotenthopftrace}  Let $(\mathbb{X}, \otimes, I, \alpha, \lambda, \rho, \sigma, \mathsf{Tr})$ be a traced symmetric monoidal category and \\
$(T, \mu, \eta, m, m_I)$ be a symmetric idempotent Hopf monad on the underlying symmetric monoidal category $(\mathbb{X}, \otimes, I, \alpha, \lambda, \rho, \sigma)$. Then $(T, \mu, \eta, m, m_I)$ is a trace-coherent Hopf monad on the traced symmetric monoidal category $(\mathbb{X}, \otimes, I, \alpha, \lambda, \rho, \sigma, \mathsf{Tr})$. 
\end{lemma}
\begin{proof} By Lemma \ref{idempotenttracelemma}, $(T, \mu, \eta, m, m_I)$ is a traced monad. Therefore, by Theorem \ref{mainthm}, we have that $(T, \mu, \eta, m, m_I)$ is a trace-coherent Hopf monad. 
\end{proof}

We now state the following crucial identity regarding idempotent traced monads. 

\begin{lemma}\label{traceidempotentlemma} Let $(T, \mu, \eta, m, m_I)$ be an idempotent traced monad on a traced symmetric monoidal category $(\mathbb{X}, \otimes, I, \alpha, \lambda, \rho, \sigma, \mathsf{Tr})$. Then $\mathsf{Tr}^{T(I)}_{I, T(I)}\left( m_{I,I} \circ T(\lambda^{-1}_I) \circ  \lambda_{T(I)} \right) = \eta_{I}$. 
\end{lemma}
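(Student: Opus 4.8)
The statement asks us to compute the trace
$\mathsf{Tr}^{T(I)}_{I, T(I)}\left( m_{I,I} \circ T(\lambda^{-1}_I) \circ \lambda_{T(I)} \right)$
and show it equals $\eta_I$. Since $(T,\mu,\eta,m,m_I)$ is idempotent, I have several simplifications at hand: by Lemma \ref{Biidempotentlemma}, $m_I \colon T(I) \to I$ is an isomorphism with inverse $\eta_I$, so $T(I) \cong I$; and by Lemma \ref{idempotentalg}.(i), $\mu^{-1}_A = \eta_{T(A)} = T(\eta_A)$. The natural move is to transport the problem along the isomorphism $m_I$ (equivalently $\eta_I$) so that $T(I)$ is replaced by $I$, reducing the trace over $T(I)$ to a trace over $I$, which (by Vanishing for the unit, or directly from the axioms since $I$-traces are trivial) just hands back the argument map composed appropriately.

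\textbf{Key steps.} First I would use \textbf{[Sliding]} to move the isomorphism $m_I$ (or its pieces) around the loop: the map being traced has the form $m_{I,I} \circ T(\lambda^{-1}_I) \circ \lambda_{T(I)} \colon I \otimes T(I) \to T(I) \otimes T(I)$, and I want to slide a copy of $m_I$ along the traced wire $T(I)$ to convert the trace index from $T(I)$ to $I$. Concretely, using naturality and the fact that $m_I$ is invertible, I would rewrite the traced map as $(1_{T(I)} \otimes m_I^{-1}) \circ (\text{something}) \circ (1_I \otimes m_I)$ and then apply \textbf{[Sliding]} (\ref{sliding}) to replace $\mathsf{Tr}^{T(I)}$ by $\mathsf{Tr}^{I}$. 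Second, once the trace is over the unit object $I$, I would invoke the fact that tracing over $I$ is essentially trivial — $\mathsf{Tr}^I_{A,B}(g)$ for $g \colon A \otimes I \to B \otimes I$ is just $\rho_B \circ g \circ \rho^{-1}_A$ up to the unit isomorphisms (this follows from Vanishing/the redundant unit-vanishing axiom together with \textbf{[Tightening]} and \textbf{[Yanking]}, or can be cited from \cite[Appendix A]{hasegawa2009traced}). Third, I would assemble the resulting composite of structural isomorphisms, $m_I$, $m_I^{-1} = \eta_I$, $\lambda$'s and $\rho$'s, and check via the bimonad/monoidal coherence that it collapses to $\eta_I$. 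Throughout, the comonoidal unit axioms (\ref{smcendo}) relating $m_{I,I}$, $m_I$, $\lambda$, $\rho$ will be needed to simplify $m_{I,I}$ against the unit isomorphisms.

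\textbf{Alternative and main obstacle.} A cleaner alternative: since $(I, m_I)$ is a $T$-algebra and $(T(I), \mu_I)$ is the free algebra, and since $m_I$ is an algebra isomorphism $(T(I),\mu_I) \xrightarrow{\cong} (I,m_I)$ (idempotence), I could try to exhibit the traced map as (the image under $\mathsf{Tr}$ of) an algebra-morphism expression and appeal to Lemma \ref{Talglemma4} — i.e., show both sides, precomposed with $\eta_I$, agree, which would be a short computation using \textbf{[Tightening]} and the unit conditions. The likely main obstacle is the bookkeeping of structural isomorphisms: $m_{I,I}$, $\lambda_{T(I)}$, $T(\lambda^{-1}_I)$, and the various $\rho$/$\lambda$ coming from the unit-case trace all need to be tracked precisely, and it is easy to be off by a unit isomorphism or a coherence. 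The substantive content — that the answer is forced to be $\eta_I = m_I^{-1}$ — is morally immediate from $T(I) \cong I$ plus triviality of the unit trace; making the LaTeX-level equational chain land exactly on $\eta_I$ (rather than $\eta_I$ conjugated by something that secretly equals the identity) is where care is required. I would expect the final proof to be four to six lines of equational reasoning citing \textbf{[Sliding]}, \textbf{[Tightening]}, the unit-vanishing triviality, and the comonoidal unit axiom (\ref{smcendo}).
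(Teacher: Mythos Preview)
Your main plan is correct and is essentially the paper's approach: exploit $\eta_I \circ m_I = 1_{T(I)}$ (Lemma \ref{Biidempotentlemma}), use the comonoidal unit axiom (\ref{smcendo}) to collapse $m_{I,I} \circ T(\lambda^{-1}_I)$ against $m_I$, and reduce to a trace that is trivial by \textbf{[Yanking]}. The only difference is in the order of moves. The paper first inserts $\eta_I \circ m_I$ on the \emph{output} side and uses \textbf{[Tightening]} to pull $m_I$ inside, where (\ref{smcendo}) immediately gives $(m_I \otimes 1)\circ m_{I,I}\circ T(\lambda^{-1}_I) = \lambda^{-1}_{T(I)}$, so the traced map becomes $1_{I\otimes T(I)}$; only then does it use \textbf{[Sliding]} with $\eta_I,m_I$ to convert $\mathsf{Tr}^{T(I)}(1)$ to $\mathsf{Tr}^{I}(1_{I\otimes I}) = \mathsf{Tr}^{I}(\sigma_{I,I}) = 1_I$ via \textbf{[Yanking]}. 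Your plan slides on the traced wire first and then invokes unit-vanishing; this also works, but the paper's ordering makes the use of (\ref{smcendo}) cleaner and avoids having to cite the derived unit-vanishing fact separately.

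One caveat: your proposed alternative via Lemma \ref{Talglemma4} does not type-check as written. Both sides are maps $I \to T(I)$, so ``precomposing with $\eta_I$'' makes no sense, and the domain $(I,m_I)$ is not literally a free algebra (though it is isomorphic to $(T(I),\mu_I)$ here). You could salvage this by transporting along $m_I$ to maps $T(I)\to T(I)$, but at that point you are back to the main argument anyway.
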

\begin{proof} By Lemma \ref{Biidempotentlemma}, we know that $\eta_I: I \to T(I)$ is an isomorphism with inverse $m_I: T(I) \to I$. Using this fact, we can compute that: 
\begin{align*}
\mathsf{Tr}^{T(I)}_{I, T(I)}\left( m_{I,I} \circ T(\lambda^{-1}_I) \circ  \lambda_{T(I)} \right) &=~ \eta_{I} \circ m_I \circ \mathsf{Tr}^{T(I)}_{I, T(I)}\left( m_{I,I} \circ T(\lambda^{-1}_I) \circ  \lambda_{T(I)} \right) \tag{$\eta_{I} \circ m_I = 1_{T(I)}$ -- Lem.\ref{Biidempotentlemma}} \\
&=~ \eta_{I} \circ \mathsf{Tr}^{T(I)}_{I, I}\left( (m_I \otimes 1_{T(I)}) \circ m_{I,I} \circ T(\lambda^{-1}_I) \circ  \lambda_{T(I)} \right)\tag{\textbf{[Tightening]} -- (\ref{tightening2})} \\ 
&=~ \eta_{I} \circ \mathsf{Tr}^{T(I)}_{I, I}\left( \lambda^{-1}_{T(I)} \circ  \lambda_{T(I)} \right) \tag{Comonoidal Functor -- (\ref{smcendo})} \\ 
&=~ \eta_{I} \circ \mathsf{Tr}^{T(I)}_{I, I}\left( 1_{I \otimes T(I)} \right) \\
&=~ \eta_{I} \circ \mathsf{Tr}^{T(I)}_{I, I}\left( (1_I \otimes \eta_I) \circ (1_I \otimes m_I) \right) \tag{$\eta_{I} \circ m_I = 1_{T(I)}$ -- Lem.\ref{Biidempotentlemma}} \\
&=~ \eta_{I} \circ \mathsf{Tr}^{T(I)}_{I, I}\left( (1_I \otimes m_I) \circ (1_I \otimes \eta_I) \right) \tag{\textbf{[Sliding]} -- (\ref{sliding})} \\ 
&=~ \eta_{I} \circ \mathsf{Tr}^{T(I)}_{I, I}\left( 1_{I \otimes I} \right) \tag{Sym. Bimonad -- (\ref{symbimonad})} \\
&=~ \eta_{I} \circ \mathsf{Tr}^{T(I)}_{I, I}\left( \sigma_{I,I} \right) \tag{Sym. Mon. Cat. Coherence that $1_{I \otimes I} = \sigma_{I,I}$} \\
&=~ \eta_{I} \circ 1_I \tag{\textbf{[Yanking]} -- (\ref{yanking})} \\ 
&=~ \eta_I 
\end{align*} 
So we conclude that the desired equality holds. 
\end{proof}

The converse of Lemma \ref{traceidempotentlemma} is not always true, that is, there are non-idempotent traced monads where the mentioned equality holds. However, the converse is true for trace-coherent Hopf monads. 

\begin{proposition}\label{tracehopfidempotent} Let $(T, \mu, \eta, m, m_I)$ be a trace-coherent Hopf monad of a traced symmetric monoidal category $(\mathbb{X}, \otimes, I, \alpha, \lambda, \rho, \sigma, \mathsf{Tr})$. Then $(T, \mu, \eta, m, m_I)$ is a symmetric idempotent Hopf monad if and only if the following equality holds:
\begin{align} \label{tracemeta}
\mathsf{Tr}^{T(I)}_{I, T(I)}\left( m_{I,I} \circ T(\lambda^{-1}_I) \circ  \lambda_{T(I)} \right) = \eta_{I}
\end{align}
\end{proposition}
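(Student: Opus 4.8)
The plan is to exploit the characterization of idempotent Hopf monads from Lemma~\ref{Hopfidempotentlemma}: since $(T,\mu,\eta,m,m_I)$ is already a Hopf monad, it is a symmetric idempotent Hopf monad if and only if $\eta_I \circ m_I = 1_{T(I)}$. The forward direction of the biconditional is already Lemma~\ref{traceidempotentlemma}, so only the converse needs proof: assuming the trace equation (\ref{tracemeta}), I must derive $\eta_I \circ m_I = 1_{T(I)}$ (and then invoke Lemma~\ref{Hopfidempotentlemma}). Note that $m_I \circ \eta_I = 1_I$ always holds by (\ref{symbimonad}), so it suffices to establish the other composite.

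First I would apply the functor $T$ to both sides of (\ref{tracemeta}) and use the trace-coherence axiom (\ref{tracecoherent}) to rewrite the left-hand side. Concretely, $T\left(\mathsf{Tr}^{T(I)}_{I,T(I)}(g)\right) = \mathsf{Tr}^{T(I)}_{T(I),TT(I)}\left(\mathsf{h}^l_{T(I),I} \circ T(g) \circ \mathsf{h}^{l^{-1}}_{I,I}\right)$ where $g = m_{I,I} \circ T(\lambda^{-1}_I) \circ \lambda_{T(I)}$, while the right-hand side becomes $T(\eta_I)$. The goal is to massage the trace expression on the left — using the fusion operator identities (\ref{h1}), (\ref{h3}), naturality, the comonoidal endofunctor axioms (\ref{smcendo}), and the trace axioms [Tightening], [Sliding], [Yanking] — into something of the form (isomorphism) $\circ$ $\eta_I \circ m_I$ $\circ$ (isomorphism) applied to $T(I)$, so that combining with $T(\eta_I) = \eta_{T(I)} \circ \eta_I \circ m_I$-type manipulations forces $\eta_I \circ m_I = 1_{T(I)}$.

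An alternative and likely cleaner route: by Theorem~\ref{mainthm} a trace-coherent Hopf monad is a traced monad, so the Eilenberg-Moore category $\mathbb{X}^T$ is a traced symmetric monoidal category by Proposition~\ref{EMTSMC}. The pair $(I,m_I)$ is the monoidal unit of $\mathbb{X}^T$, and $(T(I),\mu_I)$ is the free algebra on $I$. The map $\eta_I \circ m_I \colon T(I) \to T(I)$ is the underlying map of an endomorphism of $(T(I),\mu_I)$ in $\mathbb{X}^T$; by Lemma~\ref{Talglemma4} it equals $1_{T(I)}$ iff it does so after precomposing with $\eta_I$, i.e. iff $\eta_I \circ m_I \circ \eta_I = \eta_I$, which is automatic from $m_I \circ \eta_I = 1_I$ — but that shows $\eta_I \circ m_I$ agrees with the identity only as an algebra morphism if we already knew $\eta_I \circ m_I$ is idempotent and an algebra map, which needs the hypothesis. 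The honest use of (\ref{tracemeta}) is: it says the trace of a certain free-algebra endomorphism equals $\eta_I$, and reading this inside $\mathbb{X}^T$ via Proposition~\ref{EMTSMC} and the [Yanking]/[Tightening] axioms of $\mathsf{Tr}^T$ identifies $\eta_I$ with an identity-type composite, giving $T(I) \cong I$ in $\mathbb{X}^T$, hence $\eta_I$ invertible with inverse $m_I$.

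The main obstacle will be the bookkeeping in the trace computation — tracking which coherence isomorphisms ($\lambda$, $\alpha$) and which fusion-operator identities are needed to collapse $\mathsf{Tr}^{T(I)}_{T(I),TT(I)}\left(\mathsf{h}^l_{T(I),I} \circ T(g) \circ \mathsf{h}^{l^{-1}}_{I,I}\right)$ down to an expression in which $m_I$ visibly appears as a one-sided inverse of $\eta_I$; this is essentially the reverse of the chain of equalities in the proof of Lemma~\ref{traceidempotentlemma}, run with trace-coherence supplying the extra step that promotes $T$ past the trace. I expect the identity $\lambda_{T(A)} \circ (m_I \otimes 1_{T(A)}) \circ \mathsf{h}^l_{I,A} \circ T(\lambda^{-1}_A) = \mu_A$ from (\ref{hlmu1}) to be the key bridge, exactly as in Lemma~\ref{Hopfidempotentlemma}, reducing the whole argument to showing $\mu_I$ is invertible and then applying Lemma~\ref{Hopfidempotentlemma}.
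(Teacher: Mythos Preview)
Your setup is right: the forward direction is Lemma~\ref{traceidempotentlemma}, and for the converse it suffices (via Lemma~\ref{Hopfidempotentlemma}) to show $\eta_I \circ m_I = 1_{T(I)}$. Your ``alternative route'' via Theorem~\ref{mainthm} is also the right instinct. But you are circling around the key step without landing on it, and your first route (apply $T$, unwind trace-coherence, chase fusion identities) is far more work than necessary.

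The paper's argument is a two-line application of the traced-monad property. The map
\[
m_{I,I} \circ T(\lambda^{-1}_I) \circ \lambda_{T(I)} \;:\; (I,m_I)\otimes^T (T(I),\mu_I) \;\longrightarrow\; (T(I),\mu_I)\otimes^T (T(I),\mu_I)
\]
is a composite of $T$-algebra morphisms (each of $\lambda_{T(I)}$, $T(\lambda_I^{-1})$, $m_{I,I}$ is one). Since trace-coherent implies traced monad (Theorem~\ref{mainthm}), its trace is a $T$-algebra morphism $(I,m_I)\to(T(I),\mu_I)$. By hypothesis~(\ref{tracemeta}) that trace \emph{is} $\eta_I$. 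So $\eta_I$ is a $T$-algebra morphism, which says exactly
\[
\eta_I \circ m_I \;=\; \mu_I \circ T(\eta_I) \;=\; 1_{T(I)},
\]
the last equality being a monad axiom. Lemma~\ref{Hopfidempotentlemma} finishes.

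Your exposition misidentifies the map inside the trace as a ``free-algebra endomorphism'' (it is not an endomorphism; the domain involves $(I,m_I)$), and your attempt to argue that $\eta_I\circ m_I$ is already a $T$-algebra endomorphism of $(T(I),\mu_I)$ is, as you yourself note, circular without the hypothesis. The missing idea is precisely that the hypothesis forces $\eta_I$ \emph{itself} to be a $T$-algebra morphism; once you see that, there is no trace computation left to do, and none of the fusion-operator bookkeeping you anticipate in your final paragraph is needed.
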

\begin{proof} By Lemma \ref{traceidempotentlemma}, we already have the $\Leftarrow$ direction. For the $\Rightarrow$ direction, assume that (\ref{tracemeta}) holds. By Lemma \ref{Hopfidempotentlemma}, to show that $(T, \mu, \eta)$ is idempotent, it is sufficient to show that $\eta_I \circ m_I = 1_{T(I)}$. First, observe that:
\begin{align*}
\lambda_{T(I)}:  (I, m_I) \otimes (T(I), \mu_I) \to (T(I), \mu_I) &&  T(\lambda^{-1}_I): (T(I), \mu_I) \to (T(I \otimes I), \mu_{I \otimes I})
\end{align*}
\[ m_{I,I}: (T(I \otimes I), \mu_{I \otimes I}) \to (T(I), \mu_I) \otimes^T (T(I), \mu_I)  \]
are all $T$-algebra morphisms, and so therefore their composite:
\[m_{I,I} \circ T(\lambda^{-1}_I) \circ  \lambda_{T(I)}: (I, m_I) \otimes^T (T(I), \mu_I) \to (T(I), \mu_I) \otimes^T (T(I), \mu_I)\]
is also a $T$-algebra morphism. Since $(T, \mu, \eta, m, m_I)$ is a trace-coherent Hopf monad, and therefore a traced monad, the trace of the above composite
\[\mathsf{Tr}^{T(I)}_{I, T(I)}\left( m_{I,I} \circ T(\lambda^{-1}_I) \circ  \lambda_{T(I)} \right): (I, m_I) \to (T(I), \mu_I)\] 
is also a $T$-algebra morphism. Therefore by (\ref{tracemeta}), we have that $\eta_I: (I, m_I) \to (T(I), \mu_I)$ is a $T$-algebra morphism. Explicitly, this means that the following equality holds: 
\begin{align*}
  \eta_I \circ m_I = \mu_I \circ T(\eta_I)  
\end{align*}
However, by the monad identities (\ref{monadeq}), the right-hand side of the above equality is always equal to the identity, $\mu_I \circ T(\eta_I)  =1_{T(I)}$. So we have that $\eta_I \circ m_I = 1_{T(I)}$. So by Lemma \ref{Hopfidempotentlemma}, we conclude that $(T, \mu, \eta)$ is idempotent. 
\end{proof}
    
\section{Traced CoCartesian Monoidal Categories}\label{coCarttracemonadsec}

Any category with finite coproducts is a symmetric monoidal category, called a coCartesian monoidal category. Traced coCartesian monoidal categories are the dual notion of traced Cartesian monoidal categories, in the sense that the opposite category of one is a form of the other. In a traced coCartesian monoidal category, the trace operator captures the notion of feedback via iteration. In this section, we will explain why every trace-coherent Hopf monad on a traced coCartesian monoidal category is in fact idempotent. 

As for the product case, we are interested in the symmetric monoidal structure induced by finite coproducts. So we will define coCartesian monoidal categories as symmetric monoidal categories whose monoidal product is a coproduct and whose monoidal unit is an initial object. Once again, any category with finite coproducts is a coCartesian monoidal category where the symmetric monoidal structure can be fully derived from the universal property of the coproduct, and conversely, every coCartesian monoidal category is a category with finite coproducts. 

\begin{definition} \cite[Section 6.2 \& Section 6.4]{selinger2010survey} A \textbf{coCartesian monoidal category} is a symmetric monoidal category $(\mathbb{X}, \oplus, \bot, \alpha, \lambda, \rho, \sigma)$ whose monoidal structure is a given by \textbf{finite coproducts}, that is: 
\begin{enumerate}[{\em (i)}]
\item The monoidal unit $\bot$ is an \textbf{initial object}, that is, for every object $A$ there exists a unique map $i_A: \bot \to A$. 
\item For every pair of objects $A$ and $B$, $A \oplus B$ a \textbf{coproduct} of $A$ and $B$ with \textbf{injection} maps $\iota_0: A \to A \oplus B$ and $\iota_1: B \to A \oplus B$ defined as following composites: 
\begin{align*}
\iota_0 :=   \xymatrixcolsep{3pc}\xymatrix{ A \ar[r]^-{\rho^{-1}_A} & A \oplus \bot \ar[r]^-{1_A \times i_B} & A \oplus B } &&   \iota_1 :=   \xymatrixcolsep{3pc}\xymatrix{ B \ar[r]^-{\lambda^{-1}_B} & \bot \oplus A \ar[r]^-{i_A \times 1_B} & A \oplus B }
\end{align*}
that is, for every pair of maps $f_0: A \to C$ and $f_1: B \to C$, there is a \emph{unique} map ${[ f_0, f_1 ]: A \oplus B \to C}$, called the \textbf{copairing} of $f_0$ and $f_1$, such that:
\begin{align*}
    [f_0, f_1] \circ \iota_0 = f_0 && [f_0, f_1] \circ \iota_1 = f_1
\end{align*}
\end{enumerate} 
A \textbf{traced coCartesian monoidal category} is a traced symmetric monoidal category \\
$(\mathbb{X}, \oplus, \bot, \alpha, \lambda, \rho, \sigma, \mathsf{Tr})$ whose underlying symmetric monoidal category $(\mathbb{X}, \oplus, \bot, \alpha, \lambda, \rho, \sigma)$ is a coCartesian monoidal category. 
\end{definition}

A traced coCartesian monoidal category can equivalently be characterized as a coCartesian monoidal category $\mathbb{X}$ equipped with an iteration operator, which is a family of operators $\mathsf{Iter}^X_A: \mathbb{X}(X, A \oplus X) \to \mathbb{X}(X, A)$, such that the dual axioms of a Conway operator hold. Since iteration operators do not play a crucial role in this paper, we will not review the full definition here and invite curious readers to see \cite[Section 6.4]{selinger2010survey}. 

Now, it is first important to mention that unlike in the Cartesian case, not every monad on a coCartesian monoidal category is necessarily a (symmetric) bimonad. Furthermore, given a symmetric bimonad on a coCartesian monoidal category, while the Eilenberg-Moore category will be a symmetric monoidal category, it is not automatically the case that the Eilenberg-Moore category is again a coCartesian monoidal category. As such, for a traced monad on a traced coCartesian monoidal category, the Eilenberg-Moore category will be a traced symmetric monoidal category, but not necessarily a traced coCartesian monoidal category. So in particular, an arbitrary traced monad does not necessarily lift the iteration operator. Therefore, in general, traced monads on traced coCartesian monoidal categories cannot be described in terms of the induced iteration operator. 

Now let us prove that trace-coherent Hopf monads on traced coCartesian monoidal categories must be idempotent. First, let us prove a more general result in the case when the monoidal unit is an initial object. 

\begin{lemma} Let $(\mathbb{X}, \otimes, I, \alpha, \lambda, \rho, \sigma, \mathsf{Tr})$ be a traced symmetric monoidal category such that the unit $I$ is an initial object, and let $(T, \mu, \eta, m, m_I)$ be a symmetric Hopf monad on the underlying symmetric monoidal category $(\mathbb{X}, \otimes, I, \alpha, \lambda, \rho, \sigma)$. Then $(T, \mu, \eta, m, m_I)$ is a trace-coherent Hopf monad (or equivalently a traced monad) on $(\mathbb{X}, \otimes, I, \alpha, \lambda, \rho, \sigma, \mathsf{Tr})$ if and only if $(T, \mu, \eta, m, m_I)$ is a symmetric idempotent Hopf monad.  
\end{lemma}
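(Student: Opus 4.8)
The strategy is to obtain both implications almost immediately from results already proved in this section, using only the observation that initiality of $I$ trivialises the hom-set $\mathbb{X}(I, T(I))$.

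For the ``if'' direction, suppose $(T, \mu, \eta, m, m_I)$ is a symmetric idempotent Hopf monad. Then Lemma \ref{idempotenthopftrace} already tells us it is a trace-coherent Hopf monad, and hence a traced monad by Theorem \ref{mainthm}. So nothing more is needed here.

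For the ``only if'' direction, suppose $(T, \mu, \eta, m, m_I)$ is a trace-coherent Hopf monad. By Proposition \ref{tracehopfidempotent}, it suffices to verify the equality (\ref{tracemeta}),
\[
\mathsf{Tr}^{T(I)}_{I, T(I)}\left( m_{I,I} \circ T(\lambda^{-1}_I) \circ \lambda_{T(I)} \right) = \eta_{I} .
\]
Both sides of this equation are morphisms of type $I \to T(I)$: the left-hand side because the trace operator $\mathsf{Tr}^{T(I)}_{I, T(I)}$ sends a morphism $I \otimes T(I) \to T(I) \otimes T(I)$ to one of type $I \to T(I)$, and the right-hand side by definition of $\eta$. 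Since $I$ is an initial object, $\mathbb{X}(I, T(I))$ is a singleton, so the two morphisms are forced to be equal and (\ref{tracemeta}) holds. Proposition \ref{tracehopfidempotent} then yields that $(T, \mu, \eta, m, m_I)$ is a symmetric idempotent Hopf monad.

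The only point requiring a little care is the bookkeeping of the source and target of the trace in (\ref{tracemeta}), so that initiality of $I$ can indeed be invoked on the correct hom-set; beyond that there is no real obstacle, since the substantive work is already carried out in Lemma \ref{idempotenthopftrace} and Proposition \ref{tracehopfidempotent}. Note in passing that (\ref{tracemeta}) in fact holds for \emph{every} symmetric Hopf monad on such a category, whether or not it is trace-coherent; what the trace-coherence hypothesis buys us is precisely the right to feed this equality into Proposition \ref{tracehopfidempotent}.
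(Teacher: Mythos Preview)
Your proof is correct and follows essentially the same route as the paper: the ``if'' direction is Lemma \ref{idempotenthopftrace}, and the ``only if'' direction observes that initiality of $I$ forces the equality (\ref{tracemeta}) between two maps $I\to T(I)$, after which Proposition \ref{tracehopfidempotent} finishes the job. Your additional remark that (\ref{tracemeta}) holds for any symmetric Hopf monad here, with trace-coherence only needed to invoke Proposition \ref{tracehopfidempotent}, is a nice clarification not made explicit in the paper.
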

\begin{proof} The $\Leftarrow$ direction follows from Lemma \ref{idempotenthopftrace}. For the $\Rightarrow$ direction, assume that $I$ is an initial object. By the universal property of an initial object, there is a unique map of type $I \to T(I)$. Therefore, it follows that $\mathsf{Tr}^{T(I)}_{I, T(I)}\left( m_{I,I} \circ T(\lambda^{-1}_I) \circ  \lambda_{T(I)} \right) = \eta_{I}$. Then by Proposition \ref{tracehopfidempotent}, it follows that $(T, \mu, \eta, m, m_I)$ is a symmetric idempotent Hopf monad. 
\end{proof}

By definition, the monoidal unit of a traced coCartesian monoidal category is an initial object. Therefore, we conclude that:  

\begin{corollary}\label{tracecoherentcocart} Let $(\mathbb{X}, \oplus, \bot, \alpha, \lambda, \rho, \sigma, \mathsf{Tr})$ be a traced coCartesian monoidal category, and let \\
$(T, \mu, \eta, m, m_I)$ be a symmetric Hopf monad on the underlying symmetric monoidal category \\
$(\mathbb{X}, \oplus, \bot, \alpha, \lambda, \rho, \sigma)$. Then $(T, \mu, \eta, m, m_I)$ is a trace-coherent Hopf monad (or equivalently a traced monad) on $(\mathbb{X}, \oplus, \bot, \alpha, \lambda, \rho, \sigma, \mathsf{Tr})$ if and only if $(T, \mu, \eta, m, m_I)$ is a symmetric idempotent Hopf monad. 
\end{corollary}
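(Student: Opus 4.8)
The statement is an immediate specialisation of the preceding lemma, so the plan is simply to invoke it. The key observation is that in any traced coCartesian monoidal category $(\mathbb{X}, \oplus, \bot, \alpha, \lambda, \rho, \sigma, \mathsf{Tr})$, the monoidal unit $\bot$ is by definition an initial object. Hence the hypothesis of the lemma ``the unit $I$ is an initial object'' is satisfied with $I = \bot$ and $\otimes = \oplus$. Applying that lemma verbatim to the underlying symmetric monoidal category $(\mathbb{X}, \oplus, \bot, \alpha, \lambda, \rho, \sigma)$ and the given symmetric Hopf monad $(T, \mu, \eta, m, m_I)$ yields exactly the claimed equivalence: $(T, \mu, \eta, m, m_I)$ is a trace-coherent Hopf monad (equivalently a traced monad, by Theorem~\ref{mainthm}) if and only if it is a symmetric idempotent Hopf monad.

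There is essentially no obstacle here beyond recording the reduction carefully. If one wanted to avoid citing the lemma and argue directly, the plan would be: for the $\Leftarrow$ direction, use Lemma~\ref{idempotenthopftrace}, which says every symmetric idempotent Hopf monad is trace-coherent. For the $\Rightarrow$ direction, observe that since $\bot$ is initial there is a \emph{unique} morphism $\bot \to T(\bot)$, so in particular the two parallel morphisms $\mathsf{Tr}^{T(\bot)}_{\bot, T(\bot)}\!\left( m_{\bot,\bot} \circ T(\lambda^{-1}_\bot) \circ \lambda_{T(\bot)} \right)$ and $\eta_{\bot}$ must coincide; this is precisely equation~(\ref{tracemeta}) of Proposition~\ref{tracehopfidempotent}, whose hypothesis (being a trace-coherent Hopf monad) we are assuming. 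That proposition then delivers that $(T, \mu, \eta)$ is idempotent, and combined with it being a symmetric Hopf monad we conclude it is a symmetric idempotent Hopf monad.

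The only mildly delicate point to keep in mind is the naming: the earlier lemma is phrased for a general monoidal unit $I$ that happens to be initial, while the corollary uses the coCartesian notation $\oplus$, $\bot$; one should make explicit that the substitution $I \rightsquigarrow \bot$, $\otimes \rightsquigarrow \oplus$ is legitimate, which it is because a traced coCartesian monoidal category is by definition a traced symmetric monoidal category whose tensor is the coproduct and whose unit $\bot$ is the (necessarily initial) object. Thus the corollary follows with no further work.
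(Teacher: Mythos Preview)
Your proposal is correct and matches the paper's approach exactly: the paper simply remarks that by definition the monoidal unit of a traced coCartesian monoidal category is an initial object, and then invokes the preceding lemma. Your optional unpacking of the direct argument also mirrors the proof of that lemma in the paper.
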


\section{Separating Examples}\label{counterexamplesec}

In this section we provide separating examples to show that not all traced monads are Hopf monads, and also that not all symmetric Hopf monads are trace-coherent (or equivalently traced monads).  

Our first example is a traced monad on a compact closed category that lifts the traced monoidal structure but not the compact closed structure, and is therefore not a Hopf monad. 

\begin{example}\label{Zexsep} \normalfont Let $\mathbb{Z}$ be the set of integers. Let $\mathbb{Z}_{\leq}$ be the standard poset category, that is, the category whose objects are integers $n \in \mathbb{Z}$, where there is a map $n \to m$ if and only if $n \leq m$. $\mathbb{Z}_{\leq}$ is a compact closed category where the monoidal product is given by addition, so $n \otimes m = n +m$ and $I = 0$, the dual is given by the negative $n^\ast = -n$, the cap is the equality $0 = n + (-n)$, and the cup is the equality $(-n) + n = 0$. The induced trace operator records the fact that if $n + x \leq m + x$ then $n \leq m$. Now define the endofunctor $N: \mathbb{Z}_{\leq} \to \mathbb{Z}_{\leq}$ which maps negative integers to zero and non-negative integers to themselves: 
\[N(n) = \begin{cases}
n & \text{if $n \geq 0$} \\
0  & \text{if $n < 0$}
\end{cases} \]
Then $N$ is an idempotent monad since $n \leq N(n)$, which gives the unit, and $NN(n) = N(n)$, which gives the multiplication that is an isomorphism (in this case the identity). $N$ is also a symmetric idempotent bimonad since $N(n+m) \leq N(n) + N(m)$ and $N(0) = 0$. Therefore by Lemma \ref{idempotenttracelemma}, $N$ is an idempotent traced monad. However, $N$ is not a Hopf monad since $N(n + N(m))$ does not equal $N(n) + N(m)$ for all $n, m \in \mathbb{Z}$. Indeed, setting $n= -2$ and $m = 1$, we have that:
\[N(-2) + N(1) = 0 + 1 = 1\] 
but on the other hand, we have that:
\[N(-2 + N(1)) = N(-2 +1) = N(-1) = 0\] 
So $N(-2) + N(1) \neq N(-2 + N(1))$, and so $N$ is not a Hopf monad. So we conclude that $N$ is a traced monad which is not a Hopf monad.
\end{example}

Our next example is a traced monad induced by a cocommutative bimonoid which is not a Hopf monoid, and therefore not a Hopf monad. This example is particularly important in domain theory: 

\begin{example}\normalfont Let $\omega\text{-}\mathsf{CPPO}$ be the category whose objects are $\omega$-complete partial orders with a bottom element ($\omega$-cppo) and whose maps are (Scott) continuous functions between them. $\omega\text{-}\mathsf{CPPO}$ is a traced Cartesian (closed) monoidal category where the monoidal product is given by the Cartesian product, $X \otimes Y = X \times Y$ and $I = \lbrace \bot \rbrace$, and where the Conway operator and trace operator are induced by the parametrized Tarski least fixed-point operator. Indeed, for any continous function $f: X \to X$, there exists a least fixed point for $f$, that is, there exists an $\mathsf{fix}(f) \in X$ such that $f\left( \mathsf{fix}(f) \right) = \mathsf{fix}(f)$ and for every $x \in X$ such that $f(x) = x$, we have that $\mathsf{fix}(f) \leq x$. So for a continuous function $g: A \times X \to X$, its parametrized fixed point $\mathsf{Fix}^X_A(g): A \to X$ is defined as the least fixed point of the curry of $g$, that is, $\mathsf{Fix}^X_A(g)(a) = \mathsf{fix}\left( g(a,-) \right)$. Then for a continuous function $h: A \times X \to B \times X$, where $h(a,x) = \left( h_0(a,x)_B, h_1(a,x) \right)$ for continuous functions $h_0: A \times X \to B$ and $h_1: A \times X \to X$, its trace ${\mathsf{Tr}^X_{A,B}(h): A \to B}$ is defined as follows:
\[ \mathsf{Tr}^X_{A,B}(h)(a) = h_0\left( a, \mathsf{fix}\left( h_1(a,-) \right) \right)  \]
Now consider the Sierpinski space $\Sigma$, which is the two-element $\omega$-cppo where one is the bottom element and the other is the top element, $\Sigma=\{\bot\leq\top\}$. The Sierpinski space $\Sigma$ is a monoid where the multiplication $(-)\wedge(-):\Sigma\times\Sigma\rightarrow\Sigma$ is defined as follows:
\begin{align*}
    \top\wedge\top=\top && \top\wedge\bot= \bot && \bot\wedge\top=\bot && \bot\wedge\bot=\bot
\end{align*}
and the unit is the top element $\top$ (that is, $u: \lbrace \bot \rbrace \to \Sigma$ is defined as $u(\bot) = \top$). Now recall that a cocommutative bimonoid can be defined as a cocommutative comonoid which is also a monoid such that the multiplication and unit are comonoid morphisms. Furthermore, in a Cartesian monoidal category, every object has a unique cocommutative comonoid structure and every map is a comonoid morphism. As such, every monoid in a Cartesian monoidal category is uniquely a cocommutative bimonoid, since the multiplication and unit are comonoid morphisms. So in $\mathbf{Cppo}$, $\Sigma$ is a cocommutative bimonoid where the comultiplication $\Delta: \Sigma \to \Sigma \times \Sigma$ and counit $e: \Sigma \to \lbrace \bot \rbrace$ are defined as follows: 
\begin{align*}
    \Delta(\bot)=(\bot, \bot) && \Delta(\top)=(\top, \top) && e(\top) = \bot && e(\bot) = \bot 
\end{align*}
Therefore $\Sigma \times -$ is a symmetric bimonad on $\omega\text{-}\mathsf{CPPO}$. By Lemma \ref{fixedmonad}, to show that $\Sigma \times -$ is also a traced monad, it suffices to show that Conway operator of $\omega\text{-}\mathsf{CPPO}$ lifts to the Eilenberg-Moore category, in other words, that the parametrized fixed point of a $\Sigma$-module morphism is again a $\Sigma$-module morphism. To prove this we must first discuss \emph{strict} functions. A continuous function $h: X \to Y$ is strict if it preserves the bottom element, that is, $h(\bot) = \bot$. Furthermore, strict functions are compatible with the Conway operator, that is, if $h: X \to Y$ is strict, then any for continuous functions $f: A \times X \to X$ and $g: A \times Y \to Y$ such that $g \circ (1_A \times h) = h \circ f$, then $\mathsf{Fix}^Y_A(g) = h \circ \mathsf{Fix}^X_A(f)$ \cite[Section 5.3]{hasegawa2004uniformity}. Explicitly, if for all $a\in A$ and $x \in X$ the left side equality holds, then the right side equality holds: 
\begin{align}\label{strictfixed}
g(a, h(x)) = h(f(a,x)) \Longrightarrow \mathsf{fix}(g(a,-)) = h(\mathsf{fix}(f(a,-))  
\end{align}
Next, let us discuss $\Sigma$-modules. A $\Sigma$-module $(A, (-) \bullet (-))$ amounts to a $\omega$-cppo $A$ equipped with a continuous map $(-) \bullet (-):\Sigma\times A \to A$ such that for all $a \in A$ and $y_1,y_2 \in \Sigma$: 
\begin{align*}
\top \bullet a = a && (y_1 \wedge y_2) \bullet a = y_1 \bullet (y_2 \bullet a)
\end{align*}
Now let $(A, (-) \bullet (-))$ and $(X, (-) \ast (-))$ be $\Sigma$-modules, and let 
\[f: (A, (-) \bullet (-)) \times^\Sigma (X, (-) \ast (-)) \to (X, (-) \ast (-))\] 
be a $\Sigma$-module morphisms, that is, the following equality holds for all $y \in \Sigma$, $a \in A$, and $x \in X$: 
\begin{align}\label{sigmamorph}
 f \left( y \bullet a, y \ast x \right) = y \ast f(x,a)    
\end{align}
We need to show that $\mathsf{Fix}^X_A(f)$ is also a $\Sigma$-module morphism. Now for all $y \in \Sigma$, since $(y,\bot) \leq (\top, \bot)$ it follows that $y \ast \bot \leq \top \ast \bot = \bot$, but since $\bot$ is the bottom element, this implies that $y \ast \bot = \bot$, and therefore $y \ast (-): X \to X$ is strict. Therefore applying (\ref{strictfixed}) to (\ref{sigmamorph}), we then have the following: 
\begin{align}
 f \left( y \bullet a, y \ast x \right) = y \ast f(x,a) \Rightarrow  \mathsf{fix}\left(  f \left( y \bullet a, - \right) \right) = y \ast \mathsf{fix}(f(a,-)) 
\end{align}
Re-expressing the right-hand-side, we get that: 
\[ \mathsf{Fix}^X_A(f)(y \bullet a) = y \ast \mathsf{Fix}^X_A(f)(a) \] 
So $\mathsf{Fix}^X_A(f): (A, (-) \bullet (-)) \to (X, (-) \ast (-))$ is a $\Sigma$-module morphism. So we conclude that $\Sigma \times -$ is a traced monad on $\mathbf{Cppo}$. However, $\Sigma \times -$ is not a Hopf monad since $\Sigma$ is not a Hopf monoid. Indeed, if $\Sigma$ was a Hopf monoid, there would be a continuous map $S: \Sigma \to \Sigma$ where in particular $S(\bot) \wedge \bot = \top$. However, by definition $x \wedge \bot = \bot$ for all $x \in \Sigma$. Since $\bot \neq \top$, such an $S$ cannot exist and therefore $\Sigma$ cannot have an antipode, and so $\Sigma$ is not a Hopf monoid. Therefore, $\Sigma \times -$ is a traced monad which is not a Hopf monad. 
\end{example}

The above example shows that for a cocommutative bimonoid, it is not necessary for it to also be a cocommutative Hopf monoid for its induced symmetric bimonad to be a traced monad. That said, here is an example of a cocommutative bimonoid whose induced symmetric bimonad is not a traced monad. 

\begin{example} \normalfont Consider again the Sierpinski space $\Sigma$ in the traced Cartesian monoidal category $\omega\text{-}\mathsf{CPPO}$ as in the previous example. The Sierpinski space $\Sigma$ has another cocommutative bimonoid structure, where the comonoid structure is the same as in the above example, but where this time the monoid structure is given by unit element being the bottom element $\bot$ and the multiplication $(-) \vee (-): \Sigma \times \Sigma \to \Sigma$ defined as follows: 
\begin{align*}
\top \vee \top = \top && \top \vee \bot = \top && \bot \vee \top = \top && \bot \vee \bot = \bot     
\end{align*}
To avoid confusion, we will denote the Sierpinski space with this cocommutative bimonoid structure by $\Sigma_\vee$. Then $\Sigma_\vee \times -$ is a symmetric bimonad on $\omega\text{-}\mathsf{CPPO}$. In general, every monoid is a module over itself where the action is the multiplication, so $(\Sigma_\vee, (-) \vee (-))$ is a $\Sigma_\vee$-module. We also have that the projection:
\[\pi_0: (\Sigma_\vee, (-) \vee (-)) \times^{\Sigma_\vee} (\Sigma_\vee, (-) \vee (-)) \to (\Sigma_\vee, (-) \vee (-))\]
is a $\Sigma_\vee$-module morphism. Now the parametrized fixed point of the projection \[\mathsf{Fix}^{\Sigma_\vee}_{\Sigma_\vee}(\pi_0): \Sigma_\vee \to \Sigma_\vee\] 
is the map which sends everything to the bottom element, $\mathsf{Fix}^{\Sigma_\vee}_{\Sigma_\vee}(\pi_0)(x) = \bot$. However, $\mathsf{Fix}^{\Sigma_\vee}_{\Sigma_\vee}(\pi_0)$ is not a $\Sigma_\vee$-module morphism $(\Sigma_\vee, (-) \vee (-)) \to (\Sigma_\vee, (-) \vee (-))$. Indeed, on the one hand $\mathsf{Fix}^{\Sigma_\vee}_{\Sigma_\vee}(\pi_0)(\top \vee \top)= \bot$, but on the other hand $\top \vee \mathsf{Fix}^{\Sigma_\vee}_{\Sigma_\vee}(\pi_0)(\top) = \top \vee \bot = \top$. Therefore, $\mathsf{Fix}^{\Sigma_\vee}_{\Sigma_\vee}(\pi_0)(\top \vee \top) \neq \top \vee \mathsf{Fix}^{\Sigma_\vee}_{\Sigma_\vee}(\pi_0)(\top)$, so $\mathsf{Fix}^{\Sigma_\vee}_{\Sigma_\vee}(\pi_0)$ is not a $\Sigma_\vee$-module morphism. Thus the Conway operator $\mathsf{Fix}$ does not lift to the category of $\Sigma_\vee$-modules and so, as explained in Section \ref{carttracesec}, neither does the trace operator. So $\Sigma_\vee \times -$ is not a traced monad. 
\end{example}

Providing an example of a symmetric Hopf monad that is not a traced monad is surprisingly not as straightforward as one would hope. Indeed, the fact that every symmetric Hopf monad on a compact closed category is a traced monad covers a large class of examples of traced symmetric monoidal categories, while the fact that every cocommutative Hopf monoid in a traced symmetric monoidal category induces a traced monad covers a large class of examples of symmetric Hopf monads. Therefore we must look beyond these sorts of examples, and unfortunately, the desired sort of example does not seem to arise naturally or appear in the literature. 

There are two key concepts to understanding the following example. The first is the notion of a symmetric monoidal category with coproducts such that the monoidal product distributes over the coproduct. The second is the fact that a symmetric monoidal category can have more than one trace operator. 

\begin{example} \normalfont Let $\mathbb{X}$ be a category with finite coproducts $\oplus$, injection maps $\iota_j$, copairing operator $[-,-]$, and initial object $\bot$. For every object $A$, define the codiagonal map $\nabla_A: A \oplus A \to A$ as the copairing of the identity map with itself, $\nabla_a = [1_A, 1_A]$. Consider the category $\mathbb{X} \times \mathbb{X}$, whose objects and maps are pairs of objects and maps of $\mathbb{X}$, and where composition and identities are defined pointwise. Define the endofunctor $T: \mathbb{X} \times \mathbb{X} \to \mathbb{X} \times \mathbb{X}$ on objects as $T(A, B) = (A \oplus B, A \oplus B)$ and on maps as $T(f,g) = (f \oplus g, f \oplus g)$, and also define the natural transformations:
\begin{align*}
    \mu_{(A,B)}: TT(A,B) \to T(A,B) && \eta_{(A,B)}: (A,B) \to T(A,B)
\end{align*}
 respectively as follows: 
\[ \mu_{(A,B)} :=   \xymatrixcolsep{5pc}\xymatrix{ TT(A,B) = \left( (A \oplus B) \oplus (A \oplus B), (A \oplus B) \oplus (A \oplus B) \right) \ar[d]^-{\left( \nabla_{A \oplus B}, \nabla_{A \oplus B} \right) } \\  (A \oplus B, A \oplus B) = T(A,B)  } \] 
\[ \eta_{(A,B)} :=  \xymatrixcolsep{5pc}\xymatrix{ (A,B) \ar[r]^-{\left( \iota_0, \iota_1  \right) } & (A \oplus B, A \oplus B) = T(A,B)  }\] 
Then $(T, \mu, \eta)$ is a monad on $\mathbb{X} \times \mathbb{X}$. The Eilenberg-Moore category is isomorphic to the base category, that is, $(\mathbb{X} \times \mathbb{X})^T \cong \mathbb{X}$, and so the forgetful functor can be identified with the diagonal functor $\Delta: \mathbb{X} \to \mathbb{X} \times \mathbb{X}$, defined on objects as $\Delta(X) =(X,X)$ and on maps as $\Delta(f) = (f,f)$. In particular, note that we may re-express $T$ using $\Delta$ by $T(A,B) = \Delta(A \oplus B)$ and $T(f,g) = \Delta(f \oplus g)$, and so we may write the monad multiplicaiton as $\mu_{(A,B)} = \Delta\left(\nabla_{A \oplus B}\right)$. Now suppose that $\mathbb{X}$ is also a symmetric monoidal category with monoidal product $\otimes$ and monoidal unit $I$. Then $\mathbb{X} \times \mathbb{X}$ is also a symmetric monoidal category whose structure is defined pointwise, that is, the monoidal product is $(A,B) \otimes (C,D) = (A \otimes B, C \otimes D)$ and the monoidal unit $(I,I)$. Now suppose that in $\mathbb{X}$, the coproduct structure distributes over the monoidal structure. Explicitly, the natural transformation $\partial_{A,B,C}: (A \otimes C) \oplus (B \otimes C) \to (A \oplus B) \otimes C$ defined as: 
\[ \partial_{A,B,C} :=   \xymatrixcolsep{5pc}\xymatrix{ (A \otimes C) \oplus (B \otimes C) \ar[r]^-{ [ \iota_0 \otimes 1_C, \iota_1 \otimes 1_C ] } & (A \oplus B) \otimes C }\] 
and the unique map from the initial object $i_{A \otimes \bot}: \bot \to \bot \otimes A$ are isomorphisms, so $ (A \otimes C) \oplus (B \otimes C) \cong (A \oplus B) \otimes C$ and $\bot \cong \bot \otimes A$. Then define the natural transformation 
\[m_{(A,B),(C,D)}:  T\left( (A,B) \otimes (C,D) \right) \to  T(A,B) \otimes T(C,D)\] 
and the map $m_(I,I): T(I,I) \to (I,I)$ respectively as follows (as to not overload notation, we write them using $\Delta$): 
\[ m_{(A,B),(C,D)} :=   \xymatrixcolsep{5pc}\xymatrix{\Delta\left(  (A \otimes C) \oplus (B \otimes D) \right) \ar[r]^-{\Delta \left( [ \iota_0 \otimes \iota_0, \iota_1 \otimes \iota_1 ] \right) } &  \Delta\left((A \oplus B) \otimes (C \oplus D)  \right) } \]
\[ m_{(I,I)} :=   \xymatrixcolsep{5pc}\xymatrix{\Delta\left( I \oplus I \right) \ar[r]^-{\Delta \left( \nabla_{I} \right) } & \Delta(I)  } \]
Then $(T, \mu, \eta, m, m_{(I,I)})$ is a symmetric Hopf monad on $\mathbb{X} \times \mathbb{X}$ where the left fusion operator and its inverse: 
\begin{align*}
    {\mathsf{h}^l}_{(A,B), (C,D)}: T\left( (A,B) \otimes T(C,D) \right) \to T(A,B) \otimes T(C,D) \\ 
{\mathsf{h}^l}^{-1}_{(A,B), (C,D)}: T(A,B) \otimes T(C,D) \to T\left( (A,B) \otimes T(C,D) \right)    
\end{align*}
are given by the distribution isomorphism (again writing them using $\Delta$): 
\[ \mathsf{h}^l_{(A,B), (C,D)} :=   \xymatrixcolsep{5pc}\xymatrix{\Delta\left( \left(A \otimes (C \oplus D) \right) \oplus \left(B \otimes (C \oplus D) \right) \right) \ar[r]^-{\Delta \left( \partial_{A,B,C \oplus D} \right) } &  \Delta\left((A \oplus B) \otimes (C \oplus D)  \right) } \]
\[ {\mathsf{h}^l}^{-1}_{(A,B), (C,D)} :=   \xymatrixcolsep{5pc}\xymatrix{  \Delta\left((A \oplus B) \otimes (C \oplus D)  \right) \ar[r]^-{\Delta \left( \partial^{-1}_{A,B,C \oplus D} \right) } &  \Delta\left( \left(A \otimes (C \oplus D) \right) \oplus \left(B \otimes (C \oplus D) \right) \right)  } \]
The induced symmetric monoidal structure on $\mathbb{X}$ (seen as the Eilenberg-Moore category) is precisely the one we started with on $\mathbb{X}$. Lastly, suppose that $\mathbb{X}$ has two distinct trace operators $\mathsf{Tr}$ and $\overline{\mathsf{Tr}}$. This induces a trace operator $\mathsf{Tr} \times \overline{\mathsf{Tr}}$ on $\mathbb{X} \times \mathbb{X}$ defined pointwise, that is:
\[(\mathsf{Tr} \times \overline{\mathsf{Tr}})(f,g) = (\mathsf{Tr}(f), \overline{\mathsf{Tr}}(g))\] 
So $\mathbb{X} \times \mathbb{X}$ is a traced symmetric monoidal category. For either of the trace operators on $\mathbb{X}$, the diagonal functor $\Delta: \mathbb{X} \to \mathbb{X} \times \mathbb{X}$, interpreted as the forgetful functor, does not preserve the trace operator since clearly: 
\begin{align*}
 (\mathsf{Tr} \times \overline{\mathsf{Tr}})\left( \Delta(f) \right) = (\mathsf{Tr} \times \overline{\mathsf{Tr}})(f,f) = (\mathsf{Tr}(f), \overline{\mathsf{Tr}}(f)) \neq (\mathsf{Tr}(f), \mathsf{Tr}(f)) = \Delta(\mathsf{Tr}(f) ) \\  
  (\mathsf{Tr} \times \overline{\mathsf{Tr}})\left( \Delta(f) \right) = (\mathsf{Tr} \times \overline{\mathsf{Tr}})(f,f) = (\mathsf{Tr}(f), \overline{\mathsf{Tr}}(f)) \neq (\mathsf{Tr}(f), \mathsf{Tr}(f)) = \Delta(\mathsf{Tr}(f) )
\end{align*}
So we conclude that $(T, \mu, \eta, m, m_{(I,I)})$ is not a traced monad (or equivalently trace-coherent) with respect to the trace operator $\mathsf{Tr} \times \overline{\mathsf{Tr}}$. Of course, taking instead the trace operator $\mathsf{Tr} \times \mathsf{Tr}$ or $\overline{\mathsf{Tr}} \times \overline{\mathsf{Tr}}$, or if $\mathsf{Tr} = \overline{\mathsf{Tr}}$, then $(T, \mu, \eta, m, m_{(I,I)})$ would be a traced monad. 
\end{example}

Truthfully, the above example is somewhat complex. In fact, finding a symmetric monoidal category with distributive coproducts and two distinct trace operators is not an obvious task either. There are examples with two distinct trace operators, such as the free traced symmetric monoidal category where one of the trace operators is the free one while the other is the one induced from the free symmetric monoidal structure \cite{abramsky2005abstract}, and also the category of continuous lattices where one trace operator is given by the least fixed point operator and the other given by the greatest continuous fixed point operator \cite{simpson1993characterisation}. To obtain a traced symmetric monoidal category with distributive coproducts, one can consider the biproduct completion (of the semi-additive category obtained as the direct image of the powerset functor),
as suggested by Naohiko Hoshino, in private communications with the authors. Given a category $\mathbb{X}$, we define a category $B\mathbb{X}$ whose objects are finite families $\{X_i\}_{i\in I}$ of objects of $\mathbb{X}$ and whose maps $\{X_i\}_{i\in I}\to \{Y_j\}_{j\in J}$ are families $\{f_{i,j}: X_i \to Y_j\}_{(i,j)\in I\times J}$ of maps in $\mathbb{X}$. This new category $B\mathbb{X}$ has finite biproducts where the zero object is given by the empty family, while the biproduct is given by the disjoint union of families. When $\mathbb{X}$ is a traced symmetric monoidal category, $B\mathbb{X}$ is again a traced symmetric monoidal category with tensor unit $\{I\}$ and the tensor product
$\{X_i\}_{i\in I}\otimes \{Y_j\}_{j\in J}=\{X_i\otimes Y_j\}_{(i,j)\in I\times J}$, which this time has distributive biproducts. The biproduct completion is independent of the trace operator, so if one starts with a symmetric monoidal category with two distinct trace operators, its biproduct completion will also have two distinct trace operators. Therefore, using this we obtain a suitable starting traced symmetric monoidal category on which to apply the construction of the above example and obtain a symmetric Hopf monad which is not a traced monad. 

Ideally, it would be of interest to find an example that is not as complicated. A potential example would be to find a non-idempotent symmetric Hopf monad on a coCartesian monoidal category, which can therefore not be trace-coherent by Corollary \ref{tracecoherentcocart}. 

\bibliographystyle{plainnat}
\bibliography{tracereferences}

\begin{thebibliography}{24}
\providecommand{\natexlab}[1]{#1}
\providecommand{\url}[1]{\texttt{#1}}
\expandafter\ifx\csname urlstyle\endcsname\relax
  \providecommand{\doi}[1]{doi: #1}\else
  \providecommand{\doi}{doi: \begingroup \urlstyle{rm}\Url}\fi

\bibitem[Abramsky(2005)]{abramsky2005abstract}
S.~Abramsky.
\newblock Abstract scalars, loops, and free traced and strongly compact closed
  categories.
\newblock In \emph{International Conference on Algebra and Coalgebra in
  Computer Science}, pages 1--29. Springer, 2005.
\newblock \doi{10.1007/11548133_1}.

\bibitem[Abramsky and Coecke(2004)]{abramsky2004categorical}
S.~Abramsky and B.~Coecke.
\newblock A categorical semantics of quantum protocols.
\newblock In \emph{Proceedings of the 19th Annual IEEE Symposium on Logic in
  Computer Science, 2004.}, pages 415--425. IEEE, 2004.
\newblock \doi{10.1109/LICS.2004.1319636}.

\bibitem[Abramsky and Coecke(2005)]{abramsky2009abstract}
S.~Abramsky and B.~Coecke.
\newblock Abstract physical traces.
\newblock \emph{Theory and Applications of Categories}, 14\penalty0
  (6):\penalty0 111--124, 2005.
\newblock \url{http://www.tac.mta.ca/tac/volumes/14/6/14-06abs.html}.

\bibitem[Abramsky et~al.(2002)Abramsky, Haghverdi, and
  Scott]{abramsky2002geometry}
S.~Abramsky, E.~Haghverdi, and P.~Scott.
\newblock Geometry of interaction and linear combinatory algebras.
\newblock \emph{Mathematical Structures in Computer Science}, 12\penalty0
  (5):\penalty0 625--665, 2002.
\newblock \doi{10.1017/S0960129502003730}.

\bibitem[B{\'e}nabou(1967)]{benabou1967introduction}
J.~B{\'e}nabou.
\newblock Introduction to bicategories.
\newblock In \emph{Reports of the Midwest Category Seminar}, pages 1--77.
  Springer, 1967.
\newblock \doi{10.1007/BFb0074299}.

\bibitem[Borceux(1994)]{borceux1994handbook}
F.~Borceux.
\newblock \emph{Handbook of Categorical Algebra: Volume 2, Categories and
  Structures}.
\newblock Cambridge University Press, 1994.
\newblock \doi{10.1017/CBO9780511525865}.

\bibitem[Brugui{\`e}res and Virelizier(2007)]{bruguieres2007hopf}
A.~Brugui{\`e}res and A.~Virelizier.
\newblock Hopf monads.
\newblock \emph{Advances in Mathematics}, 215\penalty0 (2):\penalty0 679--733,
  2007.
\newblock \doi{10.1016/j.aim.2007.04.011}.

\bibitem[Bruguieres et~al.(2011)Bruguieres, Lack, and
  Virelizier]{bruguieres2011hopf}
A.~Bruguieres, S.~Lack, and A.~Virelizier.
\newblock Hopf monads on monoidal categories.
\newblock \emph{Advances in Mathematics}, 227\penalty0 (2):\penalty0 745--800,
  2011.
\newblock \doi{10.1016/j.aim.2011.02.008}.

\bibitem[Haghverdi(2000)]{haghverdi2000unique}
E.~Haghverdi.
\newblock Unique decomposition categories, geometry of interaction and
  combinatory logic.
\newblock \emph{Mathematical Structures in Computer Science}, 10\penalty0
  (2):\penalty0 205--230, 2000.
\newblock \doi{10.1017/S0960129599003035}.

\bibitem[Hasegawa(1997)]{Hasegawa97recursionfrom}
M.~Hasegawa.
\newblock Recursion from cyclic sharing: traced monoidal categories and models
  of cyclic lambda calculi.
\newblock In \emph{Typed Lambda Calculi and Applications}, pages 196--213,
  Berlin, Heidelberg, 1997. Springer.
\newblock \doi{10.1007/3-540-62688-3_37}.

\bibitem[Hasegawa(1999)]{hasegawa2012models}
M.~Hasegawa.
\newblock \emph{Models of Sharing Graphs: A Categorical Semantics of let and
  letrec}.
\newblock Springer, 1999.
\newblock \doi{10.1007/978-1-4471-0865-8}.

\bibitem[Hasegawa(2004)]{hasegawa2004uniformity}
M.~Hasegawa.
\newblock The uniformity principle on traced monoidal categories.
\newblock \emph{Publications of the Research Institute for Mathematical
  Sciences}, 40\penalty0 (3):\penalty0 991--1014, 2004.
\newblock \doi{10.1016/S1571-0661(04)80563-2}.

\bibitem[Hasegawa(2009)]{hasegawa2009traced}
M.~Hasegawa.
\newblock On traced monoidal closed categories.
\newblock \emph{Mathematical Structures in Computer Science}, 19\penalty0
  (2):\penalty0 217--244, 2009.
\newblock \doi{10.1017/S0960129508007184}.

\bibitem[Hasegawa and Lemay(2018)]{hasegawa2018linear}
M.~Hasegawa and J.-S.~P. Lemay.
\newblock Linear distributivity with negation, star-autonomy, and {H}opf
  monads.
\newblock \emph{Theory and Applications of Categories}, 33\penalty0
  (37):\penalty0 1145--1157, 2018.
\newblock \url{http://www.tac.mta.ca/tac/volumes/33/37/33-37abs.html}.

\bibitem[Hasegawa et~al.(2008)Hasegawa, Hofmann, and
  Plotkin]{hasegawa2008finite}
M.~Hasegawa, M.~Hofmann, and G.~Plotkin.
\newblock Finite dimensional vector spaces are complete for traced symmetric
  monoidal categories.
\newblock In \emph{Pillars of computer science}, pages 367--385. Springer,
  2008.
\newblock \doi{10.1007/978-3-540-78127-1_20}.

\bibitem[Joyal et~al.(1996)Joyal, Street, and Verity]{joyal_street_verity_1996}
A.~Joyal, R.~Street, and D.~Verity.
\newblock Traced monoidal categories.
\newblock \emph{Mathematical Proceedings of the Cambridge Philosophical
  Society}, 119\penalty0 (3):\penalty0 447–468, 1996.
\newblock \doi{10.1017/S0305004100074338}.

\bibitem[McCrudden(2002)]{mccrudden2002opmonoidal}
P.~McCrudden.
\newblock Opmonoidal monads.
\newblock \emph{Theory Appl. Categ}, 10\penalty0 (19):\penalty0 469--485, 2002.

\bibitem[Moerdijk(2002)]{moerdijk2002monads}
I.~Moerdijk.
\newblock Monads on tensor categories.
\newblock \emph{Journal of Pure and Applied Algebra}, 168\penalty0
  (2):\penalty0 189--208, 2002.
\newblock \doi{10.1016/S0022-4049(01)00096-2}.

\bibitem[Selinger(1999)]{selinger1999categorical}
P.~Selinger.
\newblock Categorical structure of asynchrony.
\newblock \emph{Electronic Notes in Theoretical Computer Science}, 20:\penalty0
  158--181, 1999.
\newblock \doi{10.1016/S1571-0661(04)80073-2}.

\bibitem[Selinger(2010)]{selinger2010survey}
P.~Selinger.
\newblock A survey of graphical languages for monoidal categories.
\newblock In \emph{New Structures for Physics}, pages 289--355. Springer, 2010.
\newblock \doi{10.1007/978-3-642-12821-9_4}.

\bibitem[Simpson(1993)]{simpson1993characterisation}
A.~Simpson.
\newblock A characterisation of the least-fixed-point operator by dinaturality.
\newblock \emph{Theoretical Computer Science}, 118\penalty0 (2):\penalty0
  301--314, 1993.
\newblock \doi{10.1016/0304-3975(93)90112-7}.

\bibitem[Simpson and Plotkin(2000)]{simpson2000complete}
A.~Simpson and G.~Plotkin.
\newblock Complete axioms for categorical fixed-point operators.
\newblock In \emph{Proceedings Fifteenth Annual IEEE Symposium on Logic in
  Computer Science}, pages 30--41. IEEE, 2000.
\newblock \doi{10.1109/LICS.2000.855753}.

\bibitem[Stefanescu(2000)]{stefanescu2000network}
G.~Stefanescu.
\newblock \emph{Network Algebra}.
\newblock Springer, 2000.
\newblock \doi{10.1007/978-1-4471-0479-7}.

\bibitem[Turaev and Virelizier(2017)]{turaev2017monoidal}
V.~Turaev and A.~Virelizier.
\newblock \emph{Monoidal Categories and Topological Field Theory}, volume 322
  of \emph{Progress in Mathematics}.
\newblock Birkhauser, 2017.
\newblock \doi{10.1007/978-3-319-49834-8}.

\end{thebibliography}

\end{document}